\newtheorem{theorem}{Theorem}[section]
\newtheorem{corollary}[theorem]{Corollary}
\newtheorem*{intro_theorem}{Theorem}
\newtheorem{lemma}[theorem]{Lemma}
\newtheorem*{question}{Question}
\theoremstyle{definition}
\newtheorem{definition}[theorem]{Definition}
\newtheorem*{intro_definition}{Definition}
\newtheorem{example}[theorem]{Example}
\theoremstyle{remark}
\newtheorem{remark}[theorem]{Remark}
\DeclareMathOperator{\spt}{spt}
\DeclareMathOperator{\im}{im}
\DeclareMathOperator{\domain}{dmn}
\DeclareMathOperator{\tangent}{Tan}
\DeclareMathOperator{\homomorphism}{Hom}
\DeclareMathOperator{\divergence}{div}
\DeclareMathOperator{\closure}{Clos}
\DeclareMathOperator{\interior}{Int}
\DeclareMathOperator{\der}{D}           
\DeclareMathOperator{\with}{:} 
\DeclareMathOperator{\without}{\sim} 
\DeclareMathOperator{\dimension}{dim}
\DeclareMathOperator{\restrict}{\llcorner}
\DeclareMathOperator{\lipschitz}{Lip}
\newcommand{\ud}{\ensuremath{\,\mathrm{d}}}
\begin{document}

\title{Young functions on varifolds. Part I. Functional analytic foundations}
\author{Hsin-Chuang Chou}
\maketitle

\begin{abstract}
    The series of papers is devoted to the study of convergence for pairs of surfaces and smooth functions thereon. We model such pairs with varifolds and multiple-valued functions to capture their limits. In the present paper, we study Young functions, a measure-theoretic approach to multiple-valued functions, and the graph measures associated with pairs of measures (in particular, varifolds) and Young functions. This setting allows us to model the convergence of pairs of surfaces and functions thereon via the weak convergence of their associated graph measures, and a compactness theorem follows immediately. As a prerequisite for the concepts of differentiability for Young functions in the upcoming papers, we introduce and investigate several test function spaces.
\end{abstract}

\noindent
\textbf{Keywords:} varifolds, multiple-valued functions, Young measures, Young functions, graph measures

\quad \\
\noindent
\textbf{MSC Classification:} 28A35 (Primary), 46A13, 49Q20, 60B10 (Secondary)

\quad \\
\noindent
\textbf{Acknowledgements.}
The author would like to thank his PhD supervisor, Prof.\ Ulrich Menne, and his PhD co-supervisor, Dr.\ Nicolau Sarquis Aiex, for reading the paper and giving comments and suggestions. Also, the author would like to thank Mr.\ Yu-Tong Liu for consultation on functional analysis. The author was supported by the NTNU ``Scholarship Pilot Program of the Ministry of Science and Technology to Subsidize Colleges and Universities in the Cultivation of Outstanding Doctoral Students''.

\section{Introduction}

This is the first of a series of papers aiming to answer the following questions.
\begin{question}
    In a Euclidean space, consider a sequence of smooth surfaces and smooth functions thereon such that over every compact subset the area of the surfaces and the integrals of mean curvature, the functions themselves, and their derivatives are uniformly bounded. What is the limit of these pairs? What can we say about the smoothness of the limit?
\end{question}
Such questions typically arise when solving geometric variational problems, and those functions carry the geometric information about the surfaces. We shall begin with the first question. Note that varifolds (see Allard \cite{Allard72_MR307015}) are a natural candidate to model the convergence of surfaces to retain information on their mean curvature in the limit process. With this model, we allow surfaces to have multiplicity, and the following example shows that we should also allow functions to have multiple values. Consider two parallel lines approaching each other, and on each of them, the functions have constant values $1$ and $-1$, respectively; in this case, the expected limit should consist of a line of multiplicity $2$ and a constant two-valued function thereon. Almgren \cite{Almgren2000_MR1777737} has developed (later extended by De Lellis and Spadaro \cite{DLS11_MR2663735}) the theory of $Q$-valued functions in Euclidean spaces, where $Q$ is a fixed positive integer, to study the regularity of minimizers of Plateau problems in higher codimensions. However, this theory is not quite sufficient for our setting because, in general, the behavior of varifold convergence is more complicated, the number of values of the limit function varies from one point to another, and the functions can oscillate such that the values of the limit function become diffuse. Therefore, we introduce \textit{Young functions} (later or see \ref{definition: Young function}) to model multiple-valued functions. Consequently, to answer the second question, we should develop the concepts of differentiability for Young functions on varifolds in advance.

The purpose of the present paper is to provide functional analytic foundations for later topics; more precisely, the concept of Young functions and the convergence of pairs of measures and Young functions. We also study some specific test function spaces on which the concepts of differentiability of Young functions rely. The upcoming papers will then contribute to, firstly, the study of differentiability for Young functions and the compatibility of our theory with that of Almgren's $Q$-valued functions; secondly, the compactness properties of pairs of varifolds and ``differentiable'' Young functions, and therefore the answer to the second question.

\textbf{Description of the results.} 
To present the results, we shall first introduce the concept of Young functions.

\begin{intro_definition}[see \ref{definition: Young function}]
    Suppose $X$ and $Y$ are locally compact Hausdorff spaces and $\mu$ is a Radon measure over $X$. By a \textit{$\mu$ Young function $f$ of type $Y$}, we mean a $\mu$ measurable $\mathbf{P}(Y)$-valued function $f$, where $\mathbf{P}(Y)$ denotes the space of probability Radon measures over $Y$ endowed with the initial topology induced from the maps $\nu \mapsto \textstyle\int k \ud \nu$ for $\nu \in \mathbf{P}(Y)$ corresponding to continuous functions $k: Y \to \mathbf{R}$ with compact support.
\end{intro_definition}

Every $\mu$ measurable $Y$-valued function $g$ gives rise to an associated $\mu$ Young function $f$ of type $Y$ such that $f(x) = \boldsymbol{\updelta}_{g(x)}$ for $x \in \domain g$, see \ref{remark: single-valued Young function}; furthermore, we readily check that this statement still holds when $g$ is a Lebesgue measurable $Q$-valued function by taking a measurable selection of $g$, see \ref{example: Almgren's Q-valued function}. Compared with these examples, we emphasize that the values of Young functions can be diffuse; also, the number of values can vary from one point to another. Less trivial examples of Young functions can be obtained from the Young measures generated by a sequence of functions in $\mathbf{L}_\infty(\mathscr{L}^m)$ and from the disintegration of measures on products (for instance, the disintegration of varifolds); the first example also justifies the terminology of Young functions, see \ref{remark: Young measures} and \ref{example: varifold disintegration}.

To extend basic operations of single-valued functions to Young functions, we define typical operations on the space of probability measures, hence on the class of Young functions, such as pushforward, Cartesian product, and convolution, see \ref{definition: pushforward of Young function}, \ref{definition: product of Young functions}, and \ref{definition: convolution of Young functions}, respectively. When the Young functions are associated with $Y$-valued functions, these operations then correspond to the post-composition, join\footnote{Suppose $A$, $B$, and $C$ are sets and $f: A \to B$ and $g: A \to C$ are functions. By the join of $f$ and $g$, we mean the function $A \to B \times C$ given by $a \mapsto (f(a), g(a))$.}, and addition (provided $Y$ is a finite-dimensional Banach space) of $Y$-valued functions. Note that the notion of addition for multiple-valued functions is also new to the theory of $Q$-valued functions, but the addition of two $Q$-valued functions will be a $Q^2$-valued function, see \ref{remark: properties of convolution}\eqref{remark: properties of convolution3}.

To study the convergence of pairs of measures and Young functions, we adapt Hutchinson's graph measures (see \cite[4.3.1]{Hutchinson86_MR825628}) to our setting; in contrast to his concept of measure-function pairs in \cite[4.1.1]{Hutchinson86_MR825628}, we do not require any summability condition on Young functions.

\begin{intro_definition}[see \ref{definition: graph measure}]
    Suppose $X$ and $Y$ are locally compact Hausdorff spaces, $\mu$ is a Radon measure over $X$, and $f$ is a $\mu$ Young function of type $Y$. We define the \textit{graph measure} $\mathbf{Y}(\mu, f)$ associated with $\mu$ and $f$ to be the Radon measure over $X \times Y$ such that
    \[
        \textstyle\int \phi \ud \mathbf{Y}(\mu, f) = \textstyle\int \phi(x, y) \ud f(x) \, y \ud \mu \, x
    \]
    whenever $\phi: X \times Y \to \mathbf{R}$ is a continuous function with compact support.
\end{intro_definition}
When $X$ and $Y$ have countable bases and $f$ is the Young function associated with some $\mu$ measurable $Y$-valued function $g$, the graph measure $\mathbf{Y}(\mu, f)$ agrees with the pushforward of $\mu$ by the map $x \mapsto (x, g(x))$, see \ref{remark: comparison measure-function pair}. The convergence of pairs is then defined to be the convergence of their associated graph measures. For the class of Radon measures $\Gamma$ over $X \times Y$ such that $\Gamma(K \times Y) < \infty$ whenever $K$ is a compact subset of $X$, we study its compactness property and present a disintegration theorem, see \ref{theorem: compactness of graph measures} and \ref{theorem: disintegration}. Consequently, we have the following compactness theorem for pairs of varifolds and Young functions.

\begin{intro_theorem}[see \ref{theorem: compactness theorem for pairs of rectifiable varifolds and Young functions}]
    Suppose $k$, $m$, and $n$ are positive integers, $m \leq n$, $V_i$ is a sequence of $m$-dimensional varifolds in $\mathbf{R}^n$ that converges to $V$, and $f_i$ is a sequence of $\|V_i\|$ Young functions of type $\mathbf{R}^k$ such that
    \[
        \lim_{s \to \infty} \sup \{ \mathbf{Y}(\|V_i\|, f_i)(K \times (\mathbf{R}^k \without \mathbf{B}(0, s))) \with i = 1, 2, \dotsc \} = 0
    \]
    whenever $K$ is a compact subset of $\mathbf{R}^n$. Then, there exists a $\|V\|$ Young function $f$ of type $\mathbf{R}^k$ such that, possibly passing to a subsequence, we have
    \[
        \mathbf{Y}(\|V\|, f) = \lim_{i \to \infty} \mathbf{Y}(\|V_i\|, f_i).
    \]
    Moreover, if we require $V_i$ and $V$ to be rectifiable, then we can further conclude that $\mathbf{Y}(V, f \circ p) = \lim_{i \to \infty} \mathbf{Y}(V_i, f_i \circ p)$, where $p: \mathbf{R}^n \times \mathbf{G}(n, m) \to \mathbf{R}^n$ is the projection map.
\end{intro_theorem}

In preparation for investigating the concepts of differentiability for Young functions, we want to study the following test function spaces.
\begin{intro_definition}[see \ref{definition: function space E}]
    Suppose $Y$ is a finite-dimensional Banach space. Denote by $\mathbf{E}_s(Y)$ the vector space of all continuously differentiable functions $\gamma: Y \to \mathbf{R}$ such that $\gamma(0) = 0$ and $\spt \der \gamma \subset \mathbf{B}(0, s)$. The space $\mathbf{E}_s(Y)$ is endowed with the norm whose value equals $\sup \im \|\der \gamma\|$ at $\gamma \in \mathbf{E}_s(Y)$, and we endow the vector space $\mathbf{E}(Y) = \bigcup_{0 \leq s < \infty} \mathbf{E}_s(Y)$ with the locally convex final topology (see \ref{definition: locally convex final topology}) induced from the inclusions $\mathbf{E}_s(Y) \to \mathbf{E}(Y)$ corresponding to $0 \leq s < \infty$.
\end{intro_definition}
Recall that a function on Euclidean space is termed weakly differentiable if and only if it satisfies an integration-by-parts identity. We may define weak differentiability of $Y$-valued functions $g$ on varifolds in the same way, but as Menne pointed out in \cite[8.27]{Menne16a_MR3528825}, the chain rule fails for the class of such functions. To resolve this problem, we have to require the compositions $\gamma \circ g$ for $\gamma \in \mathbf{E}(Y)$, instead of merely the functions $g$ themselves, to satisfy an integration-by-parts identity, see \cite[8.3]{Menne16a_MR3528825}. When $f$ is the Young function associated with $g$, we have $(\gamma \circ g)(x) = \int \gamma \ud f(x)$ for $x \in \domain g$ and $\gamma \in \mathbf{E}(Y)$. Thus, the previous definition of weak differentiability suggests that it is expedient to view Young functions of type $Y$ as $\mathbf{E}(Y)^*$-valued functions, see \ref{remark: probability measures as dual elements}. With this in mind, we study the topological vector space structure of $\mathbf{E}(Y)$ and give a homeomorphic embedding of $\mathbf{E}(Y)$ into a space of continuous functions with compact support, see \ref{corollary: homeomorphic embedding E}. Moreover, to study the differentiability of Young functions on varifolds via their associated graph measures, we define the locally convex space $\mathbf{H}(U \times Y, \mathbf{R}^n)$ of certain test functions $\eta: U \times Y \to \mathbf{R}^n$, where $U$ is an open subset of $\mathbf{R}^n$; again, a homeomorphic embedding of $\mathbf{H}(U \times Y, \mathbf{R}^n)$ into a space of continuous functions with compact support is established, see \ref{theorem: homeomorphic embedding of H}.

As Lipschitz functions play an important role in the differentiation theory of functions on varifolds (cf. \cite{Menne16a_MR3528825} and \cite{Menne16b_MR3626845}), we expect their analogue for Young functions to play a similar role in our theory. To this end, we use $\mathbf{E}(Y)$ to define a pseudo-metric on $\mathbf{P}(Y)$ such that Dirac measures define an isometric embedding $Y \to \mathbf{P}(Y)$, which leads to a canonical definition of Lipschitz continuity for Young functions, see \ref{definition: pseudo-metric d}, \ref{remark: isometric embedding}, and \ref{definition: Lipschitz continuity Young function}. This pseudo-metric agrees with the $1$-Wasserstein metric on the subspace $\mathbf{P}_1(Y)$ of those $\mu \in \mathbf{P}(Y)$ such that $\int |y| \ud \mu \, y < \infty$, and the metric topology on $\mathbf{P}_1(Y)$ is characterized by the weak convergence of measures $\mu$ and the convergence of their absolute first moments $\int |y| \ud \mu \, y$, see \ref{remark: d agrees with 1-Wasserstein metric} and \ref{lemma: characterization of the metric topology}, respectively.

\textbf{Organization of this paper.}
In Section \ref{section: inductive limit and topological tensor product}, we collect the necessary materials for Section \ref{section: the test function spaces} from functional analysis. 
In Section \ref{section: Young functions and graph measures}, we define the notions of Young functions and their associated graph measures; also, we prove a compactness theorem for graph measures, a disintegration theorem, and therefore a compactness theorem for pairs of varifolds and Young functions.
In Section \ref{section: the space of probability Radon measures and operations on Young functions}, 
we study the space of probability Radon measures and define the operations pushforward, Cartesian product, and convolution on the class of Young functions. Moreover, we introduce Lipschitzian continuity for Young functions.
In Section~\ref{section: the test function spaces}, we study the aforementioned test function spaces $\mathbf{E}(Y)$ and $\mathbf{H}(U \times Y, \mathbf{R}^n)$, for which embedding theorems into spaces of continuous functions with compact support are established. 

\textbf{Terminology.} 
We employ the terms pseudo-metric, symmetric sets, absorbent sets, and locally convex spaces and locally convex topology, in accordance with Bourbaki, see \cite[IX, \S 1, No.\ 1, Definition 1]{Bourbaki_topology_II_MR979295}, \cite[I, \S 4, No.\ 1, p.\ 31]{Bourbaki_algebra_I_MR979982}, \cite[I, \S 1, No.\ 5, Definition 4]{Bourbaki_TVS_MR910295}, \cite[II, \S 4, No.\ 1, Definition 1]{Bourbaki_TVS_MR910295}, respectively. We term a function of class $k$ if and only if it is $k$-times continuously differentiable; moreover, we term a function of class $\infty$ if and only if it is of class $k$ whenever $k$ is a positive integer, see Federer \cite[3.1.11]{Federer69_MR0257325}.

\textbf{Notation.}
The set of positive integers is denoted by $\mathscr{P}$. The set $\mathscr{P}^\mathscr{P}$ consists of all functions $\mathscr{P} \to \mathscr{P}$. If $V$ is a vector space, $v \in V$, and $f \in \homomorphism(V, \mathbf{R})$, then we denote $\langle v, f \rangle = f(v)$. If $f$ is an $\overline{\mathbf{R}}$-valued function defined on a directed set $A$, then its limit and limit superior (see Kelley \cite[p.\ 66]{Kelley75_MR0370454}) are denoted by $\lim_{\alpha \in A} f(\alpha)$ and $\limsup_{\alpha \in A} f(\alpha)$, respectively. If $F$ is the inductive limit of locally convex spaces $F_\alpha$, see \ref{definition: inductive system and inductive limit}, then we denote $F = \varinjlim F_\alpha$. If $U$ is an open subset of a finite-dimensional Banach space and $Y$ is a Banach space, then $\mathscr{E}(U, Y)$ denotes the space of all functions of class $\infty$ from $U$ into $Y$ and $\mathscr{D}(U, Y) = \mathscr{E}(U, Y) \cap \{ f \with \text{$\spt f$ is compact} \}$. The notation of Allard (see \cite{Allard72_MR307015}) will be employed for the discussion of varifolds.

\textbf{Definitions in this paper.}
The notion of \textit{directed sets} is defined in \ref{definition: directed set}. 
The notions of \textit{inductive systems} and their \textit{inductive limits} are defined \ref{definition: inductive system and inductive limit}.
The notions of \textit{locally convex final topology} and \textit{projective tensor product} are defined in \ref{definition: locally convex final topology} and \ref{definition: projective tensor product}, respectively. 
The space $\mathscr{K}(X, Z)$ of continuous functions with compact support and the space $\mathbf{P}(X)$ of probability Radon measures as well as their topologies are defined in \ref{definition: space of continuous functions with compact support} and \ref{definition: space of probability Radon measures}, respectively. 
The notions of \textit{Young functions} and their associated \textit{graph measures} are defined in \ref{definition: Young function} and \ref{definition: graph measure}, respectively. 
The test function space $\mathbf{E}(Y)$ is introduced in \ref{definition: function space E}.
The pseudo-metric $d$ on $\mathbf{P}(X)$ is defined in \ref{definition: pseudo-metric d}.
The notion of \textit{Lipschitzian Young functions} is defined in \ref{definition: Lipschitz continuity Young function}.
The \textit{pushforward} and \textit{product} of Young functions are defined in \ref{definition: pushforward of Young function} and \ref{definition: product of Young functions}, respectively.
The \textit{convolutions of measures} and the \textit{convolution of Young functions} are defined in \ref{definition: convolution of measures} and \ref{definition: convolution of Young functions}, respectively.
The test function spaces $\widetilde{\mathbf{E}}(Y)$ and $\mathbf{H}(U \times Y, \mathbf{R}^n)$ are introduced in \ref{definition: function space widetilde E} and \ref{definition: function space H}, respectively.

\section{Inductive limit and topological tensor product}
\label{section: inductive limit and topological tensor product}

In this section, we study the interaction between inductive limit and topological tensor product in the category of locally convex spaces.


\begin{definition}[see \protect{\cite[p.\ 65]{Kelley75_MR0370454}}]
    \label{definition: directed set}
    We say a set $A$ is \textit{directed by the relation $\preceq$} if and only if $\preceq$ is a reflexive and transitive relation on $A$ such that every subset of $A$ with at most two elements has an upper bound.
\end{definition}


\begin{definition}[see \protect{\cite[III, \S 7, No.\ 5, p.\ 202]{Bourbaki_TheoryOfSet_MR2102219} and \cite[II, \S 4, No.\ 4, Example II]{Bourbaki_TVS_MR910295}}]
    \label{definition: inductive system and inductive limit}
    Suppose $A$ is a set directed by the relation $\preceq$. An \textit{inductive system of sets [resp.\ locally convex spaces] relative to $A$} consists of a family of sets [resp.\ locally convex spaces] $F_\alpha$ for $\alpha \in A$ and a family of maps [resp.\ continuous linear maps] $f_{\beta, \alpha}$ for $\alpha, \beta \in A$ with $\alpha \preceq \beta$ such that $f_{\alpha, \alpha} = \mathbf{1}_{F_\alpha}$ whenever $\alpha \in A$ and such that $f_{\gamma, \beta} \circ f_{\beta, \alpha} = f_{\gamma, \alpha}$ whenever $\alpha, \beta, \gamma \in A$ with $\alpha \preceq \beta \preceq \gamma$. In this case, the \textit{inductive limit} of the inductive system $(F_\alpha, f_{\beta, \alpha})$ consists of a set [resp.\ a locally convex space] $F$ and a family of maps [resp.\ continuous linear maps] $f_\alpha$ such that $f_\beta \circ f_{\beta, \alpha} = f_\alpha$ whenever $\alpha, \beta \in A$ with $\alpha \preceq \beta$, and satisfies the following universal property: whenever $G$ is a set [resp.\ a locally convex space] and $g_\alpha: F_\alpha \to G$ is a map [resp.\ a continuous linear map] such that $g_\beta \circ f_{\beta, \alpha} = g_\alpha$ whenever $\alpha, \beta \in A$ with $\alpha \preceq \beta$, there exists a unique map [resp.\ a unique continuous linear map] $g: F \to G$ such that $g \circ f_\alpha = g_\alpha$ whenever $\alpha \in A$. For convenience, we will say $F$ is the inductive limit of $F_\alpha$ for $\alpha \in A$ if either the maps $f_{\beta, \alpha}$ and $f_\alpha$ do not play a role in the statements or these maps are clear from the context.
\end{definition}


\begin{remark}[see \protect{\cite[III, \S 7, No.\ 5, p.\ 202]{Bourbaki_TheoryOfSet_MR2102219}}]
    \label{remark: inductive limit of sets}
    Suppose $(F_\alpha, f_{\beta, \alpha})$ forms an inductive system of sets relative to $A$. By convention, we identify $F_\alpha$ with its canonical image in the disjoint union $\bigsqcup_{\alpha \in A} F_\alpha$. Define $F$ to be $\bigsqcup_{\alpha \in A} F_\alpha$ modulo the equivalence relation $R$ such that $(x, y) \in R$ for some $x \in F_\alpha$ and $y \in F_\beta$ if and only if there exists $\gamma \in A$ such that $\alpha \preceq \gamma$, $\beta \preceq \gamma$, and $f_{\gamma, \alpha}(x) = f_{\gamma, \beta}(y)$. For $\alpha \in A$, let $f_\alpha$ be the composition of the canonical maps $F_\alpha \to \bigsqcup_{\alpha \in A} F_\alpha \to F$. Then, the set $F$ together with the maps $f_\alpha$ forms the inductive limit of the inductive system $(F_\alpha, f_{\beta, \alpha})$ of sets. 
\end{remark}


\begin{definition}[see \protect{\cite[II, \S 4, No.\ 4, Proposition 5]{Bourbaki_TVS_MR910295}}]
    \label{definition: locally convex final topology}
    Suppose $A$ is a set, $F$ is a vector space, $F_\alpha$ is a locally convex space for $\alpha \in A$, and $f_\alpha: F_\alpha \to F$ is a linear map for $\alpha \in A$. Then, the \textit{locally convex final topology} is the finest locally convex topology such that $f_\alpha$ is continuous whenever $\alpha \in A$. A fundamental system of neighborhoods of $0$ of $F$ is given by the family of all absorbent, convex, symmetric subsets $V$ of $F$ such that $f_\alpha^{-1}[V]$ is a neighborhood of $0$ in $F_\alpha$ whenever $\alpha \in A$.
\end{definition}


\begin{remark}[see \protect{\cite[II, \S 4, No.\ 4, Proposition 5(ii)]{Bourbaki_TVS_MR910295}}]
    \label{remark: universal property locally convex final topology}
    The locally convex final topology satisfies the following universal property: if $G$ is a locally convex space and $L: F \to G$ is a linear map, then $L$ is continuous if and only if $L \circ f_\alpha$ is continuous whenever $\alpha \in A$.
\end{remark}


\begin{remark}[see \protect{\cite[II, \S 4, No.\ 4, Example II]{Bourbaki_TVS_MR910295}}]
    \label{remark: inductive limit of locally convex spaces}
    Suppose $A$ is a directed nonempty set, $(F_\alpha, f_{\beta, \alpha})$ forms an inductive system of locally convex spaces (hence an inductive system of sets) relative to $A$, and $F$ and $f_\alpha$ are as in \ref{remark: inductive limit of sets}. Then, $F$ can be endowed with a canonical vector structure such that each $f_\alpha$ is linear, see \cite[II, \S 6, No.\ 2, p. 286]{Bourbaki_algebra_I_MR979982}; moreover, if we endow $F$ with the locally convex final topology induced by the maps $f_\alpha$, then $F$ together with $f_\alpha$ for $\alpha \in A$ forms the inductive limit of the inductive system $(F_\alpha, f_{\beta, \alpha})$ of locally convex spaces.
\end{remark}


\begin{remark}[see \protect{\cite[III, \S 7, No.\ 6, Corollary 1 and Proposition 7]{Bourbaki_TheoryOfSet_MR2102219}}]
    \label{remark: inductive limit of maps}
    Suppose $A$ is a nonempty set directed by the relation $\preceq$, $(F_\alpha, f_{\beta, \alpha})$ and $(G_\alpha, g_{\beta, \alpha})$ are two inductive systems of sets [resp.\ locally convex spaces] relative to $A$, $(F, f_\alpha)$ and $(G, g_\alpha)$ are their inductive limits, respectively, and the maps [resp.\ continuous linear maps] $h_\alpha: F_\alpha \to G_\alpha$ for $\alpha \in A$ satisfy $h_\beta \circ f_{\beta, \alpha} = g_{\beta, \alpha} \circ h_\alpha$ whenever $\alpha, \beta \in A$ with $\alpha \preceq \beta$. By the universal property of inductive limits, there exists a unique map [resp.\ a unique continuous linear map] $h: F \to G$ such that $h \circ f_\alpha = g_\alpha \circ h_\alpha$ whenever $\alpha \in A$; in this case, $h$ is called the inductive limit of $h_\alpha$ for $\alpha \in A$. Moreover, if each $h_\alpha$ is injective [resp.\ surjective], then $h$ is injective [resp.\ surjective].
\end{remark}


\begin{definition}[see \protect{\cite[2.9]{Menne16a_MR3528825} and \cite[II, \S 4, No.\ 6, p.\ 33]{Bourbaki_TVS_MR910295}}]
    We say an inductive limit of locally convex spaces is \textit{strict} if and only if the continuous linear maps in the corresponding inductive system are homeomorphic embeddings.
\end{definition}


\begin{remark}
    \label{remark: criterion strict inductive limit}
    In the applications in Section \ref{section: the test function spaces}, it will always be the case that $F_\alpha$ is a subspace of $F_\beta$ whenever $\alpha \preceq \beta$, $F = \bigcup_{\alpha \in A} F_\alpha$, and the maps $f_{\beta, \alpha}$ and $f_\alpha$ are inclusion maps. In this case, $F$ is the strict inductive limit of $F_\alpha$ if and only if $F$ is endowed with the locally convex final topology induced by the inclusion maps $f_\alpha$ and each $f_{\beta, \alpha}$ is a homeomorphic embedding.
\end{remark}


\begin{lemma}[see \protect{\cite[II, \S 4, No.\ 6, Proposition 9]{Bourbaki_TVS_MR910295} and \cite[III, \S 1, No.\ 4, Proposition 6]{Bourbaki_TVS_MR910295}}]
    \label{lemma: properties strict inductive limit}
    Suppose $F = \bigcup_{i=1}^\infty F_i$ is the strict inductive limit of a sequence of Banach spaces $F_i$ with $F_i \subset F_{i+1}$ for $i \in \mathscr{P}$. Then, the following two statements hold.
    \begin{enumerate}
        \item 
        \label{lemma: properties strict inductive limit1}
            $F$ is complete and Hausdorff, and the given topology on $F_i$ agrees with the subspace topology induced by $F$; in particular, $F_i$ is a closed subspace of $F$.
        \item 
        \label{lemma: properties strict inductive limit2}
            Every bounded subset (in particular, compact subset) of $F$ is contained in some $F_i$ and bounded there.
    \end{enumerate}
\end{lemma}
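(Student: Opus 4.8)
The plan is to reduce everything to the classical theory of strict inductive limits of a sequence (often called strict $(LF)$-spaces, or here strict $(LB)$-spaces since the steps are Banach), since the excerpt explicitly permits invoking the cited Bourbaki propositions. First I would record the standing setup: $F_i$ is a Banach space, $F_i \subset F_{i+1}$, the inclusion $f_{i+1,i}: F_i \to F_{i+1}$ is a homeomorphic embedding (this is what ``strict'' means), $F = \bigcup_i F_i$, and $F$ carries the locally convex final topology induced by the inclusions $f_i: F_i \to F$. By \ref{remark: inductive limit of locally convex spaces} this $F$ together with the $f_i$ is indeed the inductive limit in the category of locally convex spaces, so it is legitimate to pass freely between the abstract inductive limit and this concrete union-with-final-topology picture.

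For part \eqref{lemma: properties strict inductive limit1}, the key first step is to show that the subspace topology that $F$ induces on each $F_i$ coincides with the original Banach topology of $F_i$. One inclusion is automatic: $f_i$ is continuous by definition of the final topology, so the $F$-subspace topology on $F_i$ is coarser than the Banach topology. For the reverse, I would take a convex, symmetric, open $0$-neighborhood $W$ in $F_i$ and, using that each $f_{j+1,j}$ is a homeomorphic embedding, inductively build convex symmetric $0$-neighborhoods $W_j$ in $F_j$ for $j \geq i$ with $W_i = W$ and $W_j \cap F_{j-1} = W_{j-1}$ (this step is exactly \cite[II, \S 4, No.\ 6, Proposition 9]{Bourbaki_TVS_MR910295}; the embedding hypothesis is what lets one extend a neighborhood of $0$ from $F_{j-1}$ to one on $F_j$ without shrinking it on $F_{j-1}$, using the Hahn--Banach-type separation or a gauge argument). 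Then $V = \bigcup_{j \geq i} W_j$ is absorbent, convex, symmetric in $F$, and $f_j^{-1}[V] \supset W_j$ is a $0$-neighborhood in $F_j$ for each $j$ (for $j < i$ one notes $f_j^{-1}[V] = W_j \cap F_j$ which is still a neighborhood), hence $V$ is open in $F$ by the description of the fundamental system in \ref{definition: locally convex final topology}, and $V \cap F_i = W$. This proves the topologies agree. From this, Hausdorffness of $F$ follows because $F_1$ (hence each $F_i$) is Hausdorff and any two distinct points of $F$ lie in some common $F_i$; closedness of $F_i$ in $F$ and completeness of $F$ then come from the same Bourbaki reference (\cite[III, \S 1, No.\ 4, Proposition 6]{Bourbaki_TVS_MR910295}), using that each $F_i$ is complete and that a Cauchy filter in $F$, once shown to be ``contained'' in some $F_i$, converges there; the closedness of $F_i$ in $F_{i+1}$ (a complete subspace of a Hausdorff space is closed) bootstraps to closedness in $F$.

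For part \eqref{lemma: properties strict inductive limit2}, I would argue by contradiction: suppose $B \subset F$ is bounded but not contained in any $F_i$. Then I can extract a sequence $x_k \in B$ with $x_k \in F_{i_k} \setminus F_{i_k - 1}$ for a strictly increasing sequence $i_k$. Using part \eqref{lemma: properties strict inductive limit1} — so that the Banach topology on each $F_{i_k - 1}$ is the induced one and $F_{i_k-1}$ is closed — together with the Bourbaki extension-of-neighborhoods mechanism again, one constructs a convex symmetric open $0$-neighborhood $V$ in $F$ that avoids all the points $k^{-1} x_k$ (at each stage one uses that $x_k \notin F_{i_k-1}$, which is closed, to choose the next neighborhood $W_{i_k}$ missing $k^{-1}x_k$ while still restricting correctly to $F_{i_k - 1}$). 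Then $k^{-1} x_k \notin V$ for all $k$ means $B$ is not absorbed by $V$, contradicting boundedness. Hence $B \subset F_i$ for some $i$, and since $F_i$ carries its Banach subspace topology by part \eqref{lemma: properties strict inductive limit1}, $B$ is bounded in $F_i$. Compact subsets are bounded, so the parenthetical remark follows. I expect the main obstacle to be the careful inductive construction of the coherent family of neighborhoods $W_j$ (the ``staircase'' argument), where one must simultaneously maintain convexity, symmetry, the compatibility $W_j \cap F_{j-1} = W_{j-1}$, and — in part \eqref{lemma: properties strict inductive limit2} — the avoidance of the prescribed points; this is precisely the technical heart of the strict-$(LF)$-space theory, and I would lean on \cite[II, \S 4, No.\ 6, Proposition 9]{Bourbaki_TVS_MR910295} and \cite[III, \S 1, No.\ 4, Proposition 6]{Bourbaki_TVS_MR910295} to supply it rather than reproducing it in full.
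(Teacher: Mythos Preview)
Your proposal is correct and in fact goes beyond what the paper does: the paper states this lemma with the Bourbaki citations in the heading and provides no proof at all, treating it as a quotable fact. Your sketch of the classical staircase construction behind \cite[II, \S 4, No.\ 6, Proposition 9]{Bourbaki_TVS_MR910295} and \cite[III, \S 1, No.\ 4, Proposition 6]{Bourbaki_TVS_MR910295} is accurate and is exactly the content those citations are pointing to, so the approaches coincide.
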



\begin{lemma}[see \protect{\cite[II, \S 4, No.\ 4, Corollary 2]{Bourbaki_TVS_MR910295}}]
    \label{lemma: double locally convex final topology}
    Suppose $A$ and $L$ are sets, $\{ J_\lambda \with \lambda \in L \}$ is a partition of $A$, $G_\alpha$ is a locally convex space for $\alpha \in A$, $F_\lambda$ is a vector space for $\lambda \in L$, $E$ is a vector space, $g_{\lambda, \alpha}: G_\alpha \to F_\lambda$ is a linear map for $\alpha \in J_\lambda$ and $\lambda \in L$, $h_\lambda : F_\lambda \to E$ is a linear map for $\lambda \in L$, and we endow $F_\lambda$ with the locally convex final topology with respect to $g_{\lambda, \alpha}$ for $\alpha \in J_\lambda$ and $\lambda \in L$.
    
    Then, the locally convex final topology on $E$ induced by $h_\lambda$ for $\lambda \in L$ agrees with the locally convex final topology induced by $h_\lambda \circ g_{\lambda, \alpha}$ for $\alpha \in J_\lambda$ and $\lambda \in L$.
\end{lemma}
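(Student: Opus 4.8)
The plan is to name the two topologies on $E$ under comparison and show each is finer than the other by invoking the universal property \ref{remark: universal property locally convex final topology} twice: once for the spaces $F_\lambda$, which by hypothesis carry the locally convex final topology relative to the maps $g_{\lambda, \alpha}$, and once for $E$. Write $\mathcal{T}_1$ for the locally convex final topology on $E$ induced by $h_\lambda$ for $\lambda \in L$, and $\mathcal{T}_2$ for the locally convex final topology on $E$ induced by $h_\lambda \circ g_{\lambda, \alpha}$ for $\alpha \in J_\lambda$ and $\lambda \in L$; both make sense since each $h_\lambda \circ g_{\lambda, \alpha}$ is linear, the index set $\{(\lambda, \alpha) \with \lambda \in L,\ \alpha \in J_\lambda\}$ being in bijection with $A$ via $\alpha \mapsto (\lambda(\alpha), \alpha)$ because $\{J_\lambda\}$ partitions $A$.

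First I would show $\mathcal{T}_1 \subseteq \mathcal{T}_2$. Since $(E, \mathcal{T}_1)$ is locally convex and each $h_\lambda \colon F_\lambda \to (E, \mathcal{T}_1)$ is continuous by construction of $\mathcal{T}_1$, the composite $h_\lambda \circ g_{\lambda, \alpha} \colon G_\alpha \to (E, \mathcal{T}_1)$ is continuous for every $\alpha \in J_\lambda$ and $\lambda \in L$. As $\mathcal{T}_2$ is the finest locally convex topology on $E$ with this property, it is finer than $\mathcal{T}_1$.

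Conversely, to obtain $\mathcal{T}_2 \subseteq \mathcal{T}_1$, I would check that each $h_\lambda \colon F_\lambda \to (E, \mathcal{T}_2)$ is continuous. Because $F_\lambda$ carries the locally convex final topology relative to the $g_{\lambda, \alpha}$ for $\alpha \in J_\lambda$, the universal property \ref{remark: universal property locally convex final topology} reduces this to the continuity of $h_\lambda \circ g_{\lambda, \alpha} \colon G_\alpha \to (E, \mathcal{T}_2)$ for every $\alpha \in J_\lambda$, which holds by the definition of $\mathcal{T}_2$. Hence $(E, \mathcal{T}_2)$ is a locally convex topology making all the $h_\lambda$ continuous, so it is coarser than $\mathcal{T}_1$. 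Combining the two inclusions yields $\mathcal{T}_1 = \mathcal{T}_2$.

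I do not expect a genuine obstacle here; this is the standard ``transitivity of final topologies'' diagram chase, and the only points requiring mild care are that the partition hypothesis is used solely to identify the composite index set with $A$, and that a block $J_\lambda$ empty is harmless (then the topology on the corresponding $F_\lambda$ is the finest locally convex topology and the arguments above are unaffected). As an alternative I would record the neighborhood-level proof using the explicit description in \ref{definition: locally convex final topology}: a convex, symmetric, absorbent $V \subseteq E$ is a $0$-neighborhood for $\mathcal{T}_1$ iff each $h_\lambda^{-1}[V]$ — again convex, symmetric, absorbent — is a $0$-neighborhood in $F_\lambda$, iff each $(h_\lambda \circ g_{\lambda, \alpha})^{-1}[V] = g_{\lambda, \alpha}^{-1}\bigl[h_\lambda^{-1}[V]\bigr]$ is a $0$-neighborhood in $G_\alpha$, iff $V$ is a $0$-neighborhood for $\mathcal{T}_2$; since such sets form a fundamental system of neighborhoods of $0$ and determine the topology in each case, this gives $\mathcal{T}_1 = \mathcal{T}_2$ directly.
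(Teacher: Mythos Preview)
Your proof is correct. Note, however, that the paper does not supply its own proof of this lemma; it merely cites Bourbaki \cite[II, \S 4, No.\ 4, Corollary 2]{Bourbaki_TVS_MR910295}, so there is no in-paper argument to compare against. Your approach via the universal property \ref{remark: universal property locally convex final topology} is exactly the standard one (and is how Bourbaki proves it), and the alternative neighborhood-level argument you sketch using \ref{definition: locally convex final topology} is equally valid.
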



\begin{definition}[see \protect{\cite[II, \S 4, No.\ 5, Definition 2]{Bourbaki_TVS_MR910295}}]
    Suppose $A$ is a set, $F_\alpha$ is a locally convex space for $\alpha \in A$, and $F$ is the direct sum of $F_\alpha$. Then, the \textit{direct sum topology on $F$} is the locally convex final topology induced by the canonical injections $F_\alpha \to F$.
\end{definition}


\begin{lemma}
    \label{lemma: inductive limit and direct sum}
    Suppose $A$ and $B$ are sets, $F_\alpha^\beta$ and $F_\alpha$ for $(\alpha, \beta) \in A \times B$ are locally convex spaces, $F_\alpha$ is endowed with the locally convex final topology induced by the linear maps $f_\alpha^\beta: F_\alpha^\beta \to F_\alpha$ for $\beta \in B$ whenever $\alpha \in A$. Then, the locally convex final topology $\mathcal{T}$ on $\bigoplus_{\alpha \in A} F_\alpha $ induced by the linear maps $\bigoplus_{\alpha \in A} f_\alpha^\beta$ for $\beta \in B$ agrees with the direct sum topology $\mathcal{T}'$ of $\bigoplus_{\alpha \in A} F_\alpha$.
\end{lemma}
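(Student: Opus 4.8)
The plan is to exhibit both $\mathcal{T}$ and $\mathcal{T}'$ as the locally convex final topology on $E := \bigoplus_{\alpha \in A} F_\alpha$ induced by one and the same family of linear maps, namely the maps $j_\alpha \circ f_\alpha^\beta$ for $(\alpha, \beta) \in A \times B$, where $j_\alpha \colon F_\alpha \to E$ denotes the canonical injection. For $\beta \in B$ let $j_\alpha^\beta \colon F_\alpha^\beta \to \bigoplus_{\alpha' \in A} F_{\alpha'}^\beta$ denote the canonical injection; by definition of the direct sum topology, $\bigoplus_{\alpha' \in A} F_{\alpha'}^\beta$ carries the locally convex final topology induced by the maps $j_\alpha^\beta$ for $\alpha \in A$, and this is the space on which $\bigoplus_{\alpha \in A} f_\alpha^\beta$ is defined. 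The one structural fact to record is the commuting square
\[
    j_\alpha \circ f_\alpha^\beta = \Bigl( \bigoplus_{\alpha' \in A} f_{\alpha'}^\beta \Bigr) \circ j_\alpha^\beta \quad \text{whenever } (\alpha, \beta) \in A \times B,
\]
which is immediate from the definition of the direct sum of a family of linear maps.

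For $\mathcal{T}'$: since each $F_\alpha$ carries the locally convex final topology induced by $f_\alpha^\beta$ for $\beta \in B$, and $\mathcal{T}'$ is by definition the locally convex final topology on $E$ induced by the injections $j_\alpha$ for $\alpha \in A$, I would apply \ref{lemma: double locally convex final topology} with its index set taken to be $A \times B$, partitioned into the sets $\{\alpha\} \times B$ for $\alpha \in A$, with the spaces $G_{(\alpha, \beta)} = F_\alpha^\beta$, the intermediate spaces being the $F_\alpha$, and the maps being the $f_\alpha^\beta$ and the $j_\alpha$. This yields that $\mathcal{T}'$ equals the locally convex final topology on $E$ induced by the maps $j_\alpha \circ f_\alpha^\beta$ for $(\alpha, \beta) \in A \times B$.

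For $\mathcal{T}$: since each $\bigoplus_{\alpha \in A} F_\alpha^\beta$ carries the locally convex final topology induced by the $j_\alpha^\beta$ for $\alpha \in A$, and $\mathcal{T}$ is the locally convex final topology on $E$ induced by the maps $\bigoplus_{\alpha \in A} f_\alpha^\beta$ for $\beta \in B$, I would apply \ref{lemma: double locally convex final topology} again, this time with its index set taken to be $A \times B$, partitioned into the sets $A \times \{\beta\}$ for $\beta \in B$, with $G_{(\alpha, \beta)} = F_\alpha^\beta$, the intermediate spaces being the $\bigoplus_{\alpha \in A} F_\alpha^\beta$, and the maps being the $j_\alpha^\beta$ and the $\bigoplus_{\alpha \in A} f_\alpha^\beta$. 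This yields that $\mathcal{T}$ equals the locally convex final topology on $E$ induced by the maps $(\bigoplus_{\alpha' \in A} f_{\alpha'}^\beta) \circ j_\alpha^\beta$ for $(\alpha, \beta) \in A \times B$; by the commuting square these maps coincide with the $j_\alpha \circ f_\alpha^\beta$, so $\mathcal{T} = \mathcal{T}'$.

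I do not expect a genuine obstacle: the content is entirely the transitivity statement \ref{lemma: double locally convex final topology}, and the work is the bookkeeping of matching its hypotheses against the two partitions of $A \times B$ and checking that the direct sum topology on $\bigoplus_{\alpha \in A} F_\alpha^\beta$ is, as needed, the locally convex final topology induced by its canonical injections, which holds by definition. If one wishes to cover the degenerate cases $A = \varnothing$ or $B = \varnothing$, one only needs that the locally convex final topology induced by an empty family of maps is the finest locally convex topology and that a direct sum of vector spaces each carrying the finest locally convex topology again carries the finest locally convex topology.
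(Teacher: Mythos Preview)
Your proposal is correct and follows essentially the same approach as the paper: both apply \ref{lemma: double locally convex final topology} twice, once with the partition of $A \times B$ into the fibers $\{\alpha\} \times B$ to identify $\mathcal{T}'$, once with the fibers $A \times \{\beta\}$ to identify $\mathcal{T}$, and then invoke the commuting square $j_\alpha \circ f_\alpha^\beta = (\bigoplus_{\alpha'} f_{\alpha'}^\beta) \circ j_\alpha^\beta$ to conclude. Your remark on the degenerate cases $A = \varnothing$ or $B = \varnothing$ is a small extra, not present in the paper.
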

\begin{proof}
    Denote the canonical injections by
    \[
        i_\alpha: F_\alpha \to \bigoplus_{a \in A} F_a, \quad i_\alpha^\beta: F_\alpha^\beta \to \bigoplus_{a \in A} F_a^\beta.
    \]
    In view of \ref{lemma: double locally convex final topology}, $\mathcal{T}$ is the locally convex final topology induced by the linear maps $(\bigoplus_{a \in A} f_a^\beta) \circ i_\alpha^\beta$ for $(\alpha, \beta) \in A \times B$, and $\mathcal{T}'$ is the locally convex final topology induced by the linear maps $i_\alpha \circ f_\alpha^\beta$ for $(\alpha, \beta) \in A \times B$. On the other hand, we have
    \[
        \left( \bigoplus_{a \in A} f_a^\beta \right) \circ i_\alpha^\beta = i_\alpha \circ f_\alpha^\beta
    \]
    whenever $(\alpha, \beta) \in A \times B$. It follows that $\mathcal{T} = \mathcal{T}'$.
\end{proof}


\begin{lemma}[see \protect{\cite[II, \S 4, No.\ 3, Proposition 4]{Bourbaki_TVS_MR910295}}]
    \label{lemma: Cartesian product of locally convex spaces}
    Suppose $A$ is a set, $F_\alpha$ is a locally convex space for $\alpha \in A$, $F$ is a vector space, and $f_\alpha: F \to F_ \alpha$ is a linear map for $\alpha \in A$. Then, the initial topology on $F$ with respect to $f_\alpha$ for $\alpha \in A$ is a locally convex topology; in particular, the Cartesian product of locally convex spaces endowed with the product topology is again a locally convex space.
\end{lemma}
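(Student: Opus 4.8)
The plan is to verify directly that the initial topology on $F$ admits a fundamental system of neighborhoods of $0$ consisting of convex, symmetric, and absorbent sets and that it is a vector topology; by the definition of a locally convex topology (cf.\ the analogous list of properties in \ref{definition: locally convex final topology}) this is exactly what has to be shown, and the Cartesian product statement will then follow because the product topology is the initial topology with respect to the coordinate projections.

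First I would record a neighborhood base. Since each $F_\alpha$ is locally convex, choose for every $\alpha \in A$ a fundamental system $\mathcal{B}_\alpha$ of neighborhoods of $0$ in $F_\alpha$ consisting of convex, symmetric, absorbent sets. By the definition of the initial topology induced by the family $f_\alpha$ for $\alpha \in A$, the collection of all finite intersections
\[
\bigcap_{i=1}^n f_{\alpha_i}^{-1}[V_i], \qquad n \in \mathscr{P},\ \alpha_1, \dotsc, \alpha_n \in A,\ V_i \in \mathcal{B}_{\alpha_i},
\]
is a fundamental system of neighborhoods of $0$ in $F$.

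Next I would check the geometry. As each $f_{\alpha_i}$ is linear, the preimage $f_{\alpha_i}^{-1}[V_i]$ is convex, symmetric, and absorbent whenever $V_i$ is: convexity and symmetry of a set are preserved under preimages by linear maps, and if $V_i$ absorbs $f_{\alpha_i}(x)$ then $f_{\alpha_i}^{-1}[V_i]$ absorbs $x$. Finite intersections preserve all three properties, so every set in the base above is convex, symmetric, and absorbent. Then I would confirm that the initial topology is a vector topology: for each $\alpha$ the maps $(x, y) \mapsto f_\alpha(x + y)$ and $(t, x) \mapsto f_\alpha(t x)$ are continuous on $F \times F$ and $\mathbf{R} \times F$ respectively, being composites of continuous maps landing in the topological vector space $F_\alpha$, so applying the universal property of the initial topology to these families yields continuity of addition and scalar multiplication on $F$. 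Combined with the previous step this shows the initial topology is locally convex. Finally, the ``in particular'' clause is immediate, since the product topology on $\prod_{\alpha \in A} F_\alpha$ is by construction the initial topology with respect to the linear coordinate projections, so the preceding argument applies verbatim; alternatively one simply invokes the cited Bourbaki proposition.

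I do not expect a genuine obstacle here. The only point meriting attention is that the phrase ``locally convex topology'' presupposes a vector topology, so the compatibility of the initial topology with the linear structure should not be glossed over; but this is a routine transfer along the universal property of the initial topology.
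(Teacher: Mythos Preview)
Your argument is correct and is the standard verification. The paper itself does not supply a proof for this lemma; it is stated with a citation to Bourbaki \cite[II, \S 4, No.\ 3, Proposition 4]{Bourbaki_TVS_MR910295} and no proof environment follows, so there is nothing to compare against beyond noting that your direct check is exactly the content of the cited proposition.
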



\begin{remark}
    \label{remark: universal property product locally convex space}
    The following universal property of the Cartesian product of locally convex spaces follows from the corresponding property of topological spaces. Suppose $A$ is a set, $G$ and $F_\alpha$ for $\alpha \in A$ are locally convex spaces, $F$ is the Cartesian product of $F_\alpha$ for $\alpha \in A$, $f_\alpha$ is the projection map for each $\alpha \in A$, and $g_\alpha: G \to F_\alpha$ is a continuous linear map for each $\alpha \in A$. Then, there exists a unique continuous linear map $h: G \to F$ such that $f_\alpha \circ h = g_\alpha$ whenever $\alpha \in A$.
\end{remark}


\begin{lemma}[see \protect{\cite[II, \S 4, No.\ 5, Proposition 7]{Bourbaki_TVS_MR910295}}]
    \label{lemma: sum = product for finitely many copies}
    Suppose $A$ is a set, $F_\alpha$ is a locally convex space for $\alpha \in A$. Then, the canonical injection 
    \[
        \bigoplus_{\alpha \in A} F_\alpha \to \prod_{\alpha \in A} F_\alpha
    \]
    is continuous. If $A$ is finite, then this map is an isomorphism of locally convex spaces.
\end{lemma}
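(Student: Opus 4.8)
The plan is to treat the two assertions separately, invoking in each case the universal property of one of the two topologies.

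For the continuity of the canonical injection $j \colon \bigoplus_{\alpha \in A} F_\alpha \to \prod_{\alpha \in A} F_\alpha$, I would use that the direct sum topology on $\bigoplus_{\alpha \in A} F_\alpha$ is, by definition, the locally convex final topology induced by the canonical injections $i_\alpha \colon F_\alpha \to \bigoplus_{\alpha \in A} F_\alpha$; hence, by \ref{remark: universal property locally convex final topology}, it suffices to check that $j \circ i_\alpha$ is continuous for every $\alpha \in A$. Since the product topology on $\prod_{\alpha \in A} F_\alpha$ is the initial topology with respect to the projections $p_\beta$, and is locally convex by \ref{lemma: Cartesian product of locally convex spaces}, the universal property recorded in \ref{remark: universal property product locally convex space} reduces this further to the continuity of $p_\beta \circ j \circ i_\alpha$ for every $\beta \in A$; but this composition equals $\mathbf{1}_{F_\alpha}$ if $\beta = \alpha$ and is the zero map otherwise, so it is continuous. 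Therefore $j$ is continuous.

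For the finite case, I would first note that when $A$ is finite the underlying vector spaces of $\bigoplus_{\alpha \in A} F_\alpha$ and $\prod_{\alpha \in A} F_\alpha$ coincide, so that $j$ is a linear bijection; it then remains to show that $j^{-1}$ is continuous, whence $j$ is an isomorphism of locally convex spaces. The key observation is the identity $j^{-1} = \sum_{\alpha \in A} i_\alpha \circ p_\alpha$, valid because $\sum_{\alpha \in A} i_\alpha(p_\alpha(x)) = x$ for every $x \in \prod_{\alpha \in A} F_\alpha$. Each $p_\alpha$ is continuous by the definition of the product topology, each $i_\alpha$ is continuous by the definition of the direct sum topology, and a finite sum of continuous linear maps into a topological vector space is continuous since addition is continuous; hence $j^{-1}$ is continuous.

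I do not expect a serious obstacle here; the only points requiring a little care are keeping straight which universal property applies to which topology — the final-topology characterization of maps out of $\bigoplus_{\alpha \in A} F_\alpha$ versus the initial-topology characterization of maps into $\prod_{\alpha \in A} F_\alpha$ — and the realization that, in the finite case, continuity of $j^{-1}$ is most efficiently obtained from continuity of addition rather than by producing an explicit neighborhood basis of $0$ in the direct sum topology. One should also remember to record that a continuous linear bijection with continuous inverse is an isomorphism of locally convex spaces, which is immediate from the definitions.
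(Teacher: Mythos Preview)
Your argument is correct and is the standard proof. The paper itself gives no proof for this lemma, simply citing Bourbaki \cite[II, \S 4, No.\ 5, Proposition 7]{Bourbaki_TVS_MR910295}; your write-up is precisely the kind of verification one finds there, so there is nothing further to compare.
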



\begin{theorem}
    \label{theorem: inductive limit and finite product}
    Suppose $(F, f_\alpha)$ is the inductive limit [resp.\ the strict inductive limit] of the inductive system $(F_\alpha, f_{\beta, \alpha})$ of locally convex spaces. Then, $(F^n, \prod_{i = 1}^n f_\alpha)$ is the inductive limit [resp.\ the strict inductive limit] of the inductive system $((F_\alpha)^n, \prod_{i = 1}^n f_{\beta, \alpha})$ of locally convex spaces.
\end{theorem}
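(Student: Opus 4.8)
The plan is to verify that $F^{n}$, with the product topology and together with the maps $\prod_{i=1}^{n} f_{\alpha}$, satisfies the defining properties of the inductive limit of $\bigl((F_{\alpha})^{n}, \prod_{i=1}^{n} f_{\beta,\alpha}\bigr)$ spelled out in \ref{definition: inductive system and inductive limit}; the ``strict'' clause will then be an elementary addendum. I would first dispose of the routine preliminaries: that $\bigl((F_{\alpha})^{n}, \prod_{i=1}^{n} f_{\beta,\alpha}\bigr)$ is again an inductive system (check $\prod_{i=1}^{n} f_{\alpha,\alpha} = \mathbf{1}_{(F_{\alpha})^{n}}$ and $\prod_{i=1}^{n} f_{\gamma,\beta} \circ \prod_{i=1}^{n} f_{\beta,\alpha} = \prod_{i=1}^{n} f_{\gamma,\alpha}$ componentwise), that each $\prod_{i=1}^{n} f_{\alpha} : (F_{\alpha})^{n} \to F^{n}$ is continuous and linear by \ref{lemma: Cartesian product of locally convex spaces}, and that $\bigl(\prod_{i=1}^{n} f_{\beta}\bigr) \circ \bigl(\prod_{i=1}^{n} f_{\beta,\alpha}\bigr) = \prod_{i=1}^{n} f_{\alpha}$ whenever $\alpha \preceq \beta$.

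The core step is the universal property. I would work with the canonical injections $j_{k}^{\alpha} : F_{\alpha} \to (F_{\alpha})^{n}$, $j_{k} : F \to F^{n}$ and projections $p_{k}^{\alpha}$, $p_{k}$, all continuous and linear, which satisfy $\sum_{k=1}^{n} j_{k}^{\alpha} \circ p_{k}^{\alpha} = \mathbf{1}_{(F_{\alpha})^{n}}$, $\sum_{k=1}^{n} j_{k} \circ p_{k} = \mathbf{1}_{F^{n}}$, $\bigl(\prod_{i=1}^{n} f_{\alpha}\bigr) \circ j_{k}^{\alpha} = j_{k} \circ f_{\alpha}$, and $p_{k} \circ \bigl(\prod_{i=1}^{n} f_{\alpha}\bigr) = f_{\alpha} \circ p_{k}^{\alpha}$. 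These let me reduce to the one-factor case: given a locally convex space $G$ and continuous linear $g_{\alpha} : (F_{\alpha})^{n} \to G$ with $g_{\beta} \circ \bigl(\prod_{i=1}^{n} f_{\beta,\alpha}\bigr) = g_{\alpha}$ for $\alpha \preceq \beta$, one checks (via the injection identity above) that for each fixed $k$ the family $\bigl(g_{\alpha} \circ j_{k}^{\alpha}\bigr)_{\alpha \in A}$ is compatible with $(F_{\alpha}, f_{\beta,\alpha})$, so the universal property of $(F, f_{\alpha})$ yields a unique continuous linear $g^{k} : F \to G$ with $g^{k} \circ f_{\alpha} = g_{\alpha} \circ j_{k}^{\alpha}$. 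Then $g := \sum_{k=1}^{n} g^{k} \circ p_{k} : F^{n} \to G$ is the desired map: the identity $g \circ \bigl(\prod_{i=1}^{n} f_{\alpha}\bigr) = g_{\alpha}$ is a direct computation with the relations above, and uniqueness of $g$ follows by restricting any competitor along the $j_{k}$ and invoking uniqueness in the universal property of $F$, using $\sum_{k} j_{k} \circ p_{k} = \mathbf{1}_{F^{n}}$. (Alternatively, this step can be routed through \ref{lemma: sum = product for finitely many copies}, \ref{lemma: inductive limit and direct sum}, and \ref{remark: inductive limit of locally convex spaces}, identifying $F^{n}$ with $\bigoplus_{i=1}^{n} F$ so that the product topology becomes the direct-sum topology, which \ref{lemma: inductive limit and direct sum} recognizes as a locally convex final topology; the one subtlety there is the set-level identification of the inductive limit of the spaces $(F_{\alpha})^{n}$ with $F^{n}$.)

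For the strict case, assume in addition that each $f_{\beta,\alpha}$ is a homeomorphic embedding. Then each $\prod_{i=1}^{n} f_{\beta,\alpha}$ is a homeomorphic embedding: it is injective with image $\bigl(f_{\beta,\alpha}[F_{\alpha}]\bigr)^{n} \subset (F_{\beta})^{n}$, and on that image the subspace topology inherited from $(F_{\beta})^{n}$ coincides with the product of the subspace topologies inherited from $F_{\beta}$, so $\prod_{i=1}^{n} f_{\beta,\alpha}$ is a homeomorphism onto its image. Since being ``strict'' is exactly this property of the inductive system, the second assertion follows from the first. The only mildly delicate point in the whole argument is the injection/projection bookkeeping in the universal-property step — chiefly the compatibility $\bigl(g_{\beta} \circ j_{k}^{\beta}\bigr) \circ f_{\beta,\alpha} = g_{\alpha} \circ j_{k}^{\alpha}$ and the uniqueness of $g$ — everything else being purely formal.
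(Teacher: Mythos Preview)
Your proof is correct. Your primary route is a direct verification of the universal property via injection/projection bookkeeping, reducing the $n$-fold case to $n$ applications of the one-factor universal property; the paper instead invokes the general characterization in \ref{remark: inductive limit of locally convex spaces} (inductive limit $=$ set-level inductive limit equipped with the locally convex final topology), observes that $F^n$ is the set-level inductive limit of the $(F_\alpha)^n$, and then identifies the product topology on $F^n$ with the required locally convex final topology via \ref{lemma: sum = product for finitely many copies} and \ref{lemma: inductive limit and direct sum}. You in fact mention this route in your parenthetical alternative, correctly flagging the one nontrivial ingredient (the set-level identification). Your direct argument is more self-contained and avoids the direct-sum machinery; the paper's argument is shorter once that machinery is in place and makes the ``locally convex final topology'' viewpoint explicit, which is the form used later (e.g.\ in \ref{theorem: tensor product of inductive limit}). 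The strict clause is handled identically in both.
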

\begin{proof}
    By \ref{lemma: Cartesian product of locally convex spaces} and \ref{remark: universal property product locally convex space}, we readily check that $((F_\alpha)^n, \prod_{i = 1}^n f_{\beta, \alpha})$ form an inductive system of locally convex spaces. Observe that $(F^n, \prod_{i=1}^n f_\alpha)$ is the inductive limit of $((F_\alpha)^n, \prod_{i = 1}^n f_{\beta, \alpha})$ in the category of sets. In view of \ref{remark: inductive limit of locally convex spaces}, it is enough to show that $F^n$ carries the locally convex final topology induced by the maps $\prod_{i = 1}^n f_\alpha$ which is immediate by \ref{lemma: sum = product for finitely many copies} and \ref{lemma: inductive limit and direct sum}. To show the strictness of the inductive limit, we note that the Cartesian product of homeomorphic embeddings is again a homeomorphic embedding.
\end{proof}


\begin{definition}[see \protect{\cite[III, 6.1 -- 6.3]{SW99_MR1741419}}]
    \label{definition: projective tensor product}
    Suppose $V$ and $W$ are locally convex spaces. The \textit{projective tensor product topology} of $V \otimes W$ is the finest locally convex topology on $V \otimes W$ such that the natural bilinear map $\mu: V \times W \to V \otimes W$ is continuous.
    
    Furthermore, if $V$ and $W$ are normed, then $V \otimes W$ is normed by the \textit{projective tensor product norm} with value
    \[
        \inf \left\{ \sum_{i=1}^n |v_i||w_i| \with \xi = \sum_{i=1}^n v_i \otimes w_i, v_i \in V, w_i \in W, i = 1, \dotsc, n \right\}
    \]
    at $\xi \in V \otimes W$.
\end{definition}


\begin{remark}
    \label{remark: projective tensor product universal property}
    The projective tensor product topology satisfies the following property: whenever $Z$ is a locally convex space and $g: V \times W \to Z$ is a continuous bilinear map, there exists a unique continuous linear map $h: V \otimes W \to Z$ such that $h = g \circ \mu$. In case that $V$, $W$ and $Z$ are normed, we have $\|g\| = \|h\|$.
\end{remark}


\begin{remark}
    \label{remark: tensor product continuous functions}
    Suppose $f_1: V_1 \to W_1$ and $f_2: V_2 \to W_2$ are continuous linear maps between locally convex spaces. Then, their tensor product $f_1 \otimes f_2: V_1 \otimes V_2 \to W_1 \otimes W_2$ is continuous with respect to the projective tensor product topologies.
\end{remark}


\begin{remark}
    \label{remark: tensor isomorphism}
    Suppose $V$, $W$, and $Z$ are normed spaces. It is straightforward to verify the following isometries of normed spaces
    \[
        (V \otimes W) \otimes Z \simeq V \otimes (W \otimes Z), \quad V \otimes W \simeq W \otimes V.
    \]
    If $\dimension V < \infty$, we have the following isomorphism of normed spaces
    \[
        \homomorphism(V, \mathbf{R}) \otimes W \simeq \homomorphism(V, W);
    \]
    however, in general this is not an isometry.
\end{remark}


\begin{remark}
    \label{remark: homeomorphism inductive tensor product}
    Associating with a locally convex space $V$, the following isomorphism of locally convex spaces
    \[
        I_V: \mathbf{R}^n \otimes V \simeq V^n.
    \]
    yields a natural transformation; that is, whenever $W$ is locally convex space and $f: V \to W$ is a continuous linear map, we have $I_W \circ (\mathbf{1}_{\mathbf{R}^n} \otimes f) = (\prod_{i = 1}^n f) \circ I_V$.
\end{remark}


\begin{theorem}
    \label{theorem: tensor product of inductive limit}
    Suppose $(F, f_\alpha)$ is the inductive limit [resp.\ the strict inductive limit] of the inductive system $(F_\alpha, f_{\beta, \alpha})$ of locally convex spaces. Then, $(\mathbf{R}^n \otimes F, \mathbf{1}_{\mathbf{R}^n} \otimes f_\alpha)$ is the inductive limit [resp.\ the strict inductive limit] of the inductive system $(\mathbf{R}^n \otimes F_\alpha, \mathbf{1}_{\mathbf{R}^n} \otimes f_{\beta, \alpha})$ of locally convex spaces.
\end{theorem}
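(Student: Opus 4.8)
The plan is to deduce this from Theorem~\ref{theorem: inductive limit and finite product} by transporting the inductive limit along the natural isomorphism $I_V : \mathbf{R}^n \otimes V \simeq V^n$ of Remark~\ref{remark: homeomorphism inductive tensor product}.

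First I would record that $(\mathbf{R}^n \otimes F_\alpha, \mathbf{1}_{\mathbf{R}^n} \otimes f_{\beta, \alpha})$ is an inductive system of locally convex spaces: continuity of the transition maps is Remark~\ref{remark: tensor product continuous functions}, while the identities $\mathbf{1}_{\mathbf{R}^n} \otimes f_{\alpha, \alpha} = \mathbf{1}_{\mathbf{R}^n \otimes F_\alpha}$ and $(\mathbf{1}_{\mathbf{R}^n} \otimes f_{\gamma, \beta}) \circ (\mathbf{1}_{\mathbf{R}^n} \otimes f_{\beta, \alpha}) = \mathbf{1}_{\mathbf{R}^n} \otimes f_{\gamma, \alpha}$ express the functoriality of $\mathbf{1}_{\mathbf{R}^n} \otimes (-)$. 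By Remark~\ref{remark: homeomorphism inductive tensor product}, each $I_{F_\alpha} : \mathbf{R}^n \otimes F_\alpha \to (F_\alpha)^n$ is an isomorphism of locally convex spaces and
\[
    I_{F_\beta} \circ (\mathbf{1}_{\mathbf{R}^n} \otimes f_{\beta, \alpha}) = \bigl( \textstyle\prod_{i=1}^n f_{\beta, \alpha} \bigr) \circ I_{F_\alpha}, \qquad I_F \circ (\mathbf{1}_{\mathbf{R}^n} \otimes f_\alpha) = \bigl( \textstyle\prod_{i=1}^n f_\alpha \bigr) \circ I_{F_\alpha}
\]
whenever $\alpha \preceq \beta$, respectively $\alpha \in A$. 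Hence $(I_{F_\alpha})_{\alpha \in A}$ is an isomorphism of inductive systems from $(\mathbf{R}^n \otimes F_\alpha, \mathbf{1}_{\mathbf{R}^n} \otimes f_{\beta, \alpha})$ onto $((F_\alpha)^n, \prod_{i=1}^n f_{\beta, \alpha})$ which intertwines the cone $(\mathbf{1}_{\mathbf{R}^n} \otimes f_\alpha)$ with the cone $(\prod_{i=1}^n f_\alpha)$ through $I_F$.

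Next I would verify the universal property for $(\mathbf{R}^n \otimes F, \mathbf{1}_{\mathbf{R}^n} \otimes f_\alpha)$ directly. Given a locally convex space $G$ and continuous linear maps $g_\alpha : \mathbf{R}^n \otimes F_\alpha \to G$ with $g_\beta \circ (\mathbf{1}_{\mathbf{R}^n} \otimes f_{\beta, \alpha}) = g_\alpha$ for $\alpha \preceq \beta$, the maps $g_\alpha \circ I_{F_\alpha}^{-1} : (F_\alpha)^n \to G$ are then compatible with $\prod_{i=1}^n f_{\beta, \alpha}$, so by Theorem~\ref{theorem: inductive limit and finite product}, applied to the inductive limit $(F^n, \prod_{i=1}^n f_\alpha)$, there is a unique continuous linear map $h : F^n \to G$ with $h \circ (\prod_{i=1}^n f_\alpha) = g_\alpha \circ I_{F_\alpha}^{-1}$; one checks that $h \circ I_F$ has the required property and that it is unique, conjugating any competing map by $I_F^{-1}$ and invoking the uniqueness of $h$. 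This settles the non-strict assertion. For the strict case, since the $I_{F_\alpha}$ are isomorphisms of locally convex spaces satisfying the first identity above, $\mathbf{1}_{\mathbf{R}^n} \otimes f_{\beta, \alpha}$ is a homeomorphic embedding if and only if $\prod_{i=1}^n f_{\beta, \alpha}$ is, and the latter holds whenever each $f_{\beta, \alpha}$ is a homeomorphic embedding because a Cartesian product of homeomorphic embeddings is again one, as already used in the proof of Theorem~\ref{theorem: inductive limit and finite product}. I do not expect a genuine obstacle here; the substance is Theorem~\ref{theorem: inductive limit and finite product} together with the naturality of $I_V$, and the only delicate point is transporting the universal property across the isomorphisms in the correct direction.
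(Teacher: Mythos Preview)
Your proposal is correct and follows essentially the same approach as the paper, which simply writes ``Combine \ref{theorem: inductive limit and finite product} and \ref{remark: homeomorphism inductive tensor product}.'' You have merely spelled out in detail the transport of the universal property along the natural isomorphisms $I_V$, which is exactly what that one-line proof intends.
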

\begin{proof}
    Combine \ref{theorem: inductive limit and finite product} and \ref{remark: homeomorphism inductive tensor product}.
\end{proof}

\section{Young functions and graph measures}
\label{section: Young functions and graph measures}

In this section, we introduce Young functions and graph measures, and we will focus on the compactness property of graph measures, leaving the discussion about Young functions to the next section.

We first collect necessary materials about the test function space $\mathscr{K}(X, Z)$ for later discussions on Young functions and graph measures.

\begin{definition}[see \protect{\cite[III, \S 1, No.\ 1]{Bourbaki_integration_MR2018901}}]
    \label{definition: space of continuous functions with compact support}
    Suppose $X$ is a locally compact Hausdorff space and $Z$ is a locally convex space. Then, the space $\mathscr{K}_K(X, Z)$ of continuous functions from $X$ into $Z$ with compact support in $K$ is endowed with the supremum norm and the space
    \[
        \mathscr{K}(X, Z) =  \bigcup \{ \mathscr{K}_K(X, Z) \with \text{$K$ is a compact subset of $X$} \}
    \]
    is endowed with the locally convex final topology with respect to the inclusion maps $\mathscr{K}_K(X, Z) \to \mathscr{K}(X, Z)$ for compact subsets $K$ of $X$. In case $Z = \mathbf{R}$, we will simply write $\mathscr{K}_K(X)$ and $\mathscr{K}(X)$; also, we denote $\mathscr{K}(X)^+ = \mathscr{K}(X) \cap \{ f \with f \geq 0 \}$.
\end{definition}


\begin{remark}
    The topological dual space $\mathscr{K}(X)^*$ equals the space of all Daniell integrals on $\mathscr{K}(X)$, see \cite[2.5.13]{Federer69_MR0257325}, and it is endowed with the weak topology; that is, the initial topology induced from the maps $\mu \mapsto \mu(f)$ corresponding to $f \in \mathscr{K}(X)$, see \cite[2.5.19]{Federer69_MR0257325}. For $\mu \in \mathscr{K}(X)^*$ such that $\mu(f) \geq 0$ whenever $f \in \mathscr{K}(X)^+$, there exists a unique Radon measure over $X$ representing $\mu$, see \cite[2.5.13, 2.5.14]{Federer69_MR0257325}.
\end{remark}


\begin{lemma}
    \label{lemma: exponential law}
    Suppose $X$ and $Y$ are locally compact Hausdorff spaces, $K$ and $L$ are compact subsets of $X$ and $Y$, respectively, $Z$ is a Banach space. and the map
    \[
        \iota: Z^{X \times Y} \to (Z^Y)^X
    \]
    is characterized by
    \[
        \iota(\eta)(x) = \eta(x, \cdot) \quad \text{whenever $x \in X$ and $\eta \in Z^{X \times Y}$},
    \]
    where $B^A$ is the set of all functions $f: A \to B$. Then, the restriction 
    \[
        \iota | \mathscr{K}_{K \times L}(X \times Y, Z): \mathscr{K}_{K \times L}(X \times Y, Z) \to \mathscr{K}_K(X, \mathscr{K}_L(Y, Z))
    \]
    defines a norm-preserving isomorphism of Banach spaces.
\end{lemma}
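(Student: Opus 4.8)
The plan is to check directly that the displayed restriction is a well-defined linear bijection which preserves the supremum norms; since both sides are Banach spaces --- uniform limits of continuous $Z$-valued functions supported in a fixed compact set are again of that form, and $Z$ is complete --- this yields the asserted isomorphism of Banach spaces. Linearity is immediate from the pointwise formula defining $\iota$, and injectivity is trivial, as $\iota(\eta) = 0$ forces $\eta(x, y) = 0$ for all $(x, y) \in X \times Y$. The norm identity is the elementary observation that, for $\eta \in \mathscr{K}_{K \times L}(X \times Y, Z)$,
\[
    \| \iota(\eta) \| = \sup_{x \in X} \| \eta(x, \cdot) \| = \sup_{x \in X} \sup_{y \in Y} | \eta(x,y) | = \sup_{(x,y) \in X \times Y} | \eta(x,y) | = \| \eta \|,
\]
so everything hinges on the claims that $\iota(\eta) \in \mathscr{K}_K(X, \mathscr{K}_L(Y, Z))$ and that every element of $\mathscr{K}_K(X, \mathscr{K}_L(Y, Z))$ arises this way.

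For the first claim, fix $\eta \in \mathscr{K}_{K \times L}(X \times Y, Z)$. For each $x \in X$ the function $\eta(x, \cdot)$ is continuous on $Y$ (it is the composite of $\eta$ with $y \mapsto (x, y)$) and vanishes off $L$, so $\iota(\eta)(x) \in \mathscr{K}_L(Y, Z)$; moreover $\iota(\eta)(x) = 0$ whenever $x \notin K$, so the support of $\iota(\eta)$ lies in $K$. The only substantive point is continuity of $\iota(\eta) \colon X \to \mathscr{K}_L(Y, Z)$. Here I would fix $x_0 \in X$ and $\varepsilon > 0$; for each $y \in L$, continuity of $\eta$ at $(x_0, y)$ provides open neighborhoods $U_y$ of $x_0$ and $V_y$ of $y$ with $| \eta(x, y') - \eta(x_0, y) | < \varepsilon/2$ for $(x, y') \in U_y \times V_y$, whence the triangle inequality gives $| \eta(x, y') - \eta(x_0, y') | < \varepsilon$ for such $(x, y')$. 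Covering the compact set $L$ by finitely many $V_{y_1}, \dots, V_{y_N}$ and setting $U = \bigcap_{i=1}^N U_{y_i}$, one finds that for $x \in U$ every $y' \in Y$ either lies in some $V_{y_i}$, where the estimate applies, or lies off $L$, where $\eta(x, y') = \eta(x_0, y') = 0$; hence $\| \iota(\eta)(x) - \iota(\eta)(x_0) \| \leq \varepsilon$. I expect this finite-subcover step --- a uniform-continuity argument along the compact fiber direction --- to be the main obstacle and the only place where compactness of $L$ is genuinely used.

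For surjectivity, given $g \in \mathscr{K}_K(X, \mathscr{K}_L(Y, Z))$ I would define $\eta(x, y) = g(x)(y)$, so that $\iota(\eta) = g$ by construction, and verify $\eta \in \mathscr{K}_{K \times L}(X \times Y, Z)$. The support condition is immediate, since $g(x) = 0$ for $x \notin K$ and $g(x)(y) = 0$ for $y \notin L$. Continuity of $\eta$ at $(x_0, y_0)$ follows from the splitting
\[
    | \eta(x,y) - \eta(x_0, y_0) | \leq \| g(x) - g(x_0) \| + | g(x_0)(y) - g(x_0)(y_0) |,
\]
in which the first term is small for $x$ near $x_0$ by continuity of $g$ and the second is small for $y$ near $y_0$ because $g(x_0)$ is a continuous $Z$-valued function on $Y$. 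Together with the preliminary observations, this shows $\iota$ restricts to the desired norm-preserving isomorphism.
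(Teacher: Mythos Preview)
Your argument is correct and complete. The paper, however, proceeds differently: rather than checking continuity of $\iota(\eta)$ and of its inverse by hand, it embeds both $\mathscr{K}_{K \times L}(X \times Y, Z)$ and $\mathscr{K}_K(X, \mathscr{K}_L(Y, Z))$ into the larger spaces $\mathscr{C}(X \times Y, Z)$ and $\mathscr{C}(X, \mathscr{C}(Y, Z))$ equipped with the compact-open topology, observes that on these subspaces the compact-open topology coincides with the sup-norm topology, and then invokes the general exponential law for the compact-open topology (citing tom Dieck) to conclude that $\iota$ is a bijection at the level of $\mathscr{C}$-spaces. Your approach is more elementary and fully self-contained, making explicit where compactness of $L$ is used (the finite-subcover step); the paper's approach is shorter but exports the substance to an external reference.
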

\begin{proof}
    Clearly, we have $\sup \im |\eta| = \sup \im |\iota(\eta)|$ whenever $\eta \in \mathscr{K}_{K \times L}(X \times Y, Z)$. For topological spaces $A$ and $B$, we denote by $\mathscr{C}(A, B)$ the space of all continuous functions $f: A \to B$ endowed with the compact-open topology. In case that $B$ is a normed space, it is straightforward to verify that this topology agrees with the locally convex topology induced by the semi-norms with value $\sup |f|[K]$ at $f \in \mathscr{C}(A, B)$ corresponding to compact subsets $K$ of $A$. It follows that the inclusion map $\mathscr{K}_K(X, B) \to \mathscr{C}(X, B)$ is a homeomorphic embedding whenever $B$ is a normed space and $K$ is a compact subset of $X$. Since the restriction 
    \[
        \iota | \mathscr{C}(X \times Y, Z): \mathscr{C}(X \times Y, Z) \to \mathscr{C}(X, \mathscr{C}(Y, Z))
    \]
    defines a bijection by \cite[2.4.7]{tD08_MR2456045}, the assertion is obvious.
\end{proof}


\begin{remark}
    The above lemma is a generalization of \cite[III, \S 4, No.\ 1, Lemma 1(i)]{Bourbaki_integration_MR2018901} to vector-valued functions.
\end{remark}


\begin{lemma}[see \protect{\cite[III, \S 1, No.\ 2, Proposition 5]{Bourbaki_integration_MR2018901}}]
    \label{lemma: K(X) tensor Z density}
    Suppose $X$ is a locally compact Hausdorff space, $Z$ is a locally convex space, and $K$ is a compact subset of $X$. Then, the image of the canonical map $\mathscr{K}_K(X) \otimes Z \to \mathscr{K}_K(X, Z)$ is dense.
\end{lemma}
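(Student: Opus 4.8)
The plan is to reduce the density assertion to a uniform approximation statement and then build the approximant by a partition of unity. Since $\mathscr{K}_K(X, Z)$ carries the topology of uniform convergence --- whose continuous seminorms are those of the form $\zeta \mapsto \sup \{ p(\zeta(x)) \with x \in X \}$ for $p$ a continuous seminorm on $Z$ --- it suffices to show: given $\eta \in \mathscr{K}_K(X, Z)$, a continuous seminorm $p$ on $Z$, and $\varepsilon > 0$, there is $\xi$ in the image of the canonical map, i.e.\ $\xi = \sum_{i=1}^n g_i(\cdot) z_i$ with $g_i \in \mathscr{K}_K(X)$ and $z_i \in Z$, such that $p(\eta(x) - \xi(x)) \le \varepsilon$ for all $x \in X$. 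The key preliminary observation is that $\eta$ vanishes on $X \without \interior K$: indeed $\eta$ vanishes on the open set $X \without K \subset X \without \spt \eta$, hence on its closure $X \without \interior K$ by continuity. Consequently $L = \{ x \with p(\eta(x)) \ge \varepsilon \}$, being a closed subset of $K$ that is disjoint from $X \without \interior K$, is a compact subset of $\interior K$.

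Next I would carry the construction out entirely inside $\interior K$. Using local compactness, choose an open set $U$ with $L \subset U$, $\closure U$ compact, and $\closure U \subset \interior K$; for each $a \in \closure U$, continuity of $\eta$ together with local compactness yields an open neighbourhood $V_a$ of $a$ with $\closure V_a$ compact, $\closure V_a \subset \interior K$, and $p(\eta(x) - \eta(a)) < \varepsilon$ whenever $x \in V_a$. By compactness of $\closure U$ finitely many $V_{a_1}, \dotsc, V_{a_n}$ cover $\closure U$, and the partition of unity theorem for locally compact Hausdorff spaces furnishes $g_1, \dotsc, g_n \in \mathscr{K}(X)^+$ with $\spt g_i \subset V_{a_i}$, $\sum_{i=1}^n g_i \le 1$ on $X$, and $\sum_{i=1}^n g_i = 1$ on $\closure U$. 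Because $\spt g_i \subset V_{a_i} \subset \interior K \subset K$, the function $\xi = \sum_{i=1}^n g_i(\cdot) \eta(a_i)$ is the image of $\sum_{i=1}^n g_i \otimes \eta(a_i) \in \mathscr{K}_K(X) \otimes Z$.

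Finally I would estimate $p(\eta(x) - \xi(x))$ by cases. For $x \in \closure U$ one has $\sum_i g_i(x) = 1$, so $\eta(x) - \xi(x) = \sum_i g_i(x)(\eta(x) - \eta(a_i))$, and since $g_i(x) \neq 0$ forces $x \in V_{a_i}$ this gives $p(\eta(x) - \xi(x)) \le \varepsilon$. For $x \in K \without \closure U$ one has $x \notin L$, hence $p(\eta(x)) < \varepsilon$, and for each $i$ with $g_i(x) \neq 0$ also $x \in V_{a_i} \without L$, so $p(\eta(a_i)) \le p(\eta(x)) + p(\eta(x) - \eta(a_i)) < 2 \varepsilon$; thus $p(\eta(x) - \xi(x)) \le p(\eta(x)) + \sum_i g_i(x) p(\eta(a_i)) < 3 \varepsilon$. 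For $x \notin K$ one has $\eta(x) = 0$ and, since $\spt g_i \subset \interior K$ forces $g_i(x) = 0$, also $\xi(x) = 0$. Hence $\sup \{ p(\eta(x) - \xi(x)) \with x \in X \} \le 3 \varepsilon$, and running the argument with $\varepsilon / 3$ in place of $\varepsilon$ finishes the proof. The step I expect to be the main obstacle is respecting the support constraint $\spt \xi \subset K$: a naive partition of unity subordinate to a finite cover of $\spt \eta$ produces an approximant supported only in a neighbourhood of $K$, and the remedy is exactly the observation that $\eta$ vanishes off $\interior K$, so that the relevant part $L$ of $\eta$ stays away from $\boundary K$ and the whole construction fits inside $\interior K$. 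The remaining ingredients --- the partition of unity theorem and the identification of the uniform seminorms as a neighbourhood basis of $0$ in $\mathscr{K}_K(X, Z)$ --- are routine.
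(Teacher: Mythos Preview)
The paper does not supply its own proof of this lemma; it merely records the reference to Bourbaki, \emph{Integration}, III, \S 1, No.\ 2, Proposition 5. Your argument is correct and is essentially the standard partition-of-unity proof given there, including the care you take to keep the approximant supported inside $K$ by first observing that $p \circ \eta$ vanishes on $X \without \interior K$ so that the construction can be carried out entirely within $\interior K$.
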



\begin{lemma}[see \protect{\cite[III, \S 4, No.\ 1, Lemma 1(ii)]{Bourbaki_integration_MR2018901}}]
    \label{lemma: K(X) tensor K(Y) density}
    Suppose $X$ and $Y$ are locally compact Hausdorff spaces, $K$ and $L$ are compact subsets of $X$ and $Y$, respectively. Then, the image of the canonical map $\mathscr{K}_K(X) \otimes \mathscr{K}_L(Y) \to \mathscr{K}_{K \times L}(X \times Y)$ is dense.
\end{lemma}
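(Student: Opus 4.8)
The plan is to reduce this two-variable density statement to the one-variable density statement \ref{lemma: K(X) tensor Z density} by means of the exponential law \ref{lemma: exponential law}, taking the coefficient Banach space there to be $\mathscr{K}_L(Y)$. First I would invoke \ref{lemma: exponential law} with $Z = \mathbf{R}$ to obtain the norm-preserving isomorphism of Banach spaces
\[
    \iota \colon \mathscr{K}_{K \times L}(X \times Y) \to \mathscr{K}_K\bigl(X, \mathscr{K}_L(Y)\bigr), \qquad \iota(\eta)(x) = \eta(x, \cdot);
\]
in particular $\mathscr{K}_L(Y)$ is a Banach space, hence a locally convex space to which \ref{lemma: K(X) tensor Z density} applies.

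Next I would compare the two relevant canonical maps. Write $j_1 \colon \mathscr{K}_K(X) \otimes \mathscr{K}_L(Y) \to \mathscr{K}_{K \times L}(X \times Y)$ for the canonical map in the statement, so that $j_1(u \otimes v)$ is the function $(x, y) \mapsto u(x)\, v(y)$ --- which has support contained in $\spt u \times \spt v \subset K \times L$, so it does land in $\mathscr{K}_{K \times L}(X \times Y)$ --- and write $j_2 \colon \mathscr{K}_K(X) \otimes \mathscr{K}_L(Y) \to \mathscr{K}_K(X, \mathscr{K}_L(Y))$ for the canonical map of \ref{lemma: K(X) tensor Z density} with $Z = \mathscr{K}_L(Y)$, so that $j_2(u \otimes v)$ is the function $x \mapsto u(x)\, v$. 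Evaluating on simple tensors gives $\iota \circ j_1 = j_2$, and so by linearity $\iota$ carries $\im j_1$ onto $\im j_2$.

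Finally, \ref{lemma: K(X) tensor Z density}, applied with $Z = \mathscr{K}_L(Y)$, tells us that $\im j_2$ is dense in $\mathscr{K}_K(X, \mathscr{K}_L(Y))$; since $\iota$ is a homeomorphism, $\im j_1$ is dense in $\mathscr{K}_{K \times L}(X \times Y)$, which is the assertion. I do not expect a genuine obstacle: the whole content is to recognize that \ref{lemma: K(X) tensor Z density} may be run with $\mathscr{K}_L(Y)$ as coefficient space and that this is compatible, through $\iota$, with forming tensor products inside $\mathscr{K}_{K \times L}(X \times Y)$; the only point requiring a moment's care is the identity $\iota \circ j_1 = j_2$ together with the bookkeeping that the supremum norms on $\mathscr{K}_{K \times L}(X \times Y)$ and $\mathscr{K}_K(X, \mathscr{K}_L(Y))$ correspond under $\iota$, which is already contained in \ref{lemma: exponential law}.
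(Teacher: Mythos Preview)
Your proposal is correct and follows exactly the same approach as the paper: apply \ref{lemma: K(X) tensor Z density} with $Z = \mathscr{K}_L(Y)$ and transport the conclusion through the exponential-law isomorphism of \ref{lemma: exponential law}. The paper's proof is the one-line version of what you wrote; your additional verification that $\iota \circ j_1 = j_2$ is the detail implicitly used there.
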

\begin{proof}
    Apply \ref{lemma: K(X) tensor Z density} with $Z = \mathscr{K}_L(Y)$ and the assertion follows from \ref{lemma: exponential law}.
\end{proof}


\begin{definition}
    \label{definition: space of probability Radon measures}
    Suppose $X$ is a locally compact Hausdorff space. We define $\mathbf{P}(X)$ to be the space of all probability Radon measures over $X$, and it is endowed with the subspace topology induced from $\mathscr{K}(X)^*$.
\end{definition}


\begin{lemma}
    \label{lemma: two weak topologies agree}
    Suppose $X$ is a locally compact Hausdorff space. Then, the weak topology on $\mathbf{P}(X)$ agrees with the initial topology induced from the maps $\mu \mapsto \textstyle\int g \ud \mu$ corresponding to bounded continuous functions $g: X \to \mathbf{R}$.

    Consequently, a sequence $\mu_i$ converges to $\mu$ weakly if and only if $\textstyle\int g \ud \mu_i \to \textstyle\int g \ud \mu$ whenever $g: X \to \mathbf{R}$ is a bounded continuous function.
\end{lemma}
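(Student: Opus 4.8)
The plan is to show that each of the two topologies on $\mathbf{P}(X)$ refines the other. Write $\tau_w$ for the weak topology, that is, the initial topology induced by the maps $\mu \mapsto \int f \ud \mu$ for $f \in \mathscr{K}(X)$ (this is the subspace topology from $\mathscr{K}(X)^*$), and write $\tau_b$ for the initial topology induced by the maps $\mu \mapsto \int g \ud \mu$ for bounded continuous $g : X \to \mathbf{R}$. Since every $f \in \mathscr{K}(X)$ is bounded and continuous, each defining map of $\tau_w$ is $\tau_b$-continuous, so by minimality of $\tau_w$ among topologies rendering those maps continuous we get $\tau_w \subset \tau_b$. The content is the reverse inclusion; by minimality of $\tau_b$ it suffices to prove that $\mu \mapsto \int g \ud \mu$ is $\tau_w$-continuous whenever $g : X \to \mathbf{R}$ is bounded and continuous.

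To establish this I would fix $\mu_0 \in \mathbf{P}(X)$ and $\varepsilon > 0$ and exploit that a probability Radon measure is almost concentrated on a compact set: since $\mu_0(X) = 1 < \infty$, inner regularity of the Radon measure $\mu_0$ (cf.\ \cite[2.5.14]{Federer69_MR0257325}) yields a compact $K \subset X$ with $\mu_0(X \without K) < \varepsilon$, and Urysohn's lemma for locally compact Hausdorff spaces then provides $\chi \in \mathscr{K}(X)$ with $0 \leq \chi \leq 1$ and $\chi | K = 1$, so that $\int (1-\chi) \ud \mu_0 \leq \mu_0(X \without K) < \varepsilon$. Now decompose $g = g\chi + g(1-\chi)$. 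The function $g\chi$ is continuous with support contained in $\spt \chi$, hence $g\chi \in \mathscr{K}(X)$; moreover, boundedness of $g$ gives, for every $\mu \in \mathbf{P}(X)$,
\[
    \Bigl| \int g(1-\chi) \ud \mu \Bigr| \leq (\sup \im |g|) \int (1-\chi) \ud \mu = (\sup \im |g|)\Bigl(1 - \int \chi \ud \mu\Bigr).
\]
Consequently the set $U$ of those $\mu \in \mathbf{P}(X)$ with $\bigl| \int \chi \ud \mu - \int \chi \ud \mu_0 \bigr| < \varepsilon$ and $\bigl| \int g\chi \ud \mu - \int g\chi \ud \mu_0 \bigr| < \varepsilon$ is a $\tau_w$-neighbourhood of $\mu_0$, because both $\chi$ and $g\chi$ lie in $\mathscr{K}(X)$; for $\mu \in U$ one has $\int (1-\chi) \ud \mu < 2\varepsilon$ and therefore
\[
    \Bigl| \int g \ud \mu - \int g \ud \mu_0 \Bigr| \leq \Bigl| \int g\chi \ud \mu - \int g\chi \ud \mu_0 \Bigr| + (\sup \im |g|)\Bigl( \int (1-\chi) \ud \mu + \int (1-\chi) \ud \mu_0 \Bigr) < (1 + 3 \sup \im |g|)\,\varepsilon.
\]
As $\varepsilon$ was arbitrary this gives $\tau_w$-continuity at $\mu_0$, and since $\mu_0$ was arbitrary, the map $\mu \mapsto \int g \ud \mu$ is $\tau_w$-continuous; hence $\tau_b \subset \tau_w$ and $\tau_w = \tau_b$.

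For the concluding assertion on sequences, once $\tau_w = \tau_b$ it suffices to use the elementary fact that for a topology which is the initial one induced by a family of real-valued maps $h_i$, a sequence converges to a point exactly when each $h_i$ sends it to a convergent sequence with the correct limit: the forward direction is continuity of the $h_i$, and the backward direction holds because every neighbourhood of the limit contains a basic one of the form $\bigcap_{j=1}^{N} h_{i_j}^{-1}(V_j)$ with the $V_j$ open, which the sequence eventually enters as $N$ is finite. Taking $h_i$ to range over the maps $\mu \mapsto \int g \ud \mu$ with $g$ bounded continuous gives the stated characterisation.

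The step I expect to be the main obstacle is exactly the passage from $\mathscr{K}(X)$ to bounded continuous functions, that is, controlling the mass escaping to infinity: the estimate $\bigl| \int g(1-\chi) \ud \mu \bigr| \leq (\sup \im |g|)(1 - \int \chi \ud \mu)$ is what simultaneously forces the boundedness of $g$ and the passage to \emph{probability} (rather than arbitrary finite) measures, since it is only because all the $\mu$ share the total mass $1$ that a single cut-off $\chi$, chosen using $\mu_0$ alone, controls $1 - \int \chi \ud \mu$ uniformly over a $\tau_w$-neighbourhood of $\mu_0$.
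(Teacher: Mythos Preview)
Your proof is correct, but it proceeds by a different route than the paper's. You argue directly: for fixed $\mu_0$ you choose a compact set carrying most of the mass, build a cutoff $\chi \in \mathscr{K}(X)$, decompose $g = g\chi + g(1-\chi)$, and use the common total mass $1$ to control the tail $\int g(1-\chi)\,\mathrm{d}\mu$ uniformly on a $\tau_w$-neighbourhood determined by $\chi$ and $g\chi$. The paper instead shows that for nonnegative bounded continuous $g$ one has $\int g\,\mathrm{d}\mu = \sup\{\int f\,\mathrm{d}\mu : f \in \mathscr{K}(X),\ 0 \le f \le g\}$, so $\mu \mapsto \int g\,\mathrm{d}\mu$ is a supremum of $\tau_w$-continuous functions and hence lower semicontinuous; applying this to $M - g$ (using $\mu(X)=1$ to convert the constant) gives upper semicontinuity, hence continuity. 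Your argument is a self-contained $\varepsilon$--$\delta$ estimate requiring only Urysohn and inner regularity, while the paper's is a two-line semicontinuity trick once the sup-formula is in hand; both lean on the probability normalisation at exactly one place (you in $\int(1-\chi)\,\mathrm{d}\mu = 1 - \int\chi\,\mathrm{d}\mu$, the paper in $e_{M-g} = M - e_g$).
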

\begin{proof}
    Clearly, the weak topology of $\mathbf{P}(X)$ is weaker than the initial topology. Denoting by $e_g(\mu) = \textstyle\int g \ud \mu$, then it is enough to show that $e_g: \mathbf{P}(X) \to \mathbf{R}$ is continuous with respect to the weak topology whenever $g: X \to \mathbf{R}$ is a non-negative bounded continuous function. 
    By \cite[2.2.5]{Federer69_MR0257325} and approximation with functions in $\mathscr{K}(X)$, we can show that
    \[
        \mu(g) = \sup \{ \mu(f) \with 0 \leq f \leq g, f \in \mathscr{K}(X) \}
    \]
    whenever $\mu \in \mathbf{P}(X)$ and $g: X \to \mathbf{R}$ is a non-negative bounded continuous function; in this case, it follows that $e_g$ equals the supremum of the family of continuous functions $e_f$ corresponding to $f \in \mathscr{K}(X)$ with $0 \leq f \leq g$, and therefore $e_g$ is lower semi-continuous. Replacing $g$ with $M - g$, where $M$ is a large number such that $M - g \geq 0$, we have $e_g = -(e_{M-g} - M)$ is upper semi-continuous.
\end{proof}


\begin{remark}
    The initial topology mentioned in the lemma is usually termed weak topology in texts on probability theory (see \cite[13.14(ii)]{Kle20_MR4201399}), and the lemma shows that these two notions of weak topology agree.
\end{remark}


\begin{definition}
    \label{definition: Young function}
    Suppose $X$ and $Y$ are locally compact Hausdorff spaces and $\mu$ is a Radon measure over $X$. By a \textit{$\mu$ Young function $f$ of type $Y$}, we mean a $\mu$ measurable function with values in $\mathbf{P}(Y)$.
\end{definition}


\begin{remark}
    \label{remark: single-valued Young function}
    Suppose $\boldsymbol{\updelta}: Y \to \mathscr{K}(Y)^*$ is given by $\boldsymbol{\updelta}_y (\beta) = \beta(y)$ whenever $y \in Y$ and $\beta \in \mathscr{K}(Y)$, $g$ maps a subset of $X$ into $Y$, $\mu(X \without \domain g) = 0$, and $f = \boldsymbol{\updelta} \circ g$. Recall from \cite[2.5.19]{Federer69_MR0257325} that $\boldsymbol{\updelta}$ is a homeomorphic embedding, it follows that $g$ is $\mu$ measurable if and only if $f$ is $\mu$ measurable.
\end{remark}


\begin{remark}
    \label{remark: measurability criterion}
    In view of \cite[2.12, 2.22]{Menne16a_MR3528825}, if $Y$ is second-countable, then a $\mathbf{P}(Y)$-valued function $f$ defined for $\mu$ almost everywhere is $\mu$ measaurable if and only if $x \mapsto \textstyle\int \beta \ud f(x)$ is $\mu$ measurable whenever $\beta \in \mathscr{K}(Y)$.
\end{remark}


\begin{example}
    \label{example: Almgren's Q-valued function}
    Let $m, Q \in \mathscr{P}$. Suppose $f$ is a $\mathscr{L}^m$ measurable $\mathbf{Q}_Q(\mathbf{R}^n)$-valued function in the sense of \cite[1.1]{Almgren2000_MR1777737}. By \cite[0.4]{DLS11_MR2663735}, there exists $\mathscr{L}^m$ measurable $\mathbf{R}^n$-valued functions $f_1, f_2, \dotsc, f_Q$ such that $f= \boldsymbol{\updelta} \circ f_1 + \boldsymbol{\updelta} \circ f_2 + \dotsm + \boldsymbol{\updelta} \circ f_Q$; in particular, $Q^{-1}f$ is a $Q \mathscr{L}^m$ Young function of type $\mathbf{R}^n$ by \ref{remark: measurability criterion}.
\end{example}


\begin{definition}
    \label{definition: graph measure}
    Suppose $X$ and $Y$ are locally compact Hausdorff spaces, $\mu$ is a Radon measure over $X$, and $f$ is $\mu$ Young function of type $Y$. Then the \textit{graph measure $\mathbf{Y}(\mu, f)$ associated with $\mu$ and $f$} is the Radon measure over $X \times Y$ such that (see \ref{remark: graph measures well-defined})
    \[
        \mathbf{Y}(\mu, f)(\psi) = \textstyle\iint \psi(x, y) \ud f(x) \, y \ud \mu \, x 
    \]
    whenever $\psi \in \mathscr{K}(X \times Y)$.
\end{definition}


\begin{remark}
    \label{remark: graph measures well-defined}
    The notion of graph measure is well-defined. By \ref{lemma: K(X) tensor K(Y) density}, for $\psi \in \mathscr{K}(X \times Y)$ and $i \in \mathscr{P}$, there exist $k_i \in \mathscr{P}$, $\alpha_j \in \mathscr{K}(X)$ and $\beta_j \in \mathscr{K}(Y)$ for $j = 1, \dotsc, k_i$ such that 
    \[ 
        \left| \psi(x, y) - \sum_{j = 1}^{k_i} \alpha_j(x) \beta_j(y) \right| \leq i^{-1} \quad \text{whenever $(x, y) \in X \times Y$}
    \]
    and hence
    \[ 
        \lim_{i \to \infty} \left| \textstyle\int \psi(x, y) \ud f(x) \, y - \sum\limits_{j = 1}^{k_i} \alpha_j(x)\textstyle\int \beta_j(y) \ud f(x) \, y \right| = 0 \quad \text{for $\mu$ almost all $x$.} 
    \]
    Thus, the map $x \mapsto \textstyle\int \psi(x, y) \ud f(x) \, y$ is $\mu$ measurable.
\end{remark}


\begin{remark}
    \label{remark: comparison measure-function pair}
    Suppose $m, n \in \mathscr{P}$ and $g$ is a $\mathscr{L}^m$ measurable function with values in $\mathbf{R}^n$. Then, $f = \boldsymbol{\updelta} \circ g$ is a $\mathscr{L}^m$ Young function of type $\mathbf{R}^n$; 
    letting $G(x) = (x, g(x))$ for $x \in \domain g$, we have that $G_\# \mathscr{L}^m$ is a Radon measure by \cite[2.11]{MS25_MR4864996} and \cite[2.2.2]{Federer69_MR0257325}, and that the graph measure $\mathbf{Y}(\mathscr{L}^m, f)$ associated with the pair $(\mathscr{L}^m, f)$ equals $G_\# \mathscr{L}^m$. Compared with the notion of measure-function pairs introduced by Hutchinson (see \cite[4.1.1]{Hutchinson86_MR825628}), our setting does not require any summability condition on functions $g$; also, this example shows our notion of graph measure generalizes the one in \cite[4.3.1]{Hutchinson86_MR825628}.
\end{remark}


\begin{remark}
    \label{remark: Young measures}
    Young functions are named after Laurence C. Young for their connection with Young measures. Suppose $m, n \in \mathscr{P}$ and $U$ is a bounded open subset of $\mathbf{R}^m$. Using our terminology, the Young measures associated with a bounded sequence of functions $g_i$ in $\mathbf{L}_{\infty}(\mathscr{L}^m \restrict U)$ form a $\mathscr{L}^m \restrict U$ Young function $f$ of type $\mathbf{R}^n$ such that 
    \[
        \mathbf{Y}(\mathscr{L}^m \restrict U, f) = \lim_{i \to \infty} \mathbf{Y}(\mathscr{L}^m \restrict U, \boldsymbol{\updelta} \circ g_i),
    \]
    see \cite[2.30(i)]{AFP00_MR1857292} and \ref{remark: graph measures well-defined}.
\end{remark}


\begin{example}
    \label{example: varifold disintegration}
    Suppose $m, n \in \mathscr{P}$ and $U$ is an open subset of $\mathbf{R}^n$. Employing the notation of Allard's varifold disintegration (see \cite[3.3]{Allard72_MR307015}), for each varifold $V \in \mathbf{V}_m(U)$, the formula $x \mapsto V^{(x)}$ defines a $\|V\|$ Young function of type $\mathbf{G}(n, m)$ such that $V = \mathbf{Y}(\|V\|, V^{(\cdot)})$. A similar statement also holds for Young's generalized surfaces (which are termed oriented varifolds in \cite[Section 3]{Hutchinson86_MR825628}), see \cite[Part I, Section 4, (b), Section 10]{Fle20_MR4077153} and \cite[Section 3]{Fle57_MR95923}.
\end{example}


The following lemma shows that the disintegration formula in the definition of graph measures holds for a much larger class of functions.


\begin{lemma}
    \label{lemma: extension of disintegration formula to integrable functions}
    Suppose $X$ and $Y$ are second-countable locally compact Hausdorff spaces, $\mu$ is a Radon measure over $X$, and $f$ is a $\mu$ Young function of type $Y$. Then, we have
    \[
        \textstyle\int \psi \ud \mathbf{Y}(\mu, f) = \textstyle\iint \psi(x, y) \ud f(x) \, y \ud \mu \, x 
    \]
    whenever $\psi$ is a $\mathbf{Y}(\mu, f)$ integrable $\overline{\mathbf{R}}$-valued function.
\end{lemma}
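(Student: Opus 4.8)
The plan is to extend the disintegration formula from $\mathscr{K}(X \times Y)$ to all $\mathbf{Y}(\mu, f)$ integrable functions via the standard three-step ascent in measure theory: first upgrade from continuous compactly supported functions to characteristic functions of open sets (or, dually, to lower semicontinuous functions), then to general Borel sets, then to nonnegative measurable functions by monotone convergence, and finally to integrable functions by splitting into positive and negative parts. Throughout I would write $\Gamma = \mathbf{Y}(\mu, f)$ and keep in mind that second-countability of $X$ and $Y$ makes $X \times Y$ second-countable locally compact Hausdorff, so Radon measures there are inner and outer regular in the usual way and $\sigma$-compactness is available; this is what licenses the monotone approximations below.

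First I would treat a bounded open set $W \subset X \times Y$ with compact closure: by inner regularity and Urysohn's lemma in a locally compact Hausdorff space, one finds $\psi_j \in \mathscr{K}(X \times Y)$ with $0 \le \psi_j \le 1$ and $\psi_j \uparrow \mathbf{1}_W$ pointwise. Applying the defining identity of $\Gamma$ to each $\psi_j$ and passing to the limit — on the left by monotone convergence for $\Gamma$, on the right by monotone convergence applied first to the inner integral $y \mapsto \int \psi_j(x,y)\ud f(x)\,y$ for each fixed $x$ (legitimate since $f(x) \in \mathbf{P}(Y)$ is a genuine Radon measure) and then to the outer $\mu$-integral, using that the inner integrals are $\mu$ measurable functions of $x$ by the argument in \ref{remark: graph measures well-defined} — yields the formula for $\mathbf{1}_W$. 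A routine complementation and countable-union argument, together with $\sigma$-compactness to reduce to bounded sets, then promotes this to $\mathbf{1}_B$ for every Borel set $B$ with $\Gamma(B) < \infty$, hence (again by monotone convergence on both sides) to every nonnegative Borel measurable function. The one point requiring a little care is measurability in $x$ of the map $x \mapsto \int \psi(x,y)\ud f(x)\,y$ for these larger classes of $\psi$; this is handled by noting that the class of $\psi \ge 0$ for which both sides make sense and agree is closed under monotone limits and contains $\mathscr{K}(X \times Y)$, and invoking \ref{remark: measurability criterion} together with a monotone class / Dynkin argument.

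Having the identity for nonnegative Borel functions, I would extend it to nonnegative $\Gamma$ measurable $\overline{\mathbf{R}}$-valued functions by modifying on a $\Gamma$ null set: a $\Gamma$ null set $N$ satisfies $0 = \Gamma(N) = \int f(x)(N_x)\ud\mu\,x$ where $N_x = \{y : (x,y) \in N\}$, so $f(x)(N_x) = 0$ for $\mu$ almost all $x$, and thus changing $\psi$ on $N$ changes neither side. Finally, for a general $\Gamma$ integrable $\psi$, write $\psi = \psi^+ - \psi^-$ with both parts nonnegative and $\Gamma$ integrable; the formula holds for each, the right-hand inner integrals $\int \psi^{\pm}(x,y)\ud f(x)\,y$ are finite for $\mu$ almost all $x$ (because their $\mu$ integrals are finite), so the difference is well-defined $\mu$ almost everywhere and subtraction gives the claim.

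The main obstacle I anticipate is not any single deep step but the bookkeeping around measurability and finiteness: one must ensure at each stage that $x \mapsto \int \psi(x,y)\ud f(x)\,y$ is $\mu$ measurable (so the outer integral even makes sense) and that the monotone convergence theorem is being applied to genuinely monotone, appropriately integrable families on both the $X \times Y$ side and the iterated side. Second-countability is the workhorse that keeps all of this clean — it gives a countable base of relatively compact open sets for the Urysohn approximations and makes the $\sigma$-compactness reductions and the monotone class argument for measurability go through without set-theoretic pathologies. I would organize the proof so that the monotone-class principle is invoked exactly once, for nonnegative functions, and everything else is formal.
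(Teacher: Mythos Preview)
Your proposal is correct and follows essentially the same route as the paper: the paper also defines the class $F$ of measurable sets on which the formula holds for characteristic functions, shows $F$ contains open sets via approximation from $\mathscr{K}(X\times Y)$ and monotone convergence, then uses outer regularity (a countable intersection of open sets covering $A$) together with a separate treatment of $\Gamma$-null sets to reach all $\Gamma$ measurable sets, and finally invokes the standard passage to integrable functions. One small point worth tightening: when you write $0=\Gamma(N)=\int f(x)(N_x)\ud\mu\,x$ for an arbitrary $\Gamma$-null set $N$, the slice $N_x$ need not be $f(x)$ measurable; the paper handles this by covering $N$ by a $G_\delta$ set $\bigcap G$ of $\Gamma$ measure zero (for which the formula is already known) and then comparing via the upper integral $\int^*\chi_{N}(x,\cdot)\ud f(x)\le \int\chi_{\bigcap G}(x,\cdot)\ud f(x)=0$.
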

\begin{proof}
    Let $F$ consist of all $\mathbf{Y}(\mu, f)$ measurable subsets $A$ of $X \times Y$ such that the formula remains true with $\psi$ replaced by the characteristic function $\chi_A$ of $A$. 
    
    \textbf{Step 1.}
    Note that every open subset of $X \times Y$ is a countable union of compact subsets of $X \times Y$, and the disintegration formula holds for $\psi \in \mathscr{K}(X \times Y)$. In view of \cite[2.4.7]{Federer69_MR0257325}, $F$ contains all the open subsets of $X \times Y$ and $F$ is stable under countable increasing unions. If $A_i \in F$ for $i \in \mathscr{P}$ satisfies $\mathbf{Y}(\mu, f)(A_1) < \infty$ and $A_{i+1} \subset A_i$ for $i \in \mathscr{P}$, then $\bigcap_{i=1}^\infty A_i \in F$ by \cite[2.4.9]{Federer69_MR0257325}.
    
    \textbf{Step 2.}
    If $\mathbf{Y}(\mu, f)(A) = 0$, there exists a countable nonempty subfamily $G \subset F$ of open subsets in $X \times Y$ such that $A \subset \bigcap G$ and $\mathbf{Y}(\mu, f)(\bigcap G) = 0$. Therefore, we have
    \[ 
        \textstyle\int^* \chi_A(x, \cdot) \ud f(x) \leq \textstyle\int \chi_{\bigcap G}(x, \cdot) \ud f(x) = 0 \quad \text{for $\mu$ almost all $x$} 
    \]
    and this shows $F$ contains all the sets $A$ with $\mathbf{Y}(\mu, f)(A) = 0$.
    
    \textbf{Step 3.}
    If $A$ is a $\mathbf{Y}(\mu, f)$ measurable set with $\mathbf{Y}(\mu, f)(A) < \infty$, then there exists a countable nonempty subfamily $G \subset F$ of open subsets in $X \times Y$ such that $A \subset \bigcap G$ and $\mathbf{Y}(\mu, f)((\bigcap G) \without A) = 0$, hence
    \[
        \textstyle\int \chi_{(\bigcap G) \without A}(x, \cdot) \ud f(x) = 0 \quad \text{for $\mu$ almost all $x$};
    \]
    it follows that
    \[ 
        \textstyle\int \chi_A(x, \cdot) \ud f(x) = \textstyle\int \chi_{\bigcap G}(x, \cdot) \ud f(x) \quad \text{for $\mu$ almost all $x$},
    \]
    hence $A \in F$. Note that $X \times Y$ is countably $\mathbf{Y}(\mu, f)$ measurable. Therefore, $F$ contains all $\mathbf{Y}(\mu, f)$ measurable sets.
    
    Now, the assertion follows from \cite[2.3.3, 2.4.8, 2.4.4(6)]{Federer69_MR0257325}.
\end{proof}


To prove the compactness theorem and disintegration theorem, we want to reduce the problem by replacing $Y$ with its one-point compactification $Z$; for this purpose, we study the relation between the Radon measures over $Y$ and their image under the inclusion map $\iota: Y \to Z$.


\begin{lemma}
    \label{lemma: compactness implies uniform smallness at infinity}
    Suppose $Z$ is a compact Hausdorff space, $C$ is a closed subset of $Z$, and $M$ is a nonempty compact family of Radon measures over $Z$ such that $\phi(C) = 0$ for all $\phi \in M$. Then, there holds
    \[
        \lim_{L \in P} \sup \{ \phi(Z \without L) \with \phi \in M \} = 0,
    \]
    where $P$ is the family of compact subsets of $Z \without C$ directed by the order $\preceq$ such that $L_1 \preceq L_2$ if and only if $L_1 \subset L_2$ for $L_1, L_2 \in P$.
\end{lemma}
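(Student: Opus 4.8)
The plan is to reduce the statement to a uniform approximation claim and then settle it by combining inner regularity of Radon measures, Urysohn's lemma, and the compactness of $M$. First I would observe that $L \mapsto \sup\{\phi(Z\without L) \with \phi \in M\}$ is non-increasing along $(P, \preceq)$, since $L_1 \preceq L_2$ forces $Z\without L_2 \subset Z\without L_1$; hence the limit along $P$ exists and equals the infimum over $P$, and it suffices to prove that for every $\epsilon > 0$ there is a \emph{single} $L \in P$ with $\phi(Z\without L) \leq \epsilon$ for all $\phi \in M$. The genuine obstacle is the uniformity in $\phi$: the functionals $\phi \mapsto \phi(Z\without L)$ are only lower semicontinuous on $M$ (being suprema of the continuous functionals $\phi \mapsto \textstyle\int g \ud \phi$, $0 \leq g \leq \chi_{Z\without L}$), which is the wrong variance for a finite-intersection argument, so the whole point of the proof is to pass through continuous functions before applying compactness.

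Fix $\epsilon > 0$. For each $\phi \in M$ I would argue in isolation: since $Z$ is compact, $\phi$ is a finite Radon measure; since $Z\without C$ is open and $\phi(C) = 0$, inner regularity (see \cite[2.2.5]{Federer69_MR0257325}) yields a compact $K_\phi \subset Z\without C$ with $\phi(Z\without K_\phi) = \phi((Z\without C)\without K_\phi) < \epsilon/2$. As $K_\phi$ and $C$ are disjoint closed subsets of the normal space $Z$, Urysohn's lemma provides $w_\phi \in \mathscr{K}(Z)$ with $0 \leq w_\phi \leq 1$, $w_\phi = 0$ on $K_\phi$, and $w_\phi = 1$ on $C$; consequently $\textstyle\int w_\phi \ud \phi \leq \phi(Z\without K_\phi) < \epsilon/2$.

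Next I would uniformize. Let $\mathcal{W}$ denote the set of $w \in \mathscr{K}(Z)$ with $0 \leq w \leq 1$ and $w|_C = 1$; it is stable under finite minima and contains every $w_\phi$. For $w \in \mathcal{W}$ the set $N_w = \{\phi \in M \with \textstyle\int w \ud \phi \geq \epsilon/2\}$ is closed in $M$, because $\phi \mapsto \textstyle\int w \ud \phi$ is continuous on $M$ with respect to the weak topology. By the preceding paragraph $\bigcap_{w \in \mathcal{W}} N_w = \emptyset$, so the compactness of $M$ gives $w_1, \dotsc, w_k \in \mathcal{W}$ with $\bigcap_{j=1}^k N_{w_j} = \emptyset$; then $w := \min\{w_1, \dotsc, w_k\}$ lies in $\mathcal{W}$ and satisfies $\textstyle\int w \ud \phi < \epsilon/2$ for every $\phi \in M$.

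Finally I would convert $w$ back into a compact set. Setting $L = \{z \in Z \with w(z) \leq 1/2\}$, this is a closed, hence compact, subset of $Z$ that is disjoint from $C$ because $w|_C = 1$, so $L \in P$; and since $w > 1/2$ on $Z\without L$ we have $\chi_{Z\without L} \leq 2w$ on $Z$, whence $\phi(Z\without L) \leq 2\textstyle\int w \ud \phi < \epsilon$ for all $\phi \in M$. This establishes the reduced claim, and with the monotonicity from the first step the lemma follows. I expect essentially all of the difficulty to be concentrated in recognizing that one must detour through $\mathscr{K}(Z)$ (the lower-semicontinuity obstruction) rather than arguing directly with compact sets; once that is in place, each individual step is routine.
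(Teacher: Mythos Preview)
Your proof is correct. Both your argument and the paper's pass through continuous test functions near $C$ and invoke the compactness of $M$, but the logic runs in opposite directions. The paper fixes the limit value $\lambda = \limsup_{L \in P}\sup_{\phi \in M}\phi(Z\without L)$, chooses for each $f$ in the family $F = \{f \in \mathscr{K}(Z) \with 0 \leq f \leq 1,\ C \subset \interior\{f = 1\}\}$ a maximizing measure $\psi_f \in M$ with $\psi_f(f) \geq \lambda$, and then uses the finite-intersection property on the closures $\closure\{\psi_g \with g \in F,\ g \leq f\}$ to extract a single cluster point $\phi \in M$ satisfying $\phi(f) \geq \lambda$ for every $f \in F$; since $\inf_{f \in F}\phi(f) = \phi(C) = 0$, this forces $\lambda \leq 0$. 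You instead spend the hypothesis $\phi(C) = 0$ up front to produce, for each $\phi$, a good function $w_\phi$, and then use compactness (via the closed sets $N_w$) to take a finite minimum and pass back to a compact $L$. Your approach is more constructive and avoids the net/cluster-point machinery; the paper's approach isolates a single ``worst'' measure and is perhaps more conceptual. Either way the essential insight---that one must detour through $\mathscr{K}(Z)$ because $\phi \mapsto \phi(Z\without L)$ has the wrong semicontinuity---is the same, and you identified it clearly.
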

\begin{proof}
    Let $\lambda = \limsup_{L \in P} \sup \{ \phi(Z \without L) \with \phi \in M \}$ and let $F$ consist of those $f \in \mathscr{K}(Z)$ satisfying $0 \leq f \leq 1$, and $C \subset \interior \{ z \with f(z) = 1 \}$. Since $M$ is compact, whenever $f \in F$, there exists $\psi_f \in M$ such that
    \[
        \psi_f(f) \geq \phi(f) \quad \text{whenever $\phi \in M$};
    \]
    furthermore, letting $L = Z \without \interior\{ z \with f(z) = 1 \}$, we have $L \in P$ and $\psi_f(f) \geq \phi(Z \without L)$ whenever $\phi \in M$, hence $\psi_f(f) \geq \lambda$. Observe that $\{ \closure \{ \psi_g \with f \geq g \in F \} \with f \in F\}$ forms a family of closed subsets of $M$ that possesses the finite intersection property, then there exists
    \[
        \phi \in \bigcap_{f \in F} \closure \{ \psi_g \with f \geq g \in F \}.
    \]
    Since $\psi_g(f) \geq \psi_g(g) \geq \lambda$ and $f \mapsto \psi(f)$ for $\psi \in M$ is continuous whenever $f \geq g \in F$, we conclude $\phi(f) \geq \lambda$ whenever $f \in F$, thus $\lambda \leq \inf \{ \phi(f) \with f \in F \} = \phi(C) = 0$.
\end{proof}


\begin{lemma}
    \label{lemma: compactness equivalence}
    Suppose $X$ and $Y$ are locally compact Hausdorff spaces, $Z$ is the one-point compactification of $Y$, and $M$ is a family of Radon measures over $X \times Z$. Then, the following statements are equivalent.
    \begin{enumerate}
        \item 
            \label{lemma: compactness equivalence 1}
            $\closure M$ is compact and $\phi(K \times (Z \without Y)) = 0$ whenever $K$ is a compact subset of $X$ and $\phi \in \closure M$.
        \item 
            \label{lemma: compactness equivalence 2}
            $\sup \{ \phi(K \times Z) \with \phi \in M \} < \infty$ whenever $K$ is a compact subset of $X$ and
            \[
                \lim_{L \in P} \sup \{ \phi(K \times (Z \without L)) \with \phi \in M \} = 0,
            \]
            where $P$ is the family of compact subsets of $Y$ directed by the order $\preceq$ such that $L_1 \preceq L_2$ if and only if $L_1 \subset L_2$ for $L_1, L_2 \in P$.
    \end{enumerate}
\end{lemma}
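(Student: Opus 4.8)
The plan is to prove the two implications separately. In each direction one localises in the first factor by a cut-off $\alpha\in\mathscr{K}(X)$ with $0\le\alpha\le1$ and exploits that $Z$ is compact while $Z\without Y$ is a closed (one-point) subset equal to the intersection of the directed decreasing family of open sets $Z\without L$, $L\in P$. Throughout, $p$ and $q$ denote the projections of $X\times Z$ onto $X$ and $Z$, and we may assume $M\neq\emptyset$, the case $M=\emptyset$ being trivial.

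For $(1)\Rightarrow(2)$, fix a compact $K\subset X$ and choose $\alpha$ with $\alpha=1$ on $K$. For $\phi\in\closure M$ let $\nu_\phi$ be the Radon measure over $Z$ representing the positive linear functional $\beta\mapsto\phi\bigl((\alpha\circ p)(\beta\circ q)\bigr)$ on $\mathscr{K}(Z)$, so that $\nu_\phi=q_\#\bigl((\alpha\circ p)\,\phi\bigr)$; since $(\alpha\circ p)(\beta\circ q)\in\mathscr{K}(X\times Z)$ for each $\beta\in\mathscr{K}(Z)$, the map $\phi\mapsto\nu_\phi$ is weakly continuous, so $N=\{\nu_\phi\with\phi\in\closure M\}$ is a nonempty compact family of Radon measures over $Z$. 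Statement (1) gives $\nu_\phi(Z\without Y)=0$ for every $\phi\in\closure M$: for $L\in P$, picking by Urysohn a function $\beta_L\in\mathscr{K}(Z)$ with $0\le\beta_L\le1$, $\beta_L=0$ on $L$, and $\beta_L=1$ on $Z\without Y$, one has $\nu_\phi(Z\without Y)\le\nu_\phi(\beta_L)\le\phi\bigl(\spt\alpha\times(Z\without L)\bigr)$, and the last quantity tends to $\phi\bigl(\spt\alpha\times(Z\without Y)\bigr)=0$ as $L$ increases in $P$, since $\phi(\spt\alpha\times Z)<\infty$. Applying \ref{lemma: compactness implies uniform smallness at infinity} with the compact space $Z$, the closed set $C=Z\without Y$, and the family $N$ yields $\lim_{L\in P}\sup\{\nu_\phi(Z\without L)\with\phi\in\closure M\}=0$. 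Finally $\alpha\ge\chi_K$ gives $\phi(K\times(Z\without L))\le\nu_\phi(Z\without L)$ and $\phi(K\times Z)\le\nu_\phi(Z)=\phi(\alpha\circ p)$ for $\phi\in\closure M$; as $\phi\mapsto\phi(\alpha\circ p)$ is continuous on the compact set $\closure M$, this yields both $\sup\{\phi(K\times Z)\with\phi\in M\}<\infty$ and $\lim_{L\in P}\sup\{\phi(K\times(Z\without L))\with\phi\in M\}=0$, i.e.\ (2).

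For $(2)\Rightarrow(1)$, the first point is that $\closure M$ is compact. By cofinality of the sets $K\times Z$ among the compact subsets of $X\times Z$, the topology of $\mathscr{K}(X\times Z)$ is the locally convex final topology induced by the inclusions of the Banach spaces $\mathscr{K}_{K\times Z}(X\times Z)$ for compact $K\subset X$; for a positive Radon measure $\phi$ the restriction of $\phi$ to $\mathscr{K}_{K\times Z}(X\times Z)$ has norm at most $\phi(K\times Z)$, and by the first clause of (2) these norms are bounded uniformly over $M$ for each $K$. Hence the weak closure of $M$ inside $\prod_{\psi\in\mathscr{K}(X\times Z)}\mathbf{R}$ is compact by Tychonoff's theorem, and the uniform bounds force each limit point to be a continuous linear functional on $\mathscr{K}(X\times Z)$ which, being a pointwise limit of positive functionals, is positive, hence a Radon measure; thus $\closure M$ is compact. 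For the vanishing assertion, fix a compact $K\subset X$, a measure $\phi\in\closure M$, and $\varepsilon>0$; choose $\alpha\in\mathscr{K}(X)$ with $\alpha=1$ on $K$, use the second clause of (2) to choose $L\in P$ with $\sup\{\phi'(\spt\alpha\times(Z\without L))\with\phi'\in M\}<\varepsilon$, and use Urysohn on $Z$ to choose $\beta\in\mathscr{K}(Z)$ with $0\le\beta\le1$, $\beta=0$ on $L$, and $\beta=1$ on $Z\without Y$. Then $\psi=(\alpha\circ p)(\beta\circ q)\in\mathscr{K}(X\times Z)$ satisfies $\chi_{K\times(Z\without Y)}\le\psi\le\chi_{\spt\alpha\times(Z\without L)}$; since $\{\mu\with\mu(\psi)\le\varepsilon\}$ is weakly closed and contains $M$, it contains $\phi$, so $\phi(K\times(Z\without Y))\le\phi(\psi)\le\varepsilon$. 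Letting $\varepsilon\to0$ gives $\phi(K\times(Z\without Y))=0$.

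The main obstacle is the compactness assertion in $(2)\Rightarrow(1)$: one must argue through the non-metrizable inductive-limit structure of $\mathscr{K}(X\times Z)$ to see that the pointwise limits produced by the uniform operator bounds are genuinely continuous on $\mathscr{K}(X\times Z)$, hence represented by Radon measures, rather than merely pointwise-defined linear forms. The remaining ingredients — the Urysohn sandwich for $\psi$ and the monotone-convergence passage $\phi(\spt\alpha\times(Z\without L))\to\phi(\spt\alpha\times(Z\without Y))$ — are routine, but at every step one uses the finiteness $\phi(\spt\alpha\times Z)<\infty$, which is precisely why localising by $\alpha$ is needed.
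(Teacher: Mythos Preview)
Your proof is correct and follows essentially the same route as the paper: in both directions you localise in $X$ by a cut-off, and in $(1)\Rightarrow(2)$ you reduce to Lemma~\ref{lemma: compactness implies uniform smallness at infinity} on the compact space $Z$ with $C=Z\without Y$, while in $(2)\Rightarrow(1)$ you use a Urysohn sandwich $\chi_{K\times(Z\without Y)}\le\psi\le\chi_{\spt\alpha\times(Z\without L)}$ together with the weak closedness of $\{\mu:\mu(\psi)\le\varepsilon\}$. The only notable differences are cosmetic: the paper restricts to $G\times Z$ and applies Lemma~\ref{lemma: compactness implies uniform smallness at infinity} there, whereas you push forward to $Z$ via $\nu_\phi=q_\#((\alpha\circ p)\phi)$, which is slightly cleaner; and the paper omits the compactness argument for $\closure M$ in $(2)\Rightarrow(1)$ (taking it as standard from the uniform bounds $\sup_{\phi\in M}\phi(K\times Z)<\infty$), whereas you spell out the Tychonoff/Banach--Alaoglu step explicitly. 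One minor point: your verification that $\nu_\phi(Z\without Y)=0$ via the $\beta_L$ argument is unnecessary, since $\nu_\phi(Z\without Y)\le\phi(\spt\alpha\times(Z\without Y))=0$ directly from hypothesis~(1).
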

\begin{proof}
    To show \eqref{lemma: compactness equivalence 1} implies \eqref{lemma: compactness equivalence 2}, we observe that for a compact subset $K$ of $X$ and a compact neighborhood $G$ of $K$, there exists $f \in \mathscr{K}_{G \times Z}(X \times Z)^+$ such that $0 \leq f \leq 1$ and $K \times Z \subset \{ x \with f(x) = 1 \}$, then mapping $\phi \in \closure M$ onto the finite Radon measure $(\phi \restrict f) | \mathbf{2}^{G \times Z}$ over $G \times Z$ defines a continuous map; from \ref{lemma: compactness implies uniform smallness at infinity}, we infer 
    \[
        \lim_{L \in P} \sup \{ \phi(K \times (Z \without L)) \with \phi \in M \} = 0.
    \]
    
    Conversely, for a compact subset $K$ of $X$, a compact neighborhood $G$ of $K$, and $L \in P$, there exists $f_L \in \mathscr{K}(X \times Z)$ such that 
    \[
        0 \leq f_L \leq 1, \quad K \times (Z \without Y) \subset \interior \{ x \with f_L(x) = 1 \}, \quad \spt f_L \subset G \times (Z \without L);
    \]
    it follows that
    \[
        \psi(K \times (Z \without Y)) \leq \psi(f_L) \leq \sup \{ \phi(G \times (Z \without L)) \with \phi \in M \},
    \]
    hence $\psi(K \times (Z \without Y)) = 0$ whenever $\psi \in \closure M$.
\end{proof}


\begin{lemma}
    \label{lemma: embedding of compact family of measures}
    Suppose $X$ and $Y$ are locally compact Hausdorff spaces, $Z$ is the one-point compactification of $Y$, $\iota: X \times Y \to X \times Z$ is the inclusion map, and the continuous monomorphism $j: \mathscr{K}(X \times Y) \to \mathscr{K}(X \times Z)$ is given by extension by zero to $X \times Z$. Then, there hold the following statements.
    \begin{enumerate}
        \item 
            \label{lemma: embedding of compact family of measures 1}
            If $X$ is a countable union of compact subsets and $\psi$ is a Radon measure over $X \times Y$ such that $\psi(K \times Y) < \infty$ whenever $K$ is a compact subset of $X$, then $\iota_\# \psi$ is a Radon measure over $X \times Z$.
        \item 
            \label{lemma: embedding of compact family of measures 2}
            If $\phi$ is a Radon measure over $X \times Z$ such that $\phi(K \times (Z \without Y)) = 0$ whenever $K$ is a compact subset of $X$ and $\psi$ is the Radon measure representing $j^*(\phi)$, then we have $\psi = \phi | \mathbf{2}^{X \times Y}$ and $\phi$ is the Radon measure representing the member in $\mathscr{K}(X \times Z)^*$ induced by $\iota_\# \psi$. 
        \item 
            \label{lemma: embedding of compact family of measures 3}
             Suppose $M$ is a compact family of Radon measures $\phi$ over $X \times Z$ such that $\phi(K \times (Z \without Y)) = 0$ whenever $K$ is a compact subset of $X$. The dual map $j^*: \mathscr{K}(X \times Z)^* \to \mathscr{K}(X \times Y)^*$ embeds $M$ homeomorphically into $\mathscr{K}(X \times Y)^*$.
    \end{enumerate}
\end{lemma}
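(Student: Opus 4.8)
The plan is to treat the three assertions in order: \eqref{lemma: embedding of compact family of measures 2} carries the measure-theoretic content, \eqref{lemma: embedding of compact family of measures 1} reduces to one regularity estimate, and \eqref{lemma: embedding of compact family of measures 3} is then formal.

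For \eqref{lemma: embedding of compact family of measures 1}, I would verify the conditions of \cite[2.2.5]{Federer69_MR0257325} directly. The image measure $\iota_\#\psi$ is a measure over $X\times Z$; it is finite on compact sets, because any compact $C\subset X\times Z$ lies in $K\times Z$ where $K$ is the (compact) image of $C$ under the projection onto $X$, so $\iota_\#\psi(C)\le\psi(K\times Y)<\infty$; and since $X\times Y$ is open in $X\times Z$, inner regularity by compact sets on open subsets of $X\times Z$ is inherited from that of $\psi$. The one point that requires work -- and the only place the hypothesis on $X$ is used -- is outer regularity. Write $X=\bigcup_{n}X_n$ with each $X_n$ compact and $X_n\subset\interior X_{n+1}$, and let $A\subset X\times Z$ and $\varepsilon>0$ be given. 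Using the Radon property of $\psi$, cover $A\cap(X\times Y)$ by an open subset $V_0$ of $X\times Y$ with $\psi(V_0)\le\iota_\#\psi(A)+\varepsilon/2$. To cover the ``fibre at infinity'' $X\times(Z\without Y)$, apply inner regularity of $\psi$ to the open set $\interior X_{n+1}\times Y$, which has finite $\psi$-measure, to obtain a compact $C_n\subset\interior X_{n+1}\times Y$ with $\psi\big((\interior X_{n+1}\times Y)\without C_n\big)\le\varepsilon 2^{-n-1}$; writing $L_n$ for the image of $C_n$ in $Y$, the set $U_n=Z\without L_n$ is an open neighbourhood of the point at infinity with $\psi\big(\interior X_{n+1}\times(U_n\cap Y)\big)\le\varepsilon 2^{-n-1}$, and $\bigcup_n\interior X_{n+1}\times U_n\supset X\times(Z\without Y)$. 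Then $V=V_0\cup\bigcup_n(\interior X_{n+1}\times U_n)$ is open, contains $A$, and $\iota_\#\psi(V)\le\iota_\#\psi(A)+\varepsilon$. Borel regularity of $\iota_\#\psi$ follows by intersecting a decreasing sequence of such open sets.

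For \eqref{lemma: embedding of compact family of measures 2}, since $j$ maps $\mathscr{K}(X\times Y)^+$ into $\mathscr{K}(X\times Z)^+$, the functional $j^*(\phi)$ is a positive member of $\mathscr{K}(X\times Y)^*$ and hence is represented by a unique Radon measure, by \cite[2.5.13, 2.5.14]{Federer69_MR0257325}. For $g\in\mathscr{K}(X\times Y)$ the function $j(g)$ is supported in $\spt g$, a compact -- hence $X\times Z$-closed -- subset of $X\times Y$ disjoint from $X\times(Z\without Y)$, so $j(g)$ vanishes on $X\times(Z\without Y)$ and $\langle g,j^*(\phi)\rangle=\int j(g)\ud\phi=\int g\ud(\phi|\mathbf{2}^{X\times Y})$; since $X\times Y$ is open in $X\times Z$, the measure $\phi|\mathbf{2}^{X\times Y}$ is Radon over $X\times Y$, so by the uniqueness clause of the Riesz representation $\psi=\phi|\mathbf{2}^{X\times Y}$. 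For the second assertion, $\iota_\#\psi=\phi\restrict(X\times Y)$ as measures over $X\times Z$, so the member of $\mathscr{K}(X\times Z)^*$ it induces sends $f\in\mathscr{K}(X\times Z)$ to $\int_{X\times Y}f\ud\phi$; as $f$ has compact support and $\phi(K\times(Z\without Y))=0$ for every compact $K\subset X$, the contribution over $X\times(Z\without Y)$ vanishes and this equals $\int f\ud\phi$. Hence this member is $\phi$ itself, which is the Radon measure representing it.

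Finally, \eqref{lemma: embedding of compact family of measures 3} follows formally. The transpose $j^*$ is continuous for the weak topologies, since $\phi\mapsto\langle g,j^*(\phi)\rangle=\langle j(g),\phi\rangle$ is continuous for each fixed $g\in\mathscr{K}(X\times Y)$. It is injective on $M$: if $\phi_1,\phi_2\in M$ satisfy $j^*(\phi_1)=j^*(\phi_2)$, then \eqref{lemma: embedding of compact family of measures 2} gives $\phi_1|\mathbf{2}^{X\times Y}=\phi_2|\mathbf{2}^{X\times Y}$, and, again by \eqref{lemma: embedding of compact family of measures 2}, each $\phi_i$ is the Radon measure representing the member of $\mathscr{K}(X\times Z)^*$ induced by $\iota_\#(\phi_i|\mathbf{2}^{X\times Y})$, which depends only on $\phi_i|\mathbf{2}^{X\times Y}$; hence $\phi_1=\phi_2$. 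A continuous injection from the compact space $M$ into the Hausdorff space $\mathscr{K}(X\times Y)^*$ is a homeomorphism onto its image. I expect the outer-regularity estimate in \eqref{lemma: embedding of compact family of measures 1} to be the only real obstacle; the rest is bookkeeping around the Riesz representation and the observation that $Z\without Y$ is a single closed point whose products with compact subsets of $X$ are $\phi$-null.
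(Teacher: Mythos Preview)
Your proof is correct and follows essentially the same strategy as the paper. For \eqref{lemma: embedding of compact family of measures 1} both arguments hinge on the same outer-regularity estimate: use the countable compact exhaustion of $X$ together with inner regularity of $\psi$ on the finite-measure sets $\interior X_{n+1}\times Y$ to produce an open neighbourhood of $X\times(Z\without Y)$ of arbitrarily small $\iota_\#\psi$-measure, then combine with outer regularity of $\psi$ on $A\cap(X\times Y)$. Your treatment of \eqref{lemma: embedding of compact family of measures 2} is marginally more direct than the paper's---you observe $\iota_\#\psi=\phi\restrict(X\times Y)$ as outer measures and conclude immediately, whereas the paper first establishes the identity $\int f\ud(\iota_\#\psi)=\int f\ud\phi$ for $f$ supported in $X\times Y$ and then approximates general $f\in\mathscr{K}(X\times Z)$ by such functions via a cutoff in the $Y$-direction---but the content is the same. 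For \eqref{lemma: embedding of compact family of measures 3} both proofs invoke the compact-to-Hausdorff principle after using \eqref{lemma: embedding of compact family of measures 2} to verify injectivity of $j^*|M$.
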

\begin{proof}
    To prove \eqref{lemma: embedding of compact family of measures 1}, we first infer from \cite[2.1.2]{Federer69_MR0257325} that every open subset of $X \times Z$ is $\iota_\# \psi$ measurable. Next, noting that $X \times Y$ is an open subset of $X \times Z$, we have 
    \begin{align*}
        (\iota_\# \psi)(W) 
        &= \psi(W \cap (X \times Y)) \\
        &= \sup \{ \psi(C) \with \text{$C$ is compact, $C \subset W \cap (X \times Y)$} \} \\
        &\leq \sup \{ (\iota_\# \psi)(C) \with \text{$C$ is compact, $C \subset W$} \}
    \end{align*}
    whenever $W$ is an open subset of $X \times Z$. Let $\varepsilon > 0$ and let $K_i$ be a sequence of compact subsets of $X$ such that $X = \bigcup_{i=1}^\infty K_i$ and $K_{i} \subset \interior K_{i+1}$ for $i \in \mathscr{P}$. Since $\psi$ is a Radon measure and $\psi(K_i \times Y) < \infty$, there exists a sequence of compact subsets $L_i$ of $Y$ such that
    \[
        \psi(K_i \times (Y \without L_i)) < 2^{-i} \varepsilon.
    \]
    Then $W = \bigcup_{i=1}^\infty (\interior K_i) \times (Z \without L_i)$ is an open superset of $X \times (Z \without Y)$ such that $(\iota_\# \psi)(W) < \varepsilon$. For $A \subset X \times Z$, writing $A = (A \cap (X \times Y)) \cup (A \cap (X \times (Z \without Y)))$, from the estimate above and the fact that $\psi$ is a Radon measure, we conclude 
    \[
        (\iota_\# \psi)(A) = \inf \{ (\iota_\# \psi)(W) \with \text{$W$ is open, $A \subset W$} \}
    \]
    whenever $A \subset X \times Z$.

    As $j$ is continuous, so is $j^*$. To prove \eqref{lemma: embedding of compact family of measures 2}, we compute by \cite[2.4.18]{Federer69_MR0257325} that for $f \in \mathscr{K}(X \times Z)$ with $\spt f \subset X \times Y$,
    \[
        \textstyle\int f \ud (\iota_\# \psi) = \textstyle\int f \circ \iota \ud \psi = j^*(\phi)(f \circ \iota) = \phi(j(f \circ \iota)) = \textstyle\int f \ud \phi.
    \]
    Then, the first conclusion of \eqref{lemma: embedding of compact family of measures 2} follows. For arbitrary $f \in \mathscr{K}(X \times Z)$, letting $K$ be the image of projection of $\spt f$ onto $X$, there exists a compact subset $L$ of $Y$ such that $\phi(K \times (Z \without L))$ and $(\iota_\# \psi)(K \times (Z \without L))$ are small;  therefore, it allows us to approximate the integrals of $f$ with the integrals of those functions whose supports are contained in $X \times Y$, and the second conclusion of \eqref{lemma: embedding of compact family of measures 2} follows. By \eqref{lemma: embedding of compact family of measures 2}, $j^*|M$ is injective. Recalling that every continuous bijection from a compact space into a Hausdorff space is a homeomorphism, \eqref{lemma: embedding of compact family of measures 3} follows.
\end{proof}


\begin{theorem}[Compactness]
    \label{theorem: compactness of graph measures}
    Suppose $X$ and $Y$ are locally compact Hausdorff spaces, $X$ is a countable union of compact subsets, and $M$ is a class of Radon measures over $X \times Y$ satisfying
    \begin{gather*}
        \label{theorem: compactness of graph measures 1}
        \sup\{ \Gamma(K \times Y) \with \Gamma \in M \} < \infty, \\
        \label{theorem: compactness of graph measures 2}
        \lim_{\Gamma \in P} \sup \{ \Gamma(K \times (Y \without L)) \with \Gamma \in M \} = 0,
    \end{gather*}
    whenever $K$ is a compact subset of $X$, where $P$ is the family of compact subsets of $Y$ directed by the order $\preceq$ such that $L_1 \preceq L_2$ if and only if $L_1 \subset L_2$ for $L_1, L_2 \in P$.
    
    Then, $\closure M$ is compact, the push-forward $p_{\#}$ maps $\closure M$ continuously into $\mathscr{K}(X)^*$, and $p_\# \Gamma$ is a Radon measure over $X$ whenever $\Gamma \in \closure M$, where $p: X \times Y \to X$ is the projection map.
\end{theorem}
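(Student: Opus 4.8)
The plan is to pass to the one-point compactification $Z$ of $Y$: on $X \times Z$ the projection $q \colon X \times Z \to X$ is proper and $\beta \circ q \in \mathscr{K}(X \times Z)$ whenever $\beta \in \mathscr{K}(X)$, whereas on $X \times Y$ the function $\beta \circ p$ has the non-compact support $\spt \beta \times Y$; this gap is exactly what the smallness-at-infinity hypothesis is designed to bridge. Let $\iota \colon X \times Y \to X \times Z$ be the inclusion and $j \colon \mathscr{K}(X \times Y) \to \mathscr{K}(X \times Z)$ the extension-by-zero monomorphism of \ref{lemma: embedding of compact family of measures}. Since $X$ is a countable union of compact sets and $\Gamma(K \times Y) < \infty$ for every compact $K \subset X$ and $\Gamma \in M$, part \eqref{lemma: embedding of compact family of measures 1} of that lemma shows each $\iota_\# \Gamma$ is a Radon measure over $X \times Z$; set $N = \closure \{ \iota_\# \Gamma \with \Gamma \in M \}$, the closure being taken in $\mathscr{K}(X \times Z)^*$.

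First I would check that $\{ \iota_\# \Gamma \with \Gamma \in M \}$ satisfies condition \eqref{lemma: compactness equivalence 2} of \ref{lemma: compactness equivalence}: for compact $K \subset X$ and compact $L \subset Y$ one computes $(\iota_\# \Gamma)(K \times Z) = \Gamma(K \times Y)$ and $(\iota_\# \Gamma)(K \times (Z \without L)) = \Gamma(K \times (Y \without L))$, so both the uniform bound and the smallness-at-infinity assumption transfer verbatim. The implication \eqref{lemma: compactness equivalence 2} $\Rightarrow$ \eqref{lemma: compactness equivalence 1} of \ref{lemma: compactness equivalence} then yields that $N$ is compact and that $\phi(K \times (Z \without Y)) = 0$ whenever $K \subset X$ is compact and $\phi \in N$.

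Next I would identify $\closure M$ with $j^*[N]$. By \ref{lemma: embedding of compact family of measures}\eqref{lemma: embedding of compact family of measures 3} the dual map $j^* \colon \mathscr{K}(X \times Z)^* \to \mathscr{K}(X \times Y)^*$ restricts to a homeomorphism of $N$ onto $j^*[N]$, and by \eqref{lemma: embedding of compact family of measures 2} the Radon measure representing $j^*(\phi)$ is $\phi \restrict \mathbf{2}^{X \times Y}$; in particular $j^*(\iota_\# \Gamma)$ is represented by $\Gamma$ for $\Gamma \in M$, and, by uniqueness of the Radon measure representing a positive element of $\mathscr{K}(X \times Z)^*$, every $\phi \in N$ equals $\iota_\#(\phi \restrict \mathbf{2}^{X \times Y})$. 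Since $j^*|N$ is a homeomorphism and $N = \closure \{ \iota_\# \Gamma \with \Gamma \in M \}$, its image is the closure within $j^*[N]$ of $\{ j^*(\iota_\# \Gamma) \with \Gamma \in M \} = M$; because $j^*[N]$ is compact, hence closed in the Hausdorff space $\mathscr{K}(X \times Y)^*$, and contains $M$, this forces $\closure M = j^*[N]$. Thus $\closure M$ is compact, and $\Gamma \mapsto \iota_\# \Gamma$ is the inverse homeomorphism $\closure M \to N$ — meaningful for $\Gamma \in \closure M$ since then $\Gamma = \phi \restrict \mathbf{2}^{X \times Y}$ for the corresponding $\phi \in N$, whence $\Gamma(K \times Y) = \phi(K \times Y) < \infty$.

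Finally, fix $\Gamma \in \closure M$ and let $\phi \in N$ be such that $j^*(\phi)$ is represented by $\Gamma$, so $\phi = \iota_\# \Gamma$ by the above. The map $q$ is proper, since $q^{-1}[K] = K \times Z$ is compact for compact $K \subset X$; hence $p_\# \Gamma = (q \circ \iota)_\# \Gamma = q_\# \phi$ is a Radon measure over $X$. For continuity, given $\beta \in \mathscr{K}(X)$ we have $\beta \circ q \in \mathscr{K}(X \times Z)$, as $\spt(\beta \circ q) \subset \spt \beta \times Z$ is compact, and \cite[2.4.18]{Federer69_MR0257325} gives
\[
    (p_\# \Gamma)(\beta) = \textstyle\int \beta \circ p \ud \Gamma = \textstyle\int (\beta \circ q) \circ \iota \ud \Gamma = \textstyle\int \beta \circ q \ud \phi ;
\]
therefore $\Gamma \mapsto (p_\# \Gamma)(\beta)$ is the composite of the continuous map $\closure M \to N$, $\Gamma \mapsto \phi$, with the evaluation $\phi \mapsto \int \beta \circ q \ud \phi$, which is one of the maps defining the weak topology on $\mathscr{K}(X \times Z)^*$; being continuous for each $\beta \in \mathscr{K}(X)$, $p_\#$ maps $\closure M$ continuously into $\mathscr{K}(X)^*$. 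I expect the delicate part to be the identification $\closure M = j^*[N]$ — making the detour through $Z$ and back rigorous, including the uniqueness step for representing Radon measures — while the genuine analytic content, the conversion of the smallness-at-infinity hypothesis into the vanishing $\phi(K \times (Z \without Y)) = 0$, is already packaged in \ref{lemma: compactness equivalence}.
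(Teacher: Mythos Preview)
Your proof is correct and follows essentially the same route as the paper: pass to the one-point compactification $Z$, push $M$ forward to $M' = \{\iota_\# \Gamma \with \Gamma \in M\}$ via \ref{lemma: embedding of compact family of measures}\eqref{lemma: embedding of compact family of measures 1}, invoke \ref{lemma: compactness equivalence} to get compactness of $\closure M'$ and vanishing on $K \times (Z \without Y)$, and then pull back through $j^*$ using \ref{lemma: embedding of compact family of measures}\eqref{lemma: embedding of compact family of measures 2}\eqref{lemma: embedding of compact family of measures 3}. Your identification $\closure M = j^*[N]$ and your continuity argument for $p_\#$ are spelled out in more detail than the paper's, but the content is the same; the paper simply writes $\closure M = j^*[\closure M']$ and cites \cite[2.2.17]{Federer69_MR0257325} in place of your properness observation for $q$.
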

\begin{proof}
    Let $Z$ be the one-point compactification of $Y$, let $\iota: X \times Y \to X \times Z$ be the inclusion map, let $j$ be as in \ref{lemma: embedding of compact family of measures}, and let $M' = \{ \iota_\# \Gamma \with \Gamma \in M \}$. From \ref{lemma: embedding of compact family of measures}\eqref{lemma: embedding of compact family of measures 1}, the members of $M'$ are Radon measures over $X \times Z$. Applying \ref{lemma: compactness equivalence} with $M$ replaced by $M'$, we see that $\closure M'$ is a compact family of Radon measures $\phi$ over $X \times Z$ satisfying $\phi(K \times (Z \without Y)) = 0$ whenever $K$ is a compact subset of $X$;  thus, $\closure M = j^*[\closure M']$ is compact and $\iota_\# \circ j^* | \closure M' = \mathbf{1}_{\closure M'}$ by \ref{lemma: embedding of compact family of measures}\eqref{lemma: embedding of compact family of measures 2}\eqref{lemma: embedding of compact family of measures 3} applied with $M$ replaced by $\closure M'$. By \cite[2.2.17]{Federer69_MR0257325}, $q_\# (\iota_\# \Gamma) = p_\# \Gamma$ is a Radon measure over $X$ whenever $\Gamma \in \closure M$, where $q: X \times Z \to X$ is the projection map, and the continuity of $p_\#$ follows from the continuity of $(j^* | \closure M')^{-1}$.
\end{proof}


\begin{remark}
    \label{remark: stronger convergence}
    The proof also shows that $\iota_\#$ maps $\closure M$ continuously into $\mathscr{K}(X \times Z)^*$. In particular, if $X \times Y$ is second-countable (and hence metrizable by \cite[Chapter 4, Theorem 16]{Kelley75_MR0370454}) and $M$ consists of a convergent sequence $\Gamma_i$ and $\Gamma = \lim_{i \to \infty} \Gamma_i$, then we have $\iota_\# \Gamma = \lim_{i \to \infty} \iota_\# \Gamma_i$. It follows from \cite[1.62]{AFP00_MR1857292} that
    \[
        \lim_{i \to \infty} \textstyle\int g \ud \Gamma_i = \textstyle\int g \ud \Gamma
    \]
    for any bounded Borel function $g$ such that $\closure p[\spt g]$ is compact and the set of discontinuity points of g has $\Gamma$ measure zero.
\end{remark}


Next, we shall present a disintegration theorem that suits our setting, see also \cite[2.28]{AFP00_MR1857292}.


\begin{theorem}
    \label{theorem: disintegration}
    Suppose $X$ and $Y$ are second-countable locally compact Hausdorff spaces, $p: X \times Y \to X$ is the projection map, and $\Gamma$ is a Radon measure on $X \times Y$. If $\Gamma(K \times Y) < \infty$ for every compact subset $K$ of $X$, then $p_\# \Gamma$ is a Radon measure over $X$ and there exists a $p_{\#}\Gamma$ Young function $f$ of type $Y$ such that $\Gamma = \mathbf{Y}(p_{\#}\Gamma, f)$. Moreover, such a function $f$ is $p_\# \Gamma$ almost unique.
\end{theorem}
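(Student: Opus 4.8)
The plan is to follow the classical disintegration approach (cf.\ \cite[2.28]{AFP00_MR1857292}), adapted to the setting where $\Gamma$ is only locally finite in the $X$-direction. First I would observe that $p_\#\Gamma$ is a Radon measure over $X$: since $X$ is second-countable and hence a countable union of compact subsets, and $\Gamma(K \times Y) < \infty$ for compact $K$, this follows from \ref{theorem: compactness of graph measures} applied to $M = \{\Gamma\}$ (or directly from \cite[2.2.17]{Federer69_MR0257325} via the one-point compactification as in the proof of that theorem). Write $\mu = p_\#\Gamma$. The goal is to produce, for $\mu$ almost every $x$, a probability Radon measure $f(x)$ over $Y$ such that $x \mapsto \int \beta \ud f(x)$ is $\mu$ measurable for each $\beta \in \mathscr{K}(Y)$ and such that $\int \psi \ud \Gamma = \iint \psi(x,y) \ud f(x)\,y \ud \mu\,x$ for all $\psi \in \mathscr{K}(X \times Y)$.

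The construction proceeds as follows. Fix a countable dense $\mathbf{Q}$-linear subspace $D \subset \mathscr{K}(Y)$ (possible since $\mathscr{K}(Y)$ is separable when $Y$ is second-countable locally compact). For each $\beta \in \mathscr{K}(Y)$, the measure $p_\#(\Gamma \restrict (x,y) \mapsto \beta(y))$, more precisely the functional $\psi \mapsto \int \psi(x)\beta(y) \ud\Gamma(x,y)$ on $\mathscr{K}(X)$, is absolutely continuous with respect to $\mu$ — indeed its total variation over a compact $K$ is bounded by $(\sup|\beta|)\,\mu(K)$ — so by Radon--Nikodym (e.g.\ \cite[2.5.12]{Federer69_MR0257325} or the differentiation theory in \cite[2.8, 2.9]{Federer69_MR0257325}) there is a $\mu$ measurable function $x \mapsto \lambda_\beta(x)$ with $\int_A \lambda_\beta \ud\mu = \int (\chi_A \circ p)\,\beta(y)\ud\Gamma$ for all $\mu$ measurable $A$. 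Choosing these densities consistently over the countable set $D$, there is a $\mu$-conull set on which $\beta \mapsto \lambda_\beta(x)$ is $\mathbf{Q}$-linear, satisfies $|\lambda_\beta(x)| \le (\sup|\beta|)\,c(x)$ for a suitable finite $c(x)$ (obtained by the same Radon--Nikodym argument applied with $X$ exhausted by compacts), is non-negative on $D \cap \mathscr{K}(Y)^+$, and has $\lambda_\beta(x) = 1$ whenever $\beta$ is chosen from a fixed sequence approximating $1$ on an exhaustion of $Y$ — the last point using that the fibres of $\Gamma$ are probability-normalized \emph{after} dividing by $\mu$, which is exactly the content $p_\#\Gamma = \mu$. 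Extend $\beta \mapsto \lambda_\beta(x)$ by continuity to all of $\mathscr{K}(Y)$; it is a non-negative Daniell integral of mass $1$, hence by \cite[2.5.13, 2.5.14]{Federer69_MR0257325} represented by a unique $f(x) \in \mathbf{P}(Y)$. Measurability of $x \mapsto \int\beta\ud f(x)$ for all $\beta \in \mathscr{K}(Y)$ holds on $D$ by construction and passes to the closure, so $f$ is a $\mu$ Young function of type $Y$ by \ref{remark: measurability criterion}.

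It then remains to verify $\Gamma = \mathbf{Y}(\mu, f)$ and uniqueness. For the identity: both sides are Radon measures over $X \times Y$ by construction and by \ref{remark: graph measures well-defined}; by \ref{lemma: K(X) tensor K(Y) density} it suffices to test against $\psi(x,y) = \alpha(x)\beta(y)$ with $\alpha \in \mathscr{K}(X)$, $\beta \in \mathscr{K}(Y)$, where $\int \alpha(x)\beta(y)\ud\mathbf{Y}(\mu,f) = \int \alpha(x) \big(\int\beta\ud f(x)\big)\ud\mu\,x = \int\alpha(x)\lambda_\beta(x)\ud\mu\,x = \int\alpha(x)\beta(y)\ud\Gamma$ by the defining property of the densities $\lambda_\beta$; a density/approximation argument (together with \ref{lemma: extension of disintegration formula to integrable functions} if one wants the identity on a larger class) then gives equality of the two Radon measures. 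For $\mu$-almost-uniqueness: if $\mathbf{Y}(\mu, f) = \mathbf{Y}(\mu, g)$, then for each $\beta \in D$ one gets $\int\beta\ud f(x) = \int\beta\ud g(x)$ for $\mu$ almost all $x$ (by testing against $\alpha \in \mathscr{K}(X)$ and using that $\mu$ is the common marginal, so the "integrated" equality upgrades to a pointwise a.e.\ one via the Radon--Nikodym uniqueness), and intersecting the countably many conull sets over $\beta \in D$ and using density of $D$ in $\mathscr{K}(Y)$ yields $f(x) = g(x)$ as elements of $\mathbf{P}(Y)$ for $\mu$ almost all $x$.

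I expect the main obstacle to be the \textbf{simultaneous normalization and regularity of the densities}: making the countably many Radon--Nikodym derivatives $\lambda_\beta$ cohere on a single $\mu$-conull set so that $\beta \mapsto \lambda_\beta(x)$ is genuinely a probability Daniell integral — in particular controlling the total mass to equal exactly $1$ (not just $\le 1$) $\mu$-almost everywhere, which is where second-countability of $Y$ and the exhaustion by compacts enter crucially — and then checking that the continuous extension to all of $\mathscr{K}(Y)$ is still non-negative and normalized. The passage from the relation holding on the countable dense set $D$ to the full test-function space, both in the construction of $f(x)$ and in the verification of $\Gamma = \mathbf{Y}(\mu,f)$ and uniqueness, is routine once the coherent choice has been made.
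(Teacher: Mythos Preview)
Your proposal is correct and follows the classical ``density-by-density'' disintegration strategy (essentially the route of \cite[2.28]{AFP00_MR1857292}): build Radon--Nikodym derivatives $\lambda_\beta$ for each $\beta$ in a countable dense $D \subset \mathscr{K}(Y)$, force coherence on a single conull set, and extend. The paper proceeds differently: it first passes to the one-point compactification $Z$ of $Y$, pushes $\Gamma$ to a Radon measure on $X \times Z$, and then applies the vector-valued Riesz representation \cite[2.5.12]{Federer69_MR0257325} once with $E = \mathscr{K}(Z)$ (a separable \emph{Banach} space, which is why the compactification is made) to obtain a $\phi$-measurable $\mathscr{K}(Z)^*$-valued function $k$ with $|k(x)| = 1$; monotonicity then forces $k(x)(Z) = 1$, and one shows $k(x)(Z \setminus Y) = 0$ a.e.\ via \ref{lemma: extension of disintegration formula to integrable functions}, so $f = j^* \circ k$ lands in $\mathbf{P}(Y)$ with no separate normalization argument needed. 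The trade-off: the paper's route packages the ``coherent countable choice'' and the mass-$1$ normalization into a single invocation of 2.5.12 (at the cost of the compactification lemmas \ref{lemma: embedding of compact family of measures} already developed for the compactness theorem), whereas your route is more elementary---it needs only scalar Radon--Nikodym and a density argument---but you must do the bookkeeping you correctly flagged as the main obstacle, namely the exhaustion-of-$Y$ argument to pin the mass at exactly $1$ rather than $\le 1$. Note, incidentally, that since $\mu = p_\# \Gamma$ exactly, your bound simplifies to $|\lambda_\beta(x)| \le \sup|\beta|$ (i.e.\ $c(x) = 1$) $\mu$-a.e., which streamlines the normalization step.
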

\begin{proof}
    Suppose $Z$ is the one-point compactification of $Y$, $\iota: Y \to Z$ is the inclusion map, and $q: X \times Z \to X$ is the projection map. Then $Z$ is a second-countable compact Hausdorff space and by \ref{lemma: embedding of compact family of measures}\eqref{lemma: embedding of compact family of measures 1}, $\mu = (\mathbf{1}_X \times \iota)_\# \Gamma$ is a Radon measure over $X \times Z$; in particular, $p_\# \Gamma = q_\# \mu$ is a Radon measure by \cite[2.2.17]{Federer69_MR0257325}. Clearly, we have $\mu(X \times (Z \without Y)) = 0$.
    
    By \ref{lemma: exponential law}, the function $S: \mathscr{K}(X, \mathscr{K}(Z)) \simeq \mathscr{K}(X \times Z)$ given by 
    \[
        S(u)(x, y) = u(x)(y) \quad \text{for $u \in \mathscr{K}(X, \mathscr{K}(Z))$}
    \]
    is a linear isomorphism and apply \cite[2.5.12]{Federer69_MR0257325} with $E = \mathscr{K}(Z)$ endowed with the sup-norm topology (which is separable by \cite[2.2, 2.23]{Menne16a_MR3528825}), $L = \mathscr{K}(X)$, $\Omega = \mathscr{K}(X, \mathscr{K}(Z))$, $T = \mu \circ S$, then there exists a Radon measure $\phi$ over $X$ and a $\phi$ measurable function $k$ with values in $\mathscr{K}(Z)^*$ with respect to the weak topology such that $|k(x)| = 1$ for $\phi$ almost all $x$ and such that
    \[
        \textstyle\int S(u) \ud \mu = T(u) = \textstyle\int \langle u(x), k(x) \rangle \ud \phi \, x \quad \text{whenever $u \in \mathscr{K}(X, \mathscr{K}(Z))$}.
    \]
    For $\beta \in \mathscr{K}(Z)^+$, we have
    \[
        0 \leq \textstyle\int \alpha(x)\beta(z) \ud \mu \, (x, z) = \textstyle\int \alpha(x) \langle \beta, k(x) \rangle \ud \phi \, x \quad \text{whenever $\alpha \in \mathscr{K}(X)^+$};
    \]
    thus, $\langle \beta, k(x) \rangle \geq 0$ for $\phi$ almost all $x$. As $\mathscr{K}(Z)^+$ is separable, we conclude for $\phi$ almost all $x$, $k(x)$ is monotone, hence $1 = |k(x)| = k(x)(Z)$. Therefore, $k$ is a $\phi$ Young function of type $Z$ and $\mu = \mathbf{Y}(\phi, k)$, hence $p_\# \Gamma = q_\# \mu = \phi$. By \ref{lemma: extension of disintegration formula to integrable functions}, we conclude that $k(x)(Z \without Y) = 0$ for $\phi$ almost all $x$.
    
    Note that extension by zero gives a continuous monomorphism 
    \[
        j: \mathscr{K}(Y) \to \mathscr{K}(Z),
    \]
    hence a continuous linear map 
    \[
        j^*: \mathscr{K}(Z)^* \to \mathscr{K}(Y)^*,
    \]
    where $\mathscr{K}(Z)^*$ and $\mathscr{K}(Y)^*$ are endowed with the weak topology. Therefore, $f = j^* \circ k$ is a $p_\# \Gamma$ Young function of type $Y$ such that
    \begin{align*}
        \textstyle\int \psi \ud \Gamma 
        &= \textstyle\int j(\psi(x, \cdot))(y) \ud \mu \, (x, y) \\
        &= \textstyle\int \langle j(\psi(x, \cdot)), k(x) \rangle \ud \phi \, x \\
        &= \textstyle\int \langle \psi(x, \cdot), f(x) \rangle \ud \phi \, x \\
        &= \textstyle\iint \psi(x, y) \ud f(x) \, y \ud (p_\# \Gamma) \, x
    \end{align*}
    whenever $\psi \in \mathscr{K}(X \times Y)$.

    Finally, suppose $g$ is a $p_\# \Gamma$ Young function of type $Y$ such that 
    \[
        \mathbf{Y}(p_\# \Gamma, g) = \mathbf{Y}(p_\# \Gamma, f).
    \]
    By \ref{lemma: two weak topologies agree} and \ref{lemma: embedding of compact family of measures}\eqref{lemma: embedding of compact family of measures 1}, we see $x \mapsto \iota_\#(g(x))$ for $x \in \domain g$ is a $p_\# \Gamma$ Young function of type $Z$. Note that $k(x)(Z \without Y) = 0$, hence by \ref{lemma: embedding of compact family of measures}\eqref{lemma: embedding of compact family of measures 2}, $\iota_\# (f(x)) = k(x)$ for $p_\# \Gamma$ almost all $x$. By \ref{lemma: extension of disintegration formula to integrable functions} and \cite[2.4.18]{Federer69_MR0257325}, we compute for $u \in \mathscr{K}(X, \mathscr{K}(Z))$
    \begin{align*}
        T(u)
        &= \textstyle\int \langle u(x), k(x) \rangle \ud (p_\# \Gamma) \, x \\
        &= \textstyle\iint S(u)(x, \iota(y) ) \ud f(x) \, y \ud (p_\# \Gamma) \, x \\
        &= \textstyle\int S(u)(x, \iota(y)) \ud \mathbf{Y}(p_\# \Gamma, f) \, (x, y) \\
        &= \textstyle\int S(u)(x, \iota(y)) \ud \mathbf{Y}(p_\# \Gamma, g) \, (x, y) \\
        &= \textstyle\iint S(u)(x, \iota(y) ) \ud g(x) \, y \ud (p_\# \Gamma) \, x \\
        &= \textstyle\iint S(u)(x, z) \ud \iota_\# (g(x)) \, z \ud (p_\# \Gamma) \, x \\
        &= \textstyle\int \langle u(x), \iota_\# (g(x)) \rangle \ud (p_\# \Gamma) \, x.
    \end{align*}
    From the uniqueness of $k$, we conclude $\iota_\# (g(x)) = k(x)$, hence by \ref{lemma: embedding of compact family of measures}\eqref{lemma: embedding of compact family of measures 2}, $g(x) = j^* (\iota_\# (g(x))) = (j^* \circ k)(x) = f(x)$ for $p_\# \Gamma$ almost all $x$.
\end{proof}


Finally, we will finish this section with the compactness theorem for pairs of varifolds and Young functions.


The following lemma shows that if $V$ is a rectifiable varifold, then there is a one-to-one correspondence between $\|V\|$ Young functions and $V$ Young functions obtained from precomposition with the functions $\tau$ and $p$ as in the lemma.


\begin{lemma}
    \label{lemma: young function correspondence}
    Suppose $m$ and $n$ are positive integers, $m \leq n$, $V$ is an $m$-dimensional varifold in some open set $U$ of $\mathbf{R}^n$, and define $\tau$ by 
    \[
        \tau(x) = (x, \tangent^m(\|V\|, x)) \quad \text{whenever $\tangent^m(\|V\|, x) \in \mathbf{G}(n, m)$.}
    \]
    Then, the following statements hold.
    \begin{enumerate}
        \item 
            \label{lemma: young function correspondence 1}
            $V = \tau_\# \|V\|$.
        \item 
            \label{lemma: young function correspondence 2}
            For $A \subset U \times \mathbf{G}(n, m)$, $A$ is $V$ measurable if and only if $\tau^{-1}[A]$ is $\|V\|$ measurable.
        \item 
            \label{lemma: young function correspondence 3}
            $\tau(p(x, S)) = (x, S)$ for $V$ almost all $(x, S)$, where $p: U \times \mathbf{G}(n, m) \to U$ is the projection map.
    \end{enumerate}
\end{lemma}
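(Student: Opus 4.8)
The plan is to derive everything from part~(1), and to obtain~(1) from Allard's structure theory of rectifiable varifolds together with the uniqueness of the Radon measure representing a positive Daniell integral (see \cite[2.5.13, 2.5.14]{Federer69_MR0257325}). For~(1) I would recall from \cite[3.5]{Allard72_MR307015} that, since $V \in \mathbf{RV}_m(U)$, the approximate tangent plane $\tangent^m(\|V\|, x)$ exists for $\|V\|$ almost all $x$---so that $\tau$ is defined $\|V\|$ almost everywhere and, being the product of the identity map with the $\|V\|$ measurable map $x \mapsto \tangent^m(\|V\|, x)$, is a $\|V\|$ measurable map---and that
\[
    \textstyle\int \phi \ud V = \textstyle\int \phi(x, \tangent^m(\|V\|, x)) \ud \|V\| \, x = \textstyle\int \phi \circ \tau \ud \|V\| = \textstyle\int \phi \ud (\tau_\# \|V\|)
\]
whenever $\phi \in \mathscr{K}(U \times \mathbf{G}(n, m))$, the final equality holding by \cite[2.4.18]{Federer69_MR0257325}. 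The image measure $\tau_\# \|V\|$ is locally finite because $\mathbf{G}(n, m)$ is compact, so that $\tau^{-1}[L] \subset (p[L]) \cap \domain \tau$ has finite $\|V\|$ measure whenever $L \subset U \times \mathbf{G}(n, m)$ is compact; moreover, replacing $\tau$ by a Borel map agreeing with it $\|V\|$ almost everywhere, it is Borel regular by \cite[2.2.2, 2.2.3]{Federer69_MR0257325}. Hence $\tau_\# \|V\|$ is a Radon measure, and by the said uniqueness it equals $V$.

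Granting~(1), part~(3) is immediate: if $A$ denotes the set of those $(x, S) \in U \times \mathbf{G}(n, m)$ for which $\tau(p(x, S))$ is undefined or differs from $(x, S)$, then $\tau^{-1}[A] = \emptyset$, because $p(\tau(x)) = x \in \domain \tau$ and hence $\tau(p(\tau(x))) = \tau(x)$ whenever $x \in \domain \tau$; thus $V(A) = \|V\|(\tau^{-1}[A]) = 0$. For~(2) I would record two further consequences of~(1): first, $p_\# V = \|V\|$, since $V(E \times \mathbf{G}(n, m)) = \|V\|(E \cap \domain \tau) = \|V\|(E)$ for every $E \subset U$; and second, that a subset of $U \times \mathbf{G}(n, m)$ is $V$ null if and only if its $\tau$ preimage is $\|V\|$ null, and a subset of $U$ is $\|V\|$ null if and only if its $p$ preimage is $V$ null. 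Since $V$ and $\|V\|$ are Radon measures over $\sigma$-compact spaces, every $V$ measurable set differs from a Borel set by a $V$ null set, and likewise for $\|V\|$; combining this with the measurability of the maps $\tau$ (as a $\|V\|$ measurable map) and $p$ (continuous, hence a $V$ measurable map) and with the two null-set statements, one sees that $A$ being $V$ measurable forces $\tau^{-1}[A]$ to be $\|V\|$ measurable, whereas $\tau^{-1}[A]$ being $\|V\|$ measurable forces $p^{-1}[\tau^{-1}[A]]$ to be $V$ measurable, and part~(3) shows that this last set differs from $A$ by a subset of a $V$ null set, so $A$ is $V$ measurable as well.

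I expect the only point requiring real care to be the verification in~(1) that $\tau_\# \|V\|$ is an honest Radon measure, and not merely an outer measure prescribing the correct values on $\mathscr{K}(U \times \mathbf{G}(n, m))$; after passing to a Borel representative of $\tau$ defined on a Borel set of full $\|V\|$ measure this is a routine application of the image-measure results of \cite[2.2.2, 2.2.3]{Federer69_MR0257325} together with the local compactness, $\sigma$-compactness, and metrizability of $U \times \mathbf{G}(n, m)$. The remainder is formal bookkeeping with image measures and with the relations $p \circ \tau = \mathbf{1}_{\domain \tau}$ and, via part~(3), $\tau \circ p = \mathbf{1}$ holding $V$ almost everywhere.
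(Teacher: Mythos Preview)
Your proposal is correct and follows essentially the same approach as the paper: part~(1) via Allard \cite[3.5]{Allard72_MR307015} and the uniqueness of the Radon measure representing the integral, part~(3) from the observation that $\tau^{-1}$ of the bad set is empty, and part~(2) by Borel approximation plus null-set bookkeeping. The only cosmetic difference is that for the implication ``$\tau^{-1}[A]$ is $\|V\|$ measurable $\Rightarrow$ $A$ is $V$ measurable'' the paper invokes the Carath\'eodory criterion directly from $V = \tau_\#\|V\|$ (one line), whereas you route through $p$ and part~(3); both are fine, and the paper cites \cite[2.11]{MS25_MR4864996} where you argue the Radon property of $\tau_\#\|V\|$ by hand.
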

\begin{proof}
    By \cite[2.11]{MS25_MR4864996} and \cite[3.5(1b)]{Allard72_MR307015}, $\tau_{\#} \|V\|$ is a Radon measure and $V = \tau_{\#} \|V\|$. Let $A \subset U \times \mathbf{G}(n, m)$. Clearly, if $\tau^{-1}[A]$ is $\|V\|$ measurable, then $A$ is $V = \tau_\# \|V\|$ measurable. If $A$ is $V$ measurable, then there exists a Borel set $B$ such that $A \subset B$ and $V(B \without A) = 0$, hence $\|V\| \tau^{-1}[B \without A] = 0$. Therefore, 
    \[
        \tau^{-1}[A] = \tau^{-1}[B] \without \tau^{-1}[B \without A]
    \]
    is $\|V\|$ measurable and \eqref{lemma: young function correspondence 2} follows. To prove \eqref{lemma: young function correspondence 3}, we note that 
    \[
        \tau^{-1}[\{ (x, S) \with (x, S) \neq (x, \tangent^m(\|V\|, x)) \}] = \varnothing,
    \]
    and the assertion follows from \eqref{lemma: young function correspondence 1}.
\end{proof}


\begin{theorem}[Compactness]
    \label{theorem: compactness theorem for pairs of rectifiable varifolds and Young functions}
    Suppose $m$ and $n$ are positive integers, $m \leq n$, $V_i$ is a sequence of $m$-dimensional varifolds in some open set $U$ of $\mathbf{R}^n$ such that $V = \lim_{i \to \infty} V_i$,
    $Y$ is a second-countable locally compact Hausdorff space, $L_j$ is a sequence of compact subsets of $Y$ such that $\bigcup_{j=1}^\infty L_j = Y$ and $L_j \subset \interior L_{j+1}$ whenever $j \in \mathscr{P}$, and $f_i$ is a sequence of $\|V_i\|$ Young functions of type $Y$ such that
    \[
        \lim_{j \to \infty} \sup \{ \mathbf{Y}(\|V_i\|, f_i)(K \times (Y \without L_j)) \with i \in \mathscr{P} \} = 0,
    \]
    whenever $K$ is a compact subset of $U$.

    Then, exists a $\|V\|$ Young function $f$ of type $Y$ such that, possibly passing to a subsequence,
    \[
        \mathbf{Y}(\|V\|, f) = \lim_{i \to \infty} \mathbf{Y}(\|V_i\|, f_i);
    \]
    moreover, if $V_i$ and $V$ are rectifiable, then
    \[
        \mathbf{Y}(V, f \circ p) = \lim_{i \to \infty} \mathbf{Y}(V_i, f_i \circ p),
    \]
    where $p: U \times \mathbf{G}(n, m) \to U$ is the projection map.
\end{theorem}
\begin{proof}
    The main assertion follows from \ref{theorem: compactness of graph measures} and \ref{theorem: disintegration}, and the postscript follows from \ref{lemma: young function correspondence}.
\end{proof}

\section{The space of probability Radon measures and operations on Young functions}
\label{section: the space of probability Radon measures and operations on Young functions}

In this section, we first define a metric on the space of probability Radon measures and study
its topological and metric structures. Then, we show that the class of Young functions is stable under several operations.


\begin{definition}
    \label{definition: function space E}
    Suppose $Y$ is a finite-dimensional Banach space. For $0 \leq s < \infty$, the space $\mathbf{E}_s(Y)$ consists of functions $\gamma: Y \to \mathbf{R}$ of class $1$ such that $\gamma(0) = 0$ and $\spt \der \gamma \subset \mathbf{B}(0, s)$. We endow $\mathbf{E}_s(Y)$ with the norm whose value equals
    \[
        \sup \im \| \der \gamma \| \quad \text{at $\gamma \in \mathbf{E}_s(Y)$},
    \]
    and we endow $\mathbf{E}(Y) = \bigcup \{ \mathbf{E}_s(Y) \with 0 \leq s < \infty \}$ with the locally convex final topology induced by the inclusion maps $\mathbf{E}_s(Y) \to \mathbf{E}(Y)$.
\end{definition}


\begin{remark}
    Suppose $Y$ is a finite-dimensional Banach space. Letting $k = \dimension Y$ and employing the linear isomorphism $Y \simeq \mathbf{R}^k$, the standard mollification argument in Euclidean spaces also works for $Y$.
\end{remark}


\begin{remark}
    \label{remark: basic estimate E_s}
    Whenever $0 \leq s < \infty$ and $\gamma \in \mathbf{E}_s(Y)$, we have
    \[
        |\gamma(y)| \leq \inf\{ |y|, s \} \sup \im \|\der \gamma\| \quad \text{for $y \in Y$};
    \]
    in particular, $\gamma$ is bounded.
\end{remark}


\begin{remark}
    \label{remark: probability measures as dual elements}
    The canonical map $\mathbf{P}(Y) \to \mathbf{E}(Y)^*$ is continuous by \ref{lemma: two weak topologies agree} and \ref{remark: basic estimate E_s}; so is its composition with the dual map $\mathbf{E}(Y)^* \to \mathbf{E}_s(Y)^*$ whenever $0 \leq s < \infty$, where the dual spaces are endowed with the corresponding weak topology.
\end{remark}

The following lemma shows that, with the help of Ascoli's theorem, the condition on the support of functions in $\mathbf{E}_s(Y)$ allows us to retain the continuity of the canonical map $\mathbf{P}(Y) \to \mathbf{E}_s(Y)^*$ with respect to the dual norm topology on $\mathbf{E}_s(Y)^*$.


\begin{lemma}
    \label{lemma: probability measures as dual elements}
    Suppose $Y$ is a finite-dimensional Banach space and $0 \leq s < \infty$. Then, the canonical map $\mathbf{P}(Y) \to \mathbf{E}_s(Y)^*$ is continuous with bounded image, where $\mathbf{E}_s(Y)^*$ is endowed with the dual norm topology.
\end{lemma}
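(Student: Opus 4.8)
The plan is to show that the canonical map $\iota: \mathbf{P}(Y) \to \mathbf{E}_s(Y)^*$ — sending $\nu$ to the functional $\gamma \mapsto \int \gamma \ud \nu$ (well-defined by \ref{remark: basic estimate E_s}, since every $\gamma \in \mathbf{E}_s(Y)$ is bounded) — is continuous when $\mathbf{E}_s(Y)^*$ carries the \emph{dual norm} topology, which is finer than the weak-$*$ topology for which continuity is already known (\ref{remark: probability measures as dual elements}). The image is bounded because $|\iota(\nu)(\gamma)| \leq \sup\{|\gamma(y)| : y \in Y\} \leq s \sup\im\|\der\gamma\|$ for all $\nu \in \mathbf{P}(Y)$ and $\gamma \in \mathbf{E}_s(Y)$, using \ref{remark: basic estimate E_s} again; so $\|\iota(\nu)\| \leq s$ uniformly. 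The real content is the norm-continuity.

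For continuity, the idea is: a neighbourhood base at $\iota(\nu_0)$ in the dual-norm topology is given by the sets $\{\Lambda : \sup_{\gamma \in B} |\Lambda(\gamma) - \iota(\nu_0)(\gamma)| < \varepsilon\}$ where $B$ is the closed unit ball of $\mathbf{E}_s(Y)$. The unit ball $B$ consists of functions $\gamma$ of class $1$ with $\gamma(0) = 0$, $\spt\der\gamma \subset \mathbf{B}(0,s)$, and $\sup\im\|\der\gamma\| \leq 1$; by \ref{remark: basic estimate E_s} these are uniformly bounded (by $s$) and $1$-Lipschitz, hence equicontinuous, and moreover they are all constant equal to some value outside $\mathbf{B}(0,s)$ — in fact, since $\gamma(0)=0$ and $\|\der\gamma\|\le 1$ on $\mathbf{B}(0,s)$, each $\gamma\in B$ is determined by its restriction to the compact set $\overline{\mathbf{B}}(0,s)$ and is constant on each connected component of $Y \without \mathbf{B}(0,s)$ (here $Y$ connected, so $\gamma$ is eventually constant). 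By the Arzelà–Ascoli theorem, $B$ is relatively compact in $\mathscr{C}(\overline{\mathbf{B}}(0,s))$ with the sup norm; combined with the eventual-constancy, $B$ is relatively compact in the space of bounded continuous functions on $Y$ with the sup norm, equivalently (after truncation/cutoff) its elements can be uniformly approximated by finitely many functions from $\mathscr{K}(Y)$. Concretely: given $\varepsilon > 0$, choose a finite set $\gamma_1, \dotsc, \gamma_N \in B$ such that every $\gamma \in B$ satisfies $\sup\im|\gamma - \gamma_\ell| < \varepsilon/3$ for some $\ell$; replace each $\gamma_\ell$ by a compactly supported continuous $\tilde\gamma_\ell$ agreeing with it on $\overline{\mathbf{B}}(0,s)$ and with $\sup\im|\gamma_\ell - \tilde\gamma_\ell|$ controlled (possible because $\gamma_\ell$ is constant at infinity — one only needs to kill the tail, and since $|\gamma_\ell(y)| \le s$, multiplying by a cutoff costs at most a fixed amount, absorbed into the estimate as in the proof of \ref{lemma: two weak topologies agree}). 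Then for $\nu$ in the weak-$*$ neighbourhood $\{\nu : |\int\tilde\gamma_\ell \ud\nu - \int\tilde\gamma_\ell\ud\nu_0| < \varepsilon/3,\ \ell = 1,\dotsc,N\}$ of $\nu_0$, a three-$\varepsilon$ estimate (triangle inequality through $\gamma_\ell$ and $\tilde\gamma_\ell$, using $\|\nu - \nu_0\|_{TV}\le 2$ to bound the error terms) gives $|\int\gamma\ud\nu - \int\gamma\ud\nu_0| < \varepsilon$ for all $\gamma \in B$ simultaneously, which is exactly the desired dual-norm proximity. Since the weak-$*$ neighbourhood is open in $\mathbf{P}(Y)$ by \ref{lemma: two weak topologies agree}, continuity follows.

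The main obstacle I anticipate is the handling of the tails: functions in $\mathbf{E}_s(Y)$ are not compactly supported, only their gradients are, so to invoke Arzelà–Ascoli and to pass from $\mathscr{K}(Y)$-testing (which is what weak-$*$ convergence gives) to testing against all of $B$ one must carefully truncate the finitely many approximating functions $\gamma_\ell$ and control the resulting error uniformly in $\nu \in \mathbf{P}(Y)$, i.e.\ using only that $\nu$ is a probability measure (total mass $1$) and the uniform bound $\sup\im|\gamma_\ell| \le s$. This is the same mechanism that appears in the proof of \ref{lemma: two weak topologies agree} for passing from $\mathscr{K}(Y)$ to bounded continuous functions, adapted to be uniform over the equicontinuous family $B$; everything else — the Arzelà–Ascoli compactness of $B$ restricted to $\overline{\mathbf{B}}(0,s)$, the finite $\varepsilon/3$-net, the three-$\varepsilon$ argument — is routine once that point is set up correctly.
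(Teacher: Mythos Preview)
Your approach is essentially the paper's: Arzel\`a--Ascoli gives total boundedness of the unit ball $B \subset \mathbf{E}_s(Y)$ in the sup norm on all of $Y$ (using that each $\gamma \in B$ is constant outside $\mathbf{B}(0,s)$, so $\sup\im|\gamma_1 - \gamma_2| = \sup\{|\gamma_1(y) - \gamma_2(y)| : |y| \le s\}$), and then a finite $\varepsilon$-net plus \ref{lemma: two weak topologies agree} finishes. The paper packages the last step via sequences and metrizability of $\mathbf{P}(Y)$, while you do a direct neighbourhood argument; either is fine.

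However, your truncation step is both unnecessary and, as written, does not work. If $\gamma_\ell$ is constant equal to $c_\ell$ outside $\mathbf{B}(0,s)$ and $\tilde\gamma_\ell \in \mathscr{K}(Y)$, then necessarily $\sup\im|\gamma_\ell - \tilde\gamma_\ell| \ge |c_\ell|$, which can be as large as $s$; this is not ``absorbed'' by anything, and invoking $\|\nu - \nu_0\|_{TV} \le 2$ only gives a bound of order $2s$, which does not tend to zero. The fix is to skip the truncation entirely: once you have the finite net $\gamma_1, \dotsc, \gamma_N \in B$ in the \emph{global} sup norm, each $\gamma_\ell$ is a bounded continuous function, so \ref{lemma: two weak topologies agree} already tells you that $\nu \mapsto \int \gamma_\ell \ud\nu$ is continuous on $\mathbf{P}(Y)$. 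The weak-open set $\{\nu : |\int\gamma_\ell \ud\nu - \int\gamma_\ell \ud\nu_0| < \varepsilon/3 \text{ for all } \ell\}$ is then the neighbourhood you want, and a plain three-term estimate through $\gamma_\ell$ (no $\tilde\gamma_\ell$) gives $\|\iota(\nu) - \iota(\nu_0)\| < \varepsilon$.
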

\begin{proof}
    By \cite[2.10.21]{Federer69_MR0257325}, the function space 
    \[
        \mathbf{E}_s(Y) \cap \{ \gamma \with \sup \im \|\der \gamma\| \leq 1 \}
    \]
    is contained in the compact space $F$ of Lipschitzian functions $f: Y \to \mathbf{R}$ with $f(0) = 0$ and $\lipschitz f \leq 1$, where $F$ is endowed with the topology of uniform convergence on each compact subset of $Y$, or equivalently, the topology induced by the metric $\rho$ with value
    \[
        \rho(f, g) = \sum_{i=1}^\infty 2^{-i} \cdot \frac{\sup \im |(f - g)| \mathbf{B}(0, i)|}{1 + \sup \im |(f - g)| \mathbf{B}(0, i)|} \quad \text{at $(f, g) \in F \times F$}.
    \]
    Note that
    \[
        \sup \im |\gamma_1 - \gamma_2| = \sup \{ |\gamma_1(y) - \gamma_2(y)| \with y \in Y \cap \mathbf{B}(0, s) \}
    \]
    whenever $\gamma_1, \gamma_2 \in \mathbf{E}_s(Y)$. Then, it is straightforward to show that the function space $\mathbf{E}_s(Y) \cap \{ \gamma \with \sup \im \|\der \gamma\| \leq 1 \}$ is totally bounded with respect to the supremum metric. With the help of \ref{lemma: two weak topologies agree}, it follows that every convergent sequence in $\mathbf{P}(Y)$ also converges in $\mathbf{E}_s(Y)^*$. Since $\mathbf{P}(Y) \subset \mathscr{K}(Y)^*$ is metrizable by \cite[2.23]{Menne16a_MR3528825}, the continuity of the canonical map follows, and the boundedness of its image is immediate by \ref{remark: basic estimate E_s}.
\end{proof}


\begin{lemma}
    \label{lemma: isometric embedding}
    Suppose $Y$ is a finite-dimensional Banach space. There holds
    \[
        |y_1 - y_2| = \sup \{ \gamma(y_1) - \gamma(y_2) \with \gamma \in \mathbf{E}(Y), \sup \im \|\der \gamma\| \leq 1 \}
    \]
    whenever $y_1, y_2 \in Y$.
\end{lemma}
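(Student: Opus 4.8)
The plan is to establish both inequalities. For the inequality ``$\leq$'', I would exhibit, for each pair $y_1, y_2 \in Y$, a specific function $\gamma \in \mathbf{E}(Y)$ with $\sup \im \|\der \gamma\| \leq 1$ for which $\gamma(y_1) - \gamma(y_2)$ is close to $|y_1 - y_2|$. The natural candidate is a truncated and mollified version of the linear functional $y \mapsto \langle y - y_2, \ell \rangle$, where $\ell \in Y^*$ is a norming functional for $y_1 - y_2$ (so $\|\ell\| \leq 1$ and $\langle y_1 - y_2, \ell \rangle = |y_1 - y_2|$, available by Hahn--Banach). This linear function has $\sup \im \|\der\| \leq 1$ but is unbounded, so it is not in any $\mathbf{E}_s(Y)$; hence I would cut it off smoothly outside a large ball $\mathbf{B}(0, R)$ containing $y_1$ and $y_2$, and then mollify, arranging that the resulting $\gamma$ still satisfies $\gamma(0) = 0$ (subtracting a constant is harmless), $\spt \der \gamma$ is bounded, $\sup \im \|\der \gamma\| \leq 1$, and $\gamma$ agrees with the linear function on a neighbourhood of the segment $[y_1, y_2]$ — or at least on $\{y_1, y_2\}$ — up to an error that vanishes as the cutoff radius grows and the mollification parameter shrinks. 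This yields $\gamma(y_1) - \gamma(y_2) \to |y_1 - y_2|$.

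For the reverse inequality ``$\geq$'', I would simply note that any $\gamma$ with $\sup \im \|\der \gamma\| \leq 1$ is $1$-Lipschitz: for $y_1, y_2 \in Y$, integrating $\der \gamma$ along the segment joining them gives $|\gamma(y_1) - \gamma(y_2)| \leq \sup \im \|\der \gamma\| \cdot |y_1 - y_2| \leq |y_1 - y_2|$, so the supremum on the right cannot exceed $|y_1 - y_2|$. This direction is immediate.

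The main obstacle, and the only step requiring care, is the construction in the ``$\leq$'' direction: one must simultaneously maintain the three constraints defining membership in $\mathbf{E}_s(Y)$ — namely $\gamma(0) = 0$, compact support of $\der \gamma$, and the normalization $\sup \im \|\der \gamma\| \leq 1$ — while keeping $\gamma(y_1) - \gamma(y_2)$ near $|y_1 - y_2|$. The cutoff will generically increase $\|\der \gamma\|$ slightly near the boundary of $\mathbf{B}(0, R)$ (the product rule introduces a term involving the derivative of the cutoff function times the linear part, which grows like $R$ times the cutoff's derivative), so I would take the cutoff to transition over an annulus of width proportional to $R$, making its gradient of order $1/R$ times a constant and hence the extra term of order a constant; a cleaner fix is to first replace the linear function by $\min\{\langle y - y_2, \ell\rangle^+, |y_1-y_2|\}$-type truncations, or to scale the linear part down by $(1+\varepsilon)^{-1}$ before cutting off, absorbing the cutoff error into the $\varepsilon$ and recovering the sharp constant in the limit. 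After mollification one checks the support condition $\spt \der \gamma \subset \mathbf{B}(0, s)$ for $s$ slightly larger than $R$. Taking $R \to \infty$ and the mollification scale to $0$ completes the argument.

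Throughout I would use the linear isomorphism $Y \simeq \mathbf{R}^k$ with $k = \dimension Y$ (as in the remark following \ref{definition: function space E}) to justify the mollification, and Hahn--Banach for the norming functional; no other external input is needed.
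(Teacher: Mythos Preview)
Your proposal is correct and follows essentially the same approach as the paper: the easy inequality via the $1$-Lipschitz bound, and the sharp one by taking a norming functional $L$ (Hahn--Banach), truncating it, mollifying to a $C^1$ function with derivative bounded by $1$, and multiplying by a cutoff whose scale is chosen so that the product-rule error is absorbed by a $(1-i^{-1})$-type slack. The paper implements this concretely with $L_i = \sup\{\inf\{L,i\},-i\}$, a $C^1$ approximant $\beta_i$ with $\sup\im\|\der\beta_i\|\le 1$ and $\sup\im|L_i-\beta_i|\le i^{-1}$, and $\gamma_i = \phi_i\beta_i$ where $\phi_i(y)=(1-i^{-1})\phi(i^{-3}y)$, which is exactly the ``scale down then cut off on a wide annulus'' fix you describe.
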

\begin{proof}
    Since $|\gamma(y_1) - \gamma(y_2)| \leq |y_1 - y_2| \sup \im \|\der \gamma\|$ whenever $\gamma \in \mathbf{E}(Y)$, it suffices to show there exists a sequence $\gamma_i \in \mathbf{E}(Y)$ such that 
    \[
        \lim_{i \to \infty} \gamma_i(y_1) - \gamma_i(y_2) = |y_1 - y_2|.
    \]
    Note that there exists $L \in \homomorphism(Y, \mathbf{R})$ such that $L(y_1 - y_2) = |y_1 - y_2|$ and $\|L\| \leq 1$.
    Whenever $i \in \mathscr{P}$, since $L_i = \sup \{ \inf \{ L, i \}, -i \}$ is uniformly continuous and $L_i(0) = 0$, there exists $\beta_i: Y \to \mathbf{R}$ of class $1$ such that $\beta_i(0) = 0$, $\sup \im \|\der \beta_i\| \leq 1$, and $\sup \im |L_i - \beta_i| \leq i^{-1}$. Choose a function $\phi: Y \to \mathbf{R}$ of class $1$ such that $0 \leq \phi \leq 1$, $\phi| \mathbf{B}(0, 1) = 1$, $\sup \im \|\der \phi\| \leq 1$, and $\spt \phi$ is compact, and we define $\phi_i(y) = (1 - i^{-1}) \phi(i^{-3}y)$ whenever $y \in Y$ and $i \in \mathscr{P}$. Then, we have $\gamma_i = \phi_i \beta_i \in \mathbf{E}(Y)$, $\sup \im \|\der\gamma_i\| \leq 1$ for $i \geq 1$, and $L(y) = \lim_{i \to \infty}\gamma_i(y)$ for all $y \in Y$ and the assertion follows.
\end{proof}


\begin{definition}
    \label{definition: pseudo-metric d}
    Suppose $Y$ is a finite-dimensional Banach space, we define the pseudo-metric $d$ on $\mathbf{P}(Y)$ by
    \[
        d(\mu, \nu) = \sup \left\{ \textstyle\int \gamma \ud \mu - \textstyle\int \gamma \ud \nu \with \gamma \in \mathbf{E}(Y), \sup \im \|\der \gamma\| \leq 1 \right\}
    \]
    whenever $\mu, \nu \in \mathbf{P}(Y)$.
\end{definition}


\begin{remark}
    If $\mu, \nu \in \mathbf{P}(Y)$ satisfy $d(\mu, \nu) = 0$, we have $\int \gamma \ud \mu = \int \gamma \ud \nu$ whenever $\gamma \in \mathscr{K}(Y)$ with $\lipschitz \gamma < \infty$; it follows that $\mu = \nu$.
\end{remark}


\begin{remark}
    \label{remark: equivalent def of d}
    By approximation, we readily show that
    \begin{align*}
        d(\mu, \nu) 
        &= \sup \left\{ \textstyle\int \gamma \ud \mu - \textstyle\int \gamma \ud \nu \with \gamma: Y \to \mathbf{R}, \lipschitz \gamma \leq 1 \right\} \\
        &= \sup \left\{ \textstyle\int \gamma \ud \mu - \textstyle\int \gamma \ud \nu \with \gamma \in \mathscr{K}(Y), \lipschitz \gamma \leq 1 \right\}.
    \end{align*}
    whenever $\mu, \nu \in \mathbf{P}(Y)$.
\end{remark}


\begin{remark}
    \label{remark: isometric embedding}
    It follows from \ref{lemma: isometric embedding} that $\boldsymbol{\updelta}: Y \to \mathbf{P}(Y)$ as in \ref{remark: single-valued Young function} is an isometric embedding.
\end{remark}


\begin{definition}
    \label{definition: Lipschitz continuity Young function}
    Suppose $(X, \rho)$ is a pseudo-metric space and $Y$ is a finite-dimensional Banach space. A function $f: X \to \mathbf{P}(Y)$ is termed to be \textit{Lipschitzian} if and only if there exists $0 < M < \infty$ such that
    \[
        d(f(x), f(y)) \leq M \rho(x, y) \quad \text{whenever $x, y \in X$},
    \]
    where $d$ as in \ref{definition: pseudo-metric d}. The Lipschitz constant of $f$ is defined to be the infimum of all such numbers $M$. We say $f$ is \textit{locally Lipschitzian} if and only if whenever $x \in X$, there exists a neighborhood $U$ of $x$ such that $f|U$ is Lipschitzian.
\end{definition}


\begin{remark}
    Note that if $X$ is a metric space and $f$ is a Lipschitzian $\mathbf{P}(Y)$-valued function, then we have $d(f(x), f(y)) < \infty$ whenever $x, y \in X$; in particular, $d| (\im f \times \im f)$ is a metric and $f$ is Lipschitzian in the usual sense.
\end{remark}


\begin{remark}
    \label{remark: distance and 1-moment}
    By \ref{remark: equivalent def of d}, we have
    \[
        d(\boldsymbol{\updelta}_0, \mu) = \textstyle\int |y| \ud \mu \, y \quad \text{whenever $\mu \in \mathbf{P}(Y)$}.
    \]
    Let
    \[
        \mathbf{P}_1(Y) = \mathbf{P}(Y) \cap \left\{ \mu \with \textstyle\int |y| \ud \mu \, y < \infty \right\}.
    \]
    Then, $(\mathbf{P}_1(Y)), d)$ is a metric space.
\end{remark}


Next, we shall investigate the relation between the weak topology and the $d$ topology on $\mathbf{P}_1(Y)$ when $Y$ is a finite-dimensional Banach space.


\begin{remark}
    \label{remark: d agrees with 1-Wasserstein metric}
    The notation $\mathbf{P}_1(Y)$ is taken from \cite[6.4]{Vil09_MR2459454}, on which the metric $d$ agree with $1$-Wasserstein metric, see \cite[6.1, 6.5]{Vil09_MR2459454}. The following characterization of $d$ convergence on $\mathbf{P}_1(Y)$ is a special case of \cite[6.9]{Vil09_MR2459454}.
\end{remark}


\begin{lemma}[see \protect{\cite[6.8, 6.9]{Vil09_MR2459454}}]
    \label{lemma: characterization of the metric topology}
    Suppose $Y$ is a finite-dimensional Banach space, $\mu_i$ is a sequence in $\mathbf{P}_1(Y)$, and $\mu \in \mathbf{P}_1(Y)$. Then, the following two statements are equivalent.
    \begin{enumerate}
        \item 
            $\lim_{i \to \infty} d(\mu_i, \mu) = 0$.
        \item 
            $\mu_i \to \mu$ weakly as $i \to \infty$ and $\lim_{i \to \infty} \int |y| \ud \mu_i \, y = \int |y| \ud \mu \, y$.
    \end{enumerate}
\end{lemma}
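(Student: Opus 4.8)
The plan is to prove the equivalence directly; alternatively, by \ref{remark: d agrees with 1-Wasserstein metric} the pseudo-metric $d$ restricted to $\mathbf{P}_1(Y)$ coincides with the $1$-Wasserstein metric, so the statement is also a special case of \cite[6.8, 6.9]{Vil09_MR2459454}. Throughout I use \ref{remark: equivalent def of d} to evaluate $d(\mu, \nu)$ as the supremum of $\textstyle\int \gamma \ud \mu - \textstyle\int \gamma \ud \nu$ over all Lipschitzian functions $\gamma: Y \to \mathbf{R}$ with $\lipschitz \gamma \leq 1$; since $\mu$ and $\nu$ are probability measures, this supremum is unchanged if one additionally demands $\gamma(0) = 0$, in which case $|\gamma(y)| \leq |y|$ for $y \in Y$.

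First I would settle the implication $(1) \Rightarrow (2)$. The convergence of first moments is immediate from \ref{remark: distance and 1-moment} and the triangle inequality for $d$, since $\bigl| \textstyle\int |y| \ud \mu_i \, y - \textstyle\int |y| \ud \mu \, y \bigr| = |d(\boldsymbol{\updelta}_0, \mu_i) - d(\boldsymbol{\updelta}_0, \mu)| \leq d(\mu_i, \mu)$. For the weak convergence, every $g \in \mathscr{K}(Y)$ is a uniform limit of compactly supported Lipschitzian functions (mollify $g$), and $\bigl| \textstyle\int g \ud \mu_i - \textstyle\int g \ud \mu \bigr| \leq \lipschitz h \cdot d(\mu_i, \mu) + 2 \sup \im |g - h|$ whenever $\lipschitz h < \infty$; hence $\textstyle\int g \ud \mu_i \to \textstyle\int g \ud \mu$ for every $g \in \mathscr{K}(Y)$, and \ref{lemma: two weak topologies agree} turns this into weak convergence in the asserted sense.

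The substance of the lemma is $(2) \Rightarrow (1)$, which I would do in two steps. The first step derives from hypothesis $(2)$ the uniform integrability
\[
    \lim_{R \to \infty} \sup \{ \textstyle\int_{Y \without \mathbf{B}(0, R)} |y| \ud \mu_i \, y \with i \in \mathscr{P} \} = 0.
\]
Indeed, fixing for each $R$ a continuous $\theta_R: Y \to [0, 1]$ with $\theta_R | \mathbf{B}(0, R) = 1$ and $\spt \theta_R \subset \mathbf{B}(0, 2R)$, the functions $y \mapsto |y| \theta_R(y)$ are bounded and continuous, so $\textstyle\int |y| \theta_R(y) \ud \mu_i \, y \to \textstyle\int |y| \theta_R(y) \ud \mu \, y$ by \ref{lemma: two weak topologies agree}; combining with the convergence of the full first moments gives $\limsup_i \textstyle\int_{Y \without \mathbf{B}(0, R)} |y| \ud \mu_i \, y \leq \textstyle\int_{Y \without \mathbf{B}(0, R)} |y| \ud \mu \, y$, and since $\textstyle\int |y| \ud \mu \, y < \infty$ the displayed limit follows. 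The second step estimates $d(\mu_i, \mu)$: given $\varepsilon > 0$, choose $R$ with $\sup_i \textstyle\int_{Y \without \mathbf{B}(0, R)} |y| \ud \mu_i \, y + \textstyle\int_{Y \without \mathbf{B}(0, R)} |y| \ud \mu \, y < \varepsilon$ and fix a Lipschitzian $\theta: Y \to [0, 1]$ with $\theta | \mathbf{B}(0, R) = 1$, $\spt \theta \subset \mathbf{B}(0, 2R)$, and $\lipschitz \theta \leq 1/R$. For $\gamma$ with $\gamma(0) = 0$ and $\lipschitz \gamma \leq 1$, write $\textstyle\int \gamma \ud \mu_i - \textstyle\int \gamma \ud \mu = \bigl( \textstyle\int \gamma \theta \ud \mu_i - \textstyle\int \gamma \theta \ud \mu \bigr) + \bigl( \textstyle\int \gamma (1 - \theta) \ud \mu_i - \textstyle\int \gamma (1 - \theta) \ud \mu \bigr)$; since $\gamma(y)(1 - \theta(y))$ vanishes for $|y| \leq R$ and has modulus at most $|y|$ otherwise, the second summand is bounded in absolute value by $\varepsilon$, uniformly in $\gamma$ and $i$. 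As $\gamma$ varies, the functions $\gamma \theta$ form a family $\mathcal{F} \subset \mathscr{K}_{\mathbf{B}(0, 2R)}(Y)$ bounded by $2R$ with Lipschitz constants at most $3$, and hence, by Ascoli's theorem \cite[2.10.21]{Federer69_MR0257325}, totally bounded in the supremum norm; covering $\mathcal{F}$ by finitely many supremum-balls centered at members of $\mathcal{F}$ and using $\textstyle\int h \ud \mu_i \to \textstyle\int h \ud \mu$ for each of the finitely many centers $h$ (once more \ref{lemma: two weak topologies agree}), I obtain $\sup \{ |\textstyle\int h \ud \mu_i - \textstyle\int h \ud \mu| \with h \in \mathcal{F} \} \to 0$ as $i \to \infty$. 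Therefore $\limsup_i d(\mu_i, \mu) \leq \varepsilon$, and letting $\varepsilon \downarrow 0$ finishes the proof.

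The only genuinely delicate point is this last step: the supremum defining $d$ runs over a non-compact family of $1$-Lipschitzian functions, so one cannot pass to the limit termwise. The remedy is to peel off the far-field contribution, which is precisely where the first-moment hypothesis in $(2)$ is indispensable, being equivalent under weak convergence to the uniform integrability of the first step, and then to convert pointwise weak convergence into convergence uniform over the truncated family by an equicontinuity argument through Ascoli's theorem.
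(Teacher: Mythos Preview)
Your proof is correct and follows essentially the same route as the paper's: for $(1)\Rightarrow(2)$ both use the triangle inequality together with \ref{remark: distance and 1-moment} and a Lipschitz approximation of $f\in\mathscr{K}(Y)$, and for $(2)\Rightarrow(1)$ both first extract uniform integrability of the first moments from the two limits in $(2)$ and then invoke Ascoli \cite[2.10.21]{Federer69_MR0257325} to upgrade weak convergence to uniform convergence over the compactly supported $1$-Lipschitz test functions. Your explicit truncation $\gamma\mapsto\gamma\theta$ makes the final step a bit more transparent than the paper's version; the only cosmetic slip is that with your choice of $\theta_R$ the displayed $\limsup$ inequality should read $\limsup_i\int_{Y\without\mathbf{B}(0,2R)}|y|\ud\mu_i\,y\leq\int_{Y\without\mathbf{B}(0,R)}|y|\ud\mu\,y$, which still yields the uniform integrability you need.
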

\begin{proof}
    Suppose $\lim_{i \to \infty} d(\mu_i, \mu) = 0$, then
    \[
        \lim_{i \to \infty} \textstyle\int |y| \ud \mu_i \, y = \lim\limits_{i \to \infty} d(\boldsymbol{\updelta}_0, \mu_i) = d(\boldsymbol{\updelta}_0, \mu) = \int |y| \ud \mu \, y.
    \]
    Whenever $\varepsilon > 0$ and $f \in \mathscr{K}(Y)$, since $f$ is uniformly continuous, there exists a function $\gamma: Y \to \mathbf{R}$ of class $1$ such that $\spt \gamma$ is compact and $\sup \im |f - \gamma| < \varepsilon$, hence we have
    \begin{align*}
        |\mu_i(f) - \mu(f)| \leq 2 \varepsilon + |\mu_i(\gamma) - \mu(\gamma)| \leq 2\varepsilon + \sup\im \|\der \gamma\| d(\mu_i, \mu).
    \end{align*}
    Therefore, we conclude $\mu_i \to \mu$ weakly as $i \to \infty$.
    
    To prove the converse, letting $\varepsilon > 0$, we will first show that there exists $0 \leq s < \infty$ such that
    $\textstyle\int_{Y \without \mathbf{B}(0, s)} |y| \ud \mu \, y \leq \varepsilon$ and
    \[
        \textstyle\int_{Y \without \mathbf{B}(0, s)} |y| \ud \mu_i \, y \leq \varepsilon \quad \text{whenever $i \in \mathscr{P}$}.
    \]
    Since $\mu$ is a Radon measure, there exists $f \in \mathscr{K}(Y)$ such that $0 \leq f \leq 1$ and $\textstyle\int (1 - f(y)) |y| \ud \mu \, y \leq \varepsilon/2$. From the hypothesis, there exists $N \in \mathscr{P}$ such that 
    \[
        \left| \textstyle\int f(y)|y| \ud \mu_i \, y - \int f(y)|y| \ud \mu \, y \right| + \left| \textstyle\int |y| \ud \mu_i \, y - \int |y| \ud \mu \, y \right| \leq \varepsilon/2
    \]
    whenever $i \geq N$. Then, we have
    \[
        \textstyle\int_{Y \without \spt f} |y| \ud \mu_i \, y \leq \varepsilon \quad \text{whenever $i \geq N$}
    \]
    and the assertion follows. On the other hand, by \cite[2.10.21]{Federer69_MR0257325}, the set $\mathscr{K}_{\mathbf{B}(0, r)}(Y) \cap \{ \gamma \with \lipschitz\gamma \leq 1 \}$ is compact in the Banach space $\mathscr{K}_{\mathbf{B}(0, r)}(Y)$; since each member of $\mathbf{P}(Y)$ defines a continuous linear functional over $\mathscr{K}_{\mathbf{B}(0, r)}$, we have
    \[
        \lim_{i \to \infty} \sup \{ \textstyle\int \gamma \ud \mu_i - \int \gamma \ud \mu \with \gamma \in \mathscr{K}_{\mathbf{B}(0, r)}(Y), \lipschitz \gamma \leq 1 \} = 0
    \]
    whenever $0 < r < \infty$. It follows that $\lim_{i \to \infty} d(\mu_i, \mu) = 0$.
\end{proof}


\begin{theorem}
    The map $(\mathbf{P}_1(Y), d) \to \mathbf{P}(Y) \times \mathbf{R}$ defined by 
    \[
        \mu \mapsto (\mu, \textstyle\int |y| \ud \mu \, y) \quad \text{for $\mu \in \mathbf{P}_1(Y)$}
    \]
    is a homeomorphic embedding.
\end{theorem}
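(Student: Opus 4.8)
The plan is to verify the three defining properties of a homeomorphic embedding in turn: injectivity, continuity, and continuity of the inverse on the image. Injectivity will be immediate, since the first coordinate of the map already recovers $\mu \in \mathbf{P}_1(Y)$. For the other two points I would reduce everything to sequences: $(\mathbf{P}_1(Y), d)$ is a metric space by \ref{remark: distance and 1-moment}, and $\mathbf{P}(Y)$ is metrizable by \cite[2.23]{Menne16a_MR3528825} (recall that $Y$, being a finite-dimensional Banach space, is second-countable, locally compact, and Hausdorff), so $\mathbf{P}(Y) \times \mathbf{R}$ and hence the image of our map with the subspace topology are metrizable; both are first-countable, so it suffices to characterize convergent sequences. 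Then both continuity statements become restatements of \ref{lemma: characterization of the metric topology}.

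Concretely, for continuity I would take a sequence $\mu_i$ in $\mathbf{P}_1(Y)$ with $d(\mu_i, \mu) \to 0$, $\mu \in \mathbf{P}_1(Y)$; the implication $(1) \Rightarrow (2)$ of \ref{lemma: characterization of the metric topology} gives $\mu_i \to \mu$ weakly and $\int |y| \ud \mu_i \, y \to \int |y| \ud \mu \, y$, i.e.\ the images converge in $\mathbf{P}(Y) \times \mathbf{R}$. (One can also see this coordinatewise: the first coordinate is the inclusion $\mathbf{P}_1(Y) \to \mathbf{P}(Y)$, and the second coordinate equals $\mu \mapsto d(\boldsymbol{\updelta}_0, \mu)$ by \ref{remark: distance and 1-moment}, which is $1$-Lipschitzian with respect to $d$ by the triangle inequality.) For continuity of the inverse I would take a sequence $\mu_i$ in $\mathbf{P}_1(Y)$ whose images converge to the image of some $\mu \in \mathbf{P}_1(Y)$, that is, $\mu_i \to \mu$ weakly and $\int |y| \ud \mu_i \, y \to \int |y| \ud \mu \, y$, and invoke the implication $(2) \Rightarrow (1)$ of \ref{lemma: characterization of the metric topology} to get $d(\mu_i, \mu) \to 0$.

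Putting these together, the map is a continuous bijection onto its image with continuous inverse, hence a homeomorphic embedding. I expect the only genuine subtlety to be the reduction to sequences via metrizability of the two spaces involved; once that is in place the statement is a direct repackaging of \ref{lemma: characterization of the metric topology}, with no further obstacle.
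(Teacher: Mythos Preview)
Your proposal is correct and follows essentially the same route as the paper: the paper's proof consists of the single observation that the weak topology on the space of Radon measures over $Y$ is metrizable by \cite[2.23]{Menne16a_MR3528825}, after which the assertion follows from \ref{lemma: characterization of the metric topology}. You have simply made the reduction to sequences and the two directions of continuity explicit.
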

\begin{proof}
    Since the weak topology on the space of Radon measures over $Y$ is metrizable by \cite[2.23]{Menne16a_MR3528825}, the assertion follows from \ref{lemma: characterization of the metric topology}.
\end{proof}


\begin{remark}
    We can not replace $\mathbf{P}_1(Y)$ with $\mathbf{P}(Y)$ in \ref{lemma: characterization of the metric topology}. To construct a counterexample, taking $Y = \mathbf{R}$, $\mu \in \mathbf{P}(\mathbf{R}) \without \mathbf{P}_1(\mathbf{R})$, and $\mu_i = \mu \restrict \mathbf{B}(0, i)$, it is easy to verify $\mu_i \to \mu$ weakly and $\int |y| \ud \mu_i \, y \uparrow \int |y| \ud \mu \, y$ as $i \to \infty$. Since $\mu_i \in \mathbf{P}_1(\mathbf{R})$ and $\mu \in \mathbf{P}(\mathbf{R}) \without \mathbf{P}_1(\mathbf{R})$, we have $d(\mu_i, \boldsymbol{\updelta}_0) < \infty$ and $d(\mu, \boldsymbol{\updelta}_0) = \infty$ by \ref{remark: distance and 1-moment}; it follows that $d(\mu_i, \mu) = \infty$ for all $i \in \mathscr{P}$.
\end{remark}


\begin{remark}
    The $d$ topology is strictly finer than the weak topology on $\mathbf{P}_1(Y)$ provided $\dimension Y \geq 1$; for instance, letting $0 \neq v \in Y$ and $\mu \in \mathbf{P}_1(Y)$, the sequence $\mu_i = (1 - i^{-1})\mu + i^{-1}\boldsymbol{\updelta}_{i v}$ converges weakly to $\mu$ as $i \to \infty$ but 
    \[
        \lim_{i \to \infty}\textstyle\int |y| \ud \mu_i \, y - \textstyle\int |y| \ud \mu \, y = |v| > 0.
    \]
    
    However, these two topologies induce the same subspace topology on the weakly closed subsets
    \[
        A(\varepsilon) = \mathbf{P}(Y) \cap \{ \mu \with \text{$\mu\{ a \} \geq \varepsilon$ whenever $a \in \spt \mu$} \} \subset \mathbf{P}_1(Y)
    \]
    whenever $\varepsilon > 0$. To show this, suppose $\varepsilon > 0$ and $\mu_i$ is a sequence in $A(\varepsilon)$ that converges weakly to some $\mu \in \mathbf{P}(Y)$. Whenever $0 < r < \infty$ and $a \in \spt \mu$, by \cite[2.6(2c)]{Allard72_MR307015}, there exists $N \in \mathscr{P}$ such that $\mu_i(\mathbf{U}(a, r)) > 0$, hence $\mu_i(\mathbf{U}(a, r)) \geq \varepsilon$, whenever $i \geq N$; it follows that 
    \[
        \mu(\mathbf{B}(a, r)) \geq \limsup_{i \to \infty} \mu_i(\mathbf{B}(a, r)) \geq \varepsilon.
    \]
    Thus, we conclude $\mu \in A(\varepsilon)$. Since $\lim_{i \to \infty}\int (1 - f) \ud \mu_i = \int (1 - f) \ud \mu$ whenever $f \in \mathscr{K}(Y)$ and note that $\nu(S) > 0$ implies $\nu(S) \geq \varepsilon$ whenever $S \subset Y$ and $\nu \in A(\varepsilon)$, there exists $0 < s < \infty$ such that
    \[
        \sup \{ \mu_i(Y \without \mathbf{B}(0, s)) \with i \in \mathscr{P} \} = 0 \quad \text{for some $0 < s < \infty$};
    \]
    by \ref{lemma: two weak topologies agree},
    \[
        \lim_{i \to \infty} \textstyle\int |y| \ud \mu_i \, y = \lim\limits_{i \to \infty} \textstyle\int \inf\{ |y|, s \} \ud \mu_i \, y = \textstyle\int \inf\{ |y|, s \} \ud \mu \, y = \textstyle\int |y| \ud \mu \, y.
    \]
\end{remark}


The following theorem shows that the class of Young functions is stable under pushforward by Borel functions pointwise and taking the Cartesian product of measures pointwise.


\begin{theorem}
    \label{theorem: push-forward and Cartesian product of Young functions}
    Suppose $X$ and $Y$ are second-countable locally compact Hausdorff spaces and $f: X \to Y$ is a Borel function. Then, the following two statements hold.
    \begin{enumerate}
        \item 
        \label{theorem: push-forward and Cartesian product of Young functions 1}
            The pushforward of measures
            \[
                (f_\# \mu)(A) = \mu(f^{-1}[A]) \quad \text{whenever $\mu \in \mathbf{P}(X)$ and $A \subset Y$},
            \]
            defines a Borel function $f_\#: \mathbf{P}(X) \to \mathbf{P}(Y)$. Moreover, if $f$ is continuous, then so is $f_\#$.
        \item 
        \label{theorem: push-forward and Cartesian product of Young functions 2}
            The product of measures
            \[
                (\mu, \nu) \mapsto \mu \times \nu \quad \text{whenever $\mu \in \mathbf{P}(X)$ and $\nu \in \mathbf{P}(Y)$}
            \]
            defines a continuous function $\mathbf{P}(X) \times \mathbf{P}(Y) \to \mathbf{P}(X \times Y)$.
    \end{enumerate}
\end{theorem}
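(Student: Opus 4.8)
The plan is to reduce both assertions, through the initial-topology descriptions of the weak topologies involved, to statements about the evaluation maps $e_\beta(\mu) = \int \beta \ud \mu$.

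For \eqref{theorem: push-forward and Cartesian product of Young functions 1}, I would first observe that $f_\#$ really takes values in $\mathbf{P}(Y)$: since $f$ is Borel, $f^{-1}[A]$ is $\mu$ measurable whenever $A \subset Y$ is Borel, so $f_\# \mu$ is a Borel measure of total mass $\mu(X) = 1$, which is Radon because $Y$ is second countable, see \cite[2.2.2]{Federer69_MR0257325}. Next, since $\mathbf{P}(Y)$ is second countable (being separable and metrizable, see \cite[2.23]{Menne16a_MR3528825}) and carries the initial topology of the maps $\nu \mapsto \int \beta \ud \nu$ for $\beta \in \mathscr{K}(Y)$, its Borel $\sigma$-algebra is generated by these (continuous) maps; consequently $f_\#$ is Borel as soon as $\mu \mapsto \int \beta \ud(f_\# \mu) = \int (\beta \circ f) \ud \mu$ is a Borel function on $\mathbf{P}(X)$ for each $\beta \in \mathscr{K}(Y)$. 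As $\beta \circ f$ is a bounded Borel function on $X$, it therefore suffices to prove the claim: for every bounded Borel function $g : X \to \mathbf{R}$, the map $e_g : \mathbf{P}(X) \to \mathbf{R}$ given by $e_g(\mu) = \int g \ud \mu$ is Borel. To establish the claim, let $\mathcal{A}$ be the family of all Borel subsets $A$ of $X$ such that $e_{\chi_A}$ is Borel. If $U \subset X$ is open, then $e_{\chi_U}(\mu) = \mu(U)$ is the supremum of the continuous functions $e_h$ for $h \in \mathscr{K}(X)$ with $0 \leq h \leq \chi_U$ (by inner regularity of Radon measures and Urysohn's lemma, cf.\ the proof of \ref{lemma: two weak topologies agree}), so $e_{\chi_U}$ is lower semicontinuous and $U \in \mathcal{A}$; moreover $\mathcal{A}$ is a $\lambda$-system (it contains $X$ since $e_{\chi_X} \equiv 1$, and one uses $\mu(X) = 1$ for proper differences and countable additivity for increasing unions). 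Since the open subsets of $X$ form a $\pi$-system generating the Borel $\sigma$-algebra, Dynkin's $\pi$--$\lambda$ theorem shows that $\mathcal{A}$ is the whole Borel $\sigma$-algebra; linearity then yields $e_g$ Borel for simple Borel $g$, and, as every bounded Borel $g$ is a uniform limit of simple Borel functions while $|e_{g_1}(\mu) - e_{g_2}(\mu)| \leq \sup \im |g_1 - g_2|$ for $\mu \in \mathbf{P}(X)$, the claim follows for all bounded Borel $g$. Finally, when $f$ is continuous, $\beta \circ f$ is bounded and continuous for every $\beta \in \mathscr{K}(Y)$, so $e_{\beta \circ f}$ is continuous by \ref{lemma: two weak topologies agree}, and hence $f_\#$ is continuous by the universal property of the initial topology on $\mathbf{P}(Y)$.

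For \eqref{theorem: push-forward and Cartesian product of Young functions 2}, one first checks that $\mu \times \nu$ is a probability Radon measure over the second countable locally compact Hausdorff space $X \times Y$, of total mass $\mu(X) \nu(Y) = 1$, see \cite[2.6.2]{Federer69_MR0257325}. By the universal property of the weak topology on $\mathbf{P}(X \times Y)$ it then suffices to prove that $(\mu, \nu) \mapsto \int \psi \ud(\mu \times \nu)$ is continuous on $\mathbf{P}(X) \times \mathbf{P}(Y)$ for each $\psi \in \mathscr{K}(X \times Y)$. Given $\varepsilon > 0$, \ref{lemma: K(X) tensor K(Y) density} furnishes $\alpha_1, \dotsc, \alpha_k \in \mathscr{K}(X)$ and $\beta_1, \dotsc, \beta_k \in \mathscr{K}(Y)$ with $\sup \{ |\psi(x, y) - \sum_{j=1}^k \alpha_j(x) \beta_j(y)| \with (x, y) \in X \times Y \} < \varepsilon$. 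By Fubini's theorem the map $(\mu, \nu) \mapsto \int \bigl( \sum_{j=1}^k \alpha_j \otimes \beta_j \bigr) \ud (\mu \times \nu) = \sum_{j=1}^k \bigl( \int \alpha_j \ud \mu \bigr) \bigl( \int \beta_j \ud \nu \bigr)$ is continuous, being a finite sum of products of continuous functions; and it differs from $(\mu, \nu) \mapsto \int \psi \ud (\mu \times \nu)$ by at most $\varepsilon$ uniformly on $\mathbf{P}(X) \times \mathbf{P}(Y)$, because every $\mu \times \nu$ has total mass $1$. Thus $(\mu, \nu) \mapsto \int \psi \ud (\mu \times \nu)$ is a uniform limit of continuous functions as $\varepsilon \downarrow 0$, hence continuous.

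The main obstacle is the Borel measurability in \eqref{theorem: push-forward and Cartesian product of Young functions 1}, concretely the claim that $e_g$ is Borel for bounded Borel $g$; its core ingredients are the lower semicontinuity of $\mu \mapsto \mu(U)$ on open sets $U$ and the Dynkin $\pi$--$\lambda$ argument, after which the passage to bounded Borel $g$ and the deduction for $f_\#$ are routine. The continuity statements, and the whole of \eqref{theorem: push-forward and Cartesian product of Young functions 2}, by contrast follow quickly from \ref{lemma: two weak topologies agree} and the density lemma \ref{lemma: K(X) tensor K(Y) density}.
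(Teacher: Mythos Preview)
Your proof is correct and follows the same overall architecture as the paper's: reduce both statements to properties of the evaluation maps $e_g(\mu) = \int g \ud \mu$, handle the continuity assertions via \ref{lemma: two weak topologies agree}, and for \eqref{theorem: push-forward and Cartesian product of Young functions 2} approximate uniformly using the density lemma \ref{lemma: K(X) tensor K(Y) density}. The one genuine difference lies in the heart of \eqref{theorem: push-forward and Cartesian product of Young functions 1}, the claim that $e_g$ is Borel for every bounded Borel $g$. You argue via Dynkin's $\pi$--$\lambda$ theorem on the class of Borel sets $A$ with $e_{\chi_A}$ Borel (open sets are in by lower semicontinuity), then pass to simple functions and uniform limits. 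The paper instead works directly at the level of functions: for each $i$ it lets $B_i$ consist of those Borel $h$ with $\sup\im|h| \leq i$ and $e_h$ Borel, observes that $B_i$ contains the continuous such functions by \ref{lemma: two weak topologies agree}, and then uses dominated convergence \cite[2.4.9]{Federer69_MR0257325} to see that $B_i$ is a Baire class in the sense of \cite[2.2.15]{Federer69_MR0257325}, hence contains all Borel functions bounded by $i$. Both are standard monotone-class arguments; yours is more widely known and entirely self-contained, while the paper's route is slightly slicker in that it bypasses characteristic and simple functions altogether. One minor point: to conclude from \cite[2.2.2]{Federer69_MR0257325} that $f_\#\mu$ is Radon you need Borel \emph{regularity}, not merely that Borel sets are $f_\#\mu$ measurable; the paper covers this by invoking \cite[2.11]{MS25_MR4864996} (together with metrizability of $Y$), and you should add a word to the same effect.
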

\begin{proof}
    We first prove \eqref{theorem: push-forward and Cartesian product of Young functions 1}. By \cite[Chapter 4, Theorem 16]{Kelley75_MR0370454}, \cite[2.11]{MS25_MR4864996}, and \cite[2.2.2]{Federer69_MR0257325}, $f_\# \mu$ is a Radon measure over $Y$. Note that it is enough to show $\mu \mapsto \int g \ud (f_\#\mu)$ is a Borel function whenever $g \in \mathscr{K}(Y)$ by \cite[2.23]{Menne16a_MR3528825}. Since $\int g \ud (f_\#\mu) = \int (g \circ f) \ud \mu$ whenever $\mu \in \mathbf{P}(X)$ and $g: Y \to \mathbf{R}$ is a bounded Borel function by \cite[2.4.18]{Federer69_MR0257325}, it reduces to show $\mu \mapsto \int h \ud \mu$ is a Borel function whenever $h: X \to \mathbf{R}$ is a bounded Borel function.
    
    For $i \in \mathscr{P}$, let $B_i$ be the family of all Borel functions $h: X \to \mathbf{R}$ with $\sup \im |h| \leq i$ such that $\mu \mapsto \int h \ud \mu$ is a Borel function on $\mathbf{P}(X)$. By \ref{lemma: two weak topologies agree}, the set $B_i$ contains all continuous functions from $X$ into $\mathbf{R} \cap \{ r \with |r| \leq i \}$. By \cite[2.4.9]{Federer69_MR0257325} and employing the terminology of \cite[2.2.15]{Federer69_MR0257325}, $B_i$ is a Baire class, hence contains all Baire functions with image contained in $\mathbf{R} \cap \{ r \with |r| \leq i \}$; applying \cite[2.2.15]{Federer69_MR0257325} with $Y = \mathbf{R} \cap \{ r \with |r| \leq i \}$, we see $B_i$ contains all Borel functions $h: X \to \mathbf{R}$ with $\sup \im |g| \leq i$. Therefore, the main conclusion of \eqref{theorem: push-forward and Cartesian product of Young functions 1} follows. For the postscript of \eqref{theorem: push-forward and Cartesian product of Young functions 1}, it is enough to show $\mu \mapsto \int g \ud (f_\#\mu) = \int (g \circ f) \ud \mu$ is continuous whenever $g \in \mathscr{K}(Y)$, which is immediate from \ref{lemma: two weak topologies agree}.

    Next, we will show \eqref{theorem: push-forward and Cartesian product of Young functions 2}. By Fubini's theorem \cite[2.6.2]{Federer69_MR0257325}, $\mu \times \nu$ is a Borel regular measure, hence by \cite[2.2.2]{Federer69_MR0257325}, a Radon measure whenever $\mu \in \mathbf{P}(X)$ and $\nu \in \mathbf{P}(Y)$. It is enough to show that $e_\phi(\mu, \nu) = \textstyle\int \phi \ud (\mu \times \nu)$ defines a continuous function on $\mathbf{P}(X) \times \mathbf{P}(Y)$ whenever $\phi \in \mathscr{K}(X \times Y)$. If $\phi \in \mathscr{K}(X \times Y)$ satisfies $\phi(x, y) = \alpha(x)\beta(y)$ whenever $(x, y) \in X \times Y$ for some $\alpha \in \mathscr{K}(X)$ and $\beta \in \mathscr{K}(Y)$, then $e_\phi$ is continuous. Therefore, in view of \ref{lemma: K(X) tensor K(Y) density}, for each $\phi \in \mathscr{K}(X \times Y)$, $e_\phi$ is the uniform limit of a sequence of continuous functions, hence itself a continuous function.
\end{proof}


\begin{remark}
    The two statements in \ref{theorem: push-forward and Cartesian product of Young functions} are variants of \cite[17.28, 17.40]{Kec95_MR1321597}.
\end{remark}


\begin{definition}
    \label{definition: pushforward of Young function}
    Suppose $Y$ and $Z$ are second-countable locally compact Hausdorff spaces, $\mu$ is a Radon measure over a locally compact Hausdorff space $X$, $f: Y \to Z$ is a Borel function, and $g$ is a $\mu$ Young function of type $Y$. Then, we define the $\mu$ Young function $f_\# g$ of type $Z$ by $(f_\# g)(x) = f_\#(g(x))$ whenever $x \in \domain g$.
\end{definition}


\begin{definition}
    \label{definition: product of Young functions}
    Suppose $X$, $Y$, and $Z$ are locally compact Hausdorff spaces, $Y$ and $Z$ are second-countable, $\mu$ is a Radon measure over $X$, and $f$ and $g$ are $\mu$ Young functions of types $Y$ and $Z$, respectively. Then, we define the $\mu$ Young function $f \times g$ of type $Y \times Z$ by $(f \times g)(x) = f(x) \times g(x)$ whenever $x \in \domain f \cap \domain g$.
\end{definition}


We finish this section by introducing the sum of two vector-valued functions and the product of two real-valued functions in the context of Young functions.


\begin{definition}[see \protect{\cite[14.20]{Kle20_MR4201399}}]
    \label{definition: convolution of measures}
    Suppose $\mu$ and $\nu$ are measures over a vector space $Y$, and $A: Y \times Y \to Y$ is defined by
    \[
        A(x, y) = x + y \quad \text{whenever $x, y \in Y$}.
    \]
    Then, the measure $\mu * \nu = A_{\#} (\mu \times \nu)$ is termed the \textit{convolution of $\mu$ and $\nu$}.
\end{definition}


\begin{remark}[see \protect{\cite[14.21]{Kle20_MR4201399}}]
    \label{remark: properties of convolution}
    Suppose $Y$ is a finite-dimensional Banach space. We readily verify the following basic properties of convolutions of measures.
    \begin{enumerate}
        \item 
            \label{remark: properties of convolution1}
            Whenever $\mu, \nu \in \mathbf{P}(Y)$, we have $\mu * \nu = \nu * \mu$.
        \item 
            \label{remark: properties of convolution2}
            If $\mu, \nu \in \mathbf{P}(Y)$, then $\mu * \nu \in \mathbf{P}(Y)$.
        \item 
            \label{remark: properties of convolution3}
            If $\mu, \nu \in \mathbf{P}(Y)$ satisfy $\mu = \sum_{x \in A} f(x) \boldsymbol{\updelta}_{x}$ and $\nu = \sum_{y \in B} g(y) \boldsymbol{\updelta}_{y}$ for some finite subsets $A$ and $B$ of $Y$ and functions $f: A \to \mathbf{R}$ and $g: B \to \mathbf{R}$, then $\mu * \nu = \sum_{(x, y) \in A \times B} f(x) g(y) \boldsymbol{\updelta}_{x + y}$.
    \end{enumerate}
\end{remark}


\begin{corollary}
    \label{corollary: convolution of Young functions}
    Suppose $Y$ is a finite-dimensional Banach space and $d$ is as in \ref{definition: pseudo-metric d}. Then, the convolution of measures
    \[
        (\mu, \nu) \mapsto \mu * \nu \quad \text{whenever $\mu, \nu \in \mathbf{P}(Y)$}
    \]
    defines a continuous function $\mathbf{P}(Y) \times \mathbf{P}(Y) \to \mathbf{P}(Y)$ with respect to the weak topologies. Furthermore, we have 
    \[
        d(\mu * \nu, \lambda * \eta) \leq d(\mu, \lambda) + d(\nu, \eta) \quad \text{whenever $\mu, \nu, \lambda, \eta \in \mathbf{P}(Y)$};
    \]
    in particular, the convolution of measures defines a Lipschitzian map $(\mathbf{P}(Y) \times \mathbf{P}(Y), \rho) \to (\mathbf{P}(Y), d)$ where $\rho$ satisfies $\rho((\mu, \nu), (\lambda, \eta)) = d(\mu, \nu) + d(\lambda, 
    \eta)$ whenever $\mu, \nu, \lambda, \eta \in \mathbf{P}(Y)$.
\end{corollary}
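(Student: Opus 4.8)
The plan is to treat the two assertions separately. \textbf{Continuity.} I would factor the convolution as the composition
\[
    \mathbf{P}(Y) \times \mathbf{P}(Y) \to \mathbf{P}(Y \times Y) \to \mathbf{P}(Y),
\]
where the first arrow is the product of measures and the second is $A_\#$ with $A(x, y) = x + y$ as in \ref{definition: convolution of measures}. The first map is continuous by \ref{theorem: push-forward and Cartesian product of Young functions}\eqref{theorem: push-forward and Cartesian product of Young functions 2}; since $Y$ and $Y \times Y$ are second-countable locally compact Hausdorff spaces and $A$ is continuous, the second map is continuous by \ref{theorem: push-forward and Cartesian product of Young functions}\eqref{theorem: push-forward and Cartesian product of Young functions 1}; that $\mu * \nu$ lies in $\mathbf{P}(Y)$ is \ref{remark: properties of convolution}\eqref{remark: properties of convolution2}. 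Composing the two continuous maps gives the first claim.

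\textbf{The metric estimate.} Here I would work with the description of $d$ from \ref{remark: equivalent def of d} using test functions $\gamma \in \mathscr{K}(Y)$ with $\lipschitz \gamma \leq 1$; passing to compactly supported $\gamma$ is essential, since for a general Lipschitzian $\gamma$ the map $\gamma \circ A$ need not be $(\mu \times \nu)$-integrable (its support in $Y \times Y$ need not be bounded), whereas for $\gamma \in \mathscr{K}(Y)$ the function $\gamma \circ A$ is bounded and continuous. Fix such a $\gamma$. By \cite[2.4.18]{Federer69_MR0257325} together with Fubini's theorem \cite[2.6.2]{Federer69_MR0257325},
\[
    \textstyle\int \gamma \ud(\mu * \nu) = \textstyle\iint \gamma(x + y) \ud \mu \, x \ud \nu \, y,
\]
and likewise for $\lambda * \eta$. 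Inserting the intermediate quantity $\iint \gamma(x + y) \ud \lambda \, x \ud \nu \, y$ and using Fubini once more expresses $\int \gamma \ud(\mu * \nu) - \int \gamma \ud(\lambda * \eta)$ as
\[
    \textstyle\int \bigl( \textstyle\int \gamma(x + y) \ud \mu \, x - \textstyle\int \gamma(x + y) \ud \lambda \, x \bigr) \ud \nu \, y + \textstyle\int \bigl( \textstyle\int \gamma(x + y) \ud \nu \, y - \textstyle\int \gamma(x + y) \ud \eta \, y \bigr) \ud \lambda \, x.
\]
For each fixed $y$ the translate $x \mapsto \gamma(x + y)$ again lies in $\mathscr{K}(Y)$ with Lipschitz constant at most $1$, so the inner difference in the first summand is at most $d(\mu, \lambda)$, and integrating against the probability measure $\nu$ leaves the bound $d(\mu, \lambda)$; the second summand is bounded by $d(\nu, \eta)$ by the symmetric argument. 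Taking the supremum over admissible $\gamma$ yields $d(\mu * \nu, \lambda * \eta) \leq d(\mu, \lambda) + d(\nu, \eta)$, and the concluding Lipschitz statement (with constant $1$) is then immediate.

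I expect the only delicate step to be the justification of the pushforward identity $\int \gamma \ud(\mu * \nu) = \iint \gamma(x + y) \ud \mu \ud \nu$ and the measurability of the inner integrals needed to invoke Fubini; restricting to $\gamma \in \mathscr{K}(Y)$ disposes of it, since then $\gamma \circ A$ is bounded and continuous and the inner integrals are continuous, hence measurable, in their free variable. Everything else is routine manipulation of the supremum defining $d$.
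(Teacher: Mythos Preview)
Your proof is correct and follows essentially the same route as the paper: continuity via the factorisation through \ref{theorem: push-forward and Cartesian product of Young functions}\eqref{theorem: push-forward and Cartesian product of Young functions 1}\eqref{theorem: push-forward and Cartesian product of Young functions 2}, and the metric estimate via \ref{remark: equivalent def of d} with $\gamma \in \mathscr{K}(Y)$, $\lipschitz \gamma \leq 1$. The only cosmetic difference is that the paper first proves the one-sided bound $d(\mu * \lambda, \nu * \lambda) \leq d(\mu, \nu)$ and then combines it with commutativity \ref{remark: properties of convolution}\eqref{remark: properties of convolution1} and the triangle inequality for $d$, whereas you carry out the same telescoping directly inside the integral; the content is identical.
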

\begin{proof}
    The main assertion follows from \ref{theorem: push-forward and Cartesian product of Young functions}\eqref{theorem: push-forward and Cartesian product of Young functions 1}\eqref{theorem: push-forward and Cartesian product of Young functions 2}. The postscript follows from \ref{remark: equivalent def of d}, \ref{remark: properties of convolution}\eqref{remark: properties of convolution1}, and the estimate
    \begin{align*}
        \textstyle\int \gamma \ud (\mu * \lambda) - \textstyle\int \gamma \ud (\nu * \lambda) 
        &= \textstyle\int \left(\int \gamma(x + y) \ud \mu \, x - \int \gamma(x + y) \ud \nu \, x \right) \ud \lambda \, y \\
        &\leq d(\mu, \nu)
    \end{align*}
    whenever $\mu, \nu, \lambda \in \mathbf{P}(Y)$ and $\gamma \in \mathscr{K}(Y)$ with $\lipschitz \gamma \leq 1$.
\end{proof}


\begin{remark}
    Similarly, if $Y = \mathbf{R}$, replacing $A$ with the multiplication on $\mathbf{R}$ induces a continuous function $\mathbf{P}(\mathbf{R}) \times \mathbf{P}(\mathbf{R}) \to \mathbf{P}(\mathbf{R})$.
\end{remark}


\begin{definition}
    \label{definition: convolution of Young functions}
    Suppose $X$ is a locally compact Hausdorff space, $Y$ is a finite-dimensional Banach space, $\mu$ is a Radon measure over $X$, and $f$ and $g$ are $\mu$ Young functions of type $Y$. Then, we define the $\mu$ Young function $f * g$ by $(f * g)(x) = f(x) * g(x)$ whenever $x \in \domain f \cap \domain g$.
\end{definition}


\begin{remark}
    By the postscript of \ref{corollary: convolution of Young functions}, we have $\lipschitz f*g \leq \lipschitz f + \lipschitz g$.
\end{remark}

\section{The test function spaces}
\label{section: the test function spaces}

\textbf{Hypotheses.} In this section, we always assume $Y$ is a finite-dimensional Banach space with $\dimension Y \geq 1$, $n \in \mathscr{P}$, and $U$ is an open subset of $\mathbf{R}^n$.


The goal of this section is to prove an embedding theorem of $\mathscr{K}(X, \mathbf{E}(Y))$ into $\mathscr{K}(X \times Y, \homomorphism(Y, \mathbf{R}))$, see \ref{theorem: general homeomorphic embedding widetilde E}, and the embedding theorems for other test function spaces of interest follow immediately. For this purpose, we first present basic results about the function space $\mathscr{K}(X, Z)$.

\begin{lemma}
    \label{lemma: properties K(X, Z)}
    Suppose $X$ is a locally compact Hausdorff space and $W$ and $Z$ are locally convex Hausdorff spaces. Then, the following four statements hold.
    \begin{enumerate}
        \item
            \label{lemma: K(X, Z) pushforward}
            If $f: W \to Z$ is a continuous linear map, then post-composition with $f$ defines a continuous linear map $\mathscr{K}(X, W) \to \mathscr{K}(X, Z)$.
        \item 
            \label{lemma: K(X, Z) product}
            If $Z$ is the product of finitely many locally convex spaces $Z_1, Z_2, \dotsc, Z_n$, then we have the isomorphism of locally convex spaces
            \[
                \mathscr{K}(X, Z) \simeq \prod_{i=1}^n \mathscr{K}(X, Z_i);
            \]
            in particular, we have $\mathscr{K}(X, Z^n) \simeq \mathscr{K}(X, Z)^n$.
        \item 
            \label{lemma: norm topology and uniform convergence}
            If $Z$ is normed and $K$ is a compact subset of $X$, then the topology of $\mathscr{K}_K(X, Z)$ is given by the norm with value $\sup\im |f|$ at $f \in \mathscr{K}_K(X, Z)$.
        \item 
            \label{lemma: strict inductive limit K(X, Z)}
            If $K_i$ is a sequence of compact subsets of $X$ such that $K_i \subset \interior K_{i+1}$ for $i \in \mathscr{P}$ and $X = \bigcup_{i=1}^\infty K_i$, then $\mathscr{K}(X, Z)$ is the strict inductive limit of $\mathscr{K}_{K_i}(X, Z)$ as $i \to \infty$.
    \end{enumerate}
\end{lemma}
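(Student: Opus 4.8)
The plan is to treat the four items in order, reducing \eqref{lemma: K(X, Z) product} to \eqref{lemma: K(X, Z) pushforward}, and to extract the inductive-limit content of \eqref{lemma: strict inductive limit K(X, Z)} from \ref{remark: criterion strict inductive limit}. For \eqref{lemma: K(X, Z) pushforward}, given a continuous linear map $f : W \to Z$, I would first observe that post-composition with $f$ maps $\mathscr{K}_K(X, W)$ into $\mathscr{K}_K(X, Z)$ for every compact $K \subseteq X$, since composing with $f$ preserves continuity and $\spt(f \circ g) \subseteq \spt g$ because $f(0) = 0$. Continuity of $\mathscr{K}_K(X, W) \to \mathscr{K}_K(X, Z)$ then follows from the estimate $\sup \im(q \circ f \circ g) \leq \sup \im(p \circ g)$, valid for any continuous seminorm $q$ on $Z$ and any continuous seminorm $p$ on $W$ with $q \circ f \leq p$. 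Composing with the canonical map $\mathscr{K}_K(X, Z) \to \mathscr{K}(X, Z)$ and appealing to \ref{remark: universal property locally convex final topology} yields the continuity of the induced linear map $\mathscr{K}(X, W) \to \mathscr{K}(X, Z)$.

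For \eqref{lemma: K(X, Z) product}, write $p_i : Z \to Z_i$ for the projections and $\iota_i : Z_i \to Z$ for the canonical injections, all of which are continuous and linear because the product is finite. Using \eqref{lemma: K(X, Z) pushforward}, \ref{lemma: Cartesian product of locally convex spaces}, and \ref{remark: universal property product locally convex space}, the maps $g \mapsto (p_i \circ g)_{i=1}^n$ and $(g_i)_{i=1}^n \mapsto \sum_{i=1}^n \iota_i \circ g_i$ are continuous linear maps between $\mathscr{K}(X, Z)$ and $\prod_{i=1}^n \mathscr{K}(X, Z_i)$; a direct computation shows they are mutually inverse, giving the asserted isomorphism of locally convex spaces, and the special case is obtained with $Z_i = Z$. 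For \eqref{lemma: norm topology and uniform convergence}, the topology of $\mathscr{K}_K(X, Z)$ is that of uniform convergence on $X$, generated by the seminorms $g \mapsto \sup \im(q \circ g)$ for continuous seminorms $q$ on $Z$; when $Z$ is normed, each such $q$ satisfies $q \leq c_q |\cdot|$ for some $c_q \in [0, \infty)$, so each of these seminorms is dominated by $c_q \sup \im |g|$, which is itself one of them — hence the single norm $g \mapsto \sup \im |g|$ generates the topology.

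For \eqref{lemma: strict inductive limit K(X, Z)}, since $\spt g$ is compact and the open sets $\interior K_i$ increase to $X$, every $g \in \mathscr{K}(X, Z)$ lies in some $\mathscr{K}_{K_i}(X, Z)$ and, likewise, every compact $L \subseteq X$ is contained in some $K_i$; thus $\mathscr{K}(X, Z) = \bigcup_{i=1}^\infty \mathscr{K}_{K_i}(X, Z)$ and $\{ K_i \with i \in \mathscr{P} \}$ is cofinal among the compact subsets of $X$. Because each $\mathscr{K}_{K_i}(X, Z)$ carries the topology of uniform convergence on all of $X$, the inclusion $\mathscr{K}_{K_i}(X, Z) \to \mathscr{K}_{K_{i+1}}(X, Z)$ merely restricts the same seminorms and is therefore a homeomorphic embedding. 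To identify the locally convex final topology on $\mathscr{K}(X, Z)$ induced by the inclusions $\mathscr{K}_{K_i}(X, Z) \to \mathscr{K}(X, Z)$ with the one of \ref{definition: space of continuous functions with compact support} induced by all $\mathscr{K}_K(X, Z) \to \mathscr{K}(X, Z)$, I would note that the former is at least as fine, being the final topology with respect to a subfamily of the defining maps; conversely, if $V \subseteq \mathscr{K}(X, Z)$ is absorbent, convex, and symmetric with preimage a neighbourhood of $0$ in every $\mathscr{K}_{K_i}(X, Z)$, then for a given compact $K$ one picks $K_i \supseteq K$ and factors the inclusion $\mathscr{K}_K(X, Z) \to \mathscr{K}(X, Z)$ through the continuous inclusion $\mathscr{K}_K(X, Z) \to \mathscr{K}_{K_i}(X, Z)$ to conclude that the preimage of $V$ in $\mathscr{K}_K(X, Z)$ is a neighbourhood of $0$; hence the two topologies coincide, and \ref{remark: criterion strict inductive limit} identifies $\mathscr{K}(X, Z)$ as the strict inductive limit of $\mathscr{K}_{K_i}(X, Z)$.

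I expect the only point requiring genuine attention to be the topology comparison in \eqref{lemma: strict inductive limit K(X, Z)}: verifying that passing to the cofinal sequence $K_i$ leaves the locally convex final topology unchanged, and that the successive inclusions of the spaces $\mathscr{K}_{K_i}(X, Z)$ are homeomorphic embeddings rather than merely continuous injections — both of which rest on the fact that each $\mathscr{K}_{K_i}(X, Z)$ is topologized by uniform convergence over the whole of $X$. Items \eqref{lemma: K(X, Z) pushforward}, \eqref{lemma: K(X, Z) product}, and \eqref{lemma: norm topology and uniform convergence} are then routine consequences of universal properties and elementary seminorm estimates.
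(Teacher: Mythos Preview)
Your proposal is correct and follows essentially the same approach as the paper: the paper dismisses \eqref{lemma: K(X, Z) pushforward} as ``straightforward from the definitions'', cites Bourbaki for \eqref{lemma: K(X, Z) product} and \eqref{lemma: norm topology and uniform convergence}, and for \eqref{lemma: strict inductive limit K(X, Z)} invokes \ref{remark: criterion strict inductive limit} and checks that the inclusions $\mathscr{K}_{K_i}(X,Z)\to\mathscr{K}_{K_j}(X,Z)$ are homeomorphic embeddings. Your write-up is simply a more self-contained version of this, and in fact makes explicit the cofinality argument (that the final topology from the sequence $K_i$ agrees with the one from all compact $K$) that the paper leaves implicit.
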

\begin{proof}
    It is straightforward to verify \eqref{lemma: K(X, Z) pushforward} from the definitions. The statements \eqref{lemma: K(X, Z) product} and \eqref{lemma: norm topology and uniform convergence} are proved in \cite[III, \S 1, No.\ 1]{Bourbaki_integration_MR2018901}. To prove \eqref{lemma: strict inductive limit K(X, Z)}, by \ref{remark: criterion strict inductive limit}, it is enough to check the inclusion map $\mathscr{K}_{K_i}(X, Z) \to \mathscr{K}_{K_j}(X, Z)$ is a homeomorphic embedding whenever $i \leq j$, and this is straightforward from the definitions.
\end{proof}


\begin{lemma}
    \label{lemma: separability of K_K(X, Z)}
    Suppose $X$ is a second-countable locally compact Hausdorff space, $K$ is a compact subset of $X$, and $Z$ is a separable Banach space.  Then, the function space $\mathscr{K}_K(X, Z)$ is a separable Banach space.
\end{lemma}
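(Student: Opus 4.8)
The plan is to reduce the statement to two classical facts: that $C(K)$ is separable for $K$ compact metrizable, and that elementary tensors are dense in $\mathscr{K}_K(X, Z)$, the latter being \ref{lemma: K(X) tensor Z density}. The Banach space assertion is quickly dealt with: by \ref{lemma: properties K(X, Z)}\eqref{lemma: norm topology and uniform convergence} the topology of $\mathscr{K}_K(X, Z)$ is induced by the norm $\sup \im |\cdot|$, and a uniformly Cauchy sequence of continuous $Z$-valued functions converges uniformly (as $Z$ is complete) to a continuous $Z$-valued function which still vanishes outside the closed set $K$; hence $\mathscr{K}_K(X, Z)$ is complete.

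For the scalar case I would argue as follows. Since $X$ is second-countable, so is the compact Hausdorff subspace $K$, which is therefore metrizable by \cite[Chapter 4, Theorem 16]{Kelley75_MR0370454}; fixing a metric $\rho$ on $K$ and a countable dense set $A \subset K$, the constant $1$ together with the functions $x \mapsto \rho(x, a)$ for $a \in A$ generate over $\mathbf{Q}$ a countable subalgebra of $C(K)$ that separates points, so by the Stone--Weierstrass theorem $C(K)$ is separable. The restriction map $f \mapsto f | K$ carries $\mathscr{K}_K(X)$ linearly and isometrically into $C(K)$ --- it is norm-preserving and injective because members of $\mathscr{K}_K(X)$ vanish off $K$ --- whence $\mathscr{K}_K(X)$, bearing the subspace topology of a separable metric space, is separable.

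Finally I would pass to $Z$-valued functions. Let $D$ be a countable dense subset of $\mathscr{K}_K(X)$, let $E$ be a countable dense subset of $Z$, and let $S$ be the countable set of functions $x \mapsto \sum_{i=1}^n \alpha_i(x) z_i$ with $n \in \mathscr{P}$, $\alpha_i \in D$, and $z_i \in E$. Given an element of the image of the canonical map $\mathscr{K}_K(X) \otimes Z \to \mathscr{K}_K(X, Z)$, written $x \mapsto \sum_{i=1}^n \beta_i(x) w_i$, one chooses $\alpha_i \in D$ and $z_i \in E$ with $\sup \im |\beta_i - \alpha_i|$ and $|w_i - z_i|$ small (thereby keeping $\sup \im |\alpha_i|$ bounded) so that $\sum_{i=1}^n \alpha_i z_i \in S$ is arbitrarily close to it in supremum norm; thus $S$ is dense in that image, which is dense in $\mathscr{K}_K(X, Z)$ by \ref{lemma: K(X) tensor Z density}, and $\mathscr{K}_K(X, Z)$ is separable. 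The only step carrying any content is the separability of $C(K)$; once that is secured, the completeness and the tensor approximation are routine.
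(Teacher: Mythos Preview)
Your proof is correct and follows essentially the same route as the paper's: establish completeness directly, obtain separability of $\mathscr{K}_K(X)$, and then invoke \ref{lemma: K(X) tensor Z density} to pass to $Z$-valued functions. The only difference is that the paper cites \cite[2.2, 2.23]{Menne16a_MR3528825} for the separability of $\mathscr{K}_K(X)$, whereas you supply a self-contained Stone--Weierstrass argument via metrizability of $K$; this is a cosmetic difference, not a structural one.
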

\begin{proof}
    Since $Z$ is a Banach space and $\mathscr{K}_K(X, Z)$ is endowed with the supremum norm, it is straightforward to check that $\mathscr{K}_K(X, Z)$ is a Banach space. Since $\mathscr{K}_K(X)$ is separable by \cite[2.2, 2.23]{Menne16a_MR3528825}, it follows from \ref{lemma: K(X) tensor Z density} that $\mathscr{K}_K(X, Z)$ is separable.
\end{proof}


\begin{lemma}
    \label{lemma: neighborhoods of 0 in K(X, Z)}
    Suppose $Z$ is a normed space, $X$ is a locally compact Hausdorff space, $K_i$ is a sequence of compact subsets of $X$ such that $K_0 = \varnothing$, $K_{i-1} \subset \interior K_i$ whenever $i \in \mathscr{P}$, and $X = \bigcup_{i=1}^\infty K_i$. Then, the subsets
    \[
        V_\alpha = \mathscr{K}(X, Z) \cap \{ f \with |f(x)| \leq \alpha(i)^{-1} \text{ for $i \in \mathscr{P}$ and $x \in X \without K_{i-1}$} \}
    \]
    corresponding to each $\alpha \in \mathscr{P}^\mathscr{P}$ form a fundamental system of neighborhoods of $0$ in $\mathscr{K}(X, Z)$.
\end{lemma}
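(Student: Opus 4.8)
The plan is to reduce to the explicit description of the locally convex final topology. By \ref{lemma: properties K(X, Z)}\eqref{lemma: strict inductive limit K(X, Z)} and \ref{remark: criterion strict inductive limit}, $\mathscr{K}(X,Z)$ is the strict inductive limit of the spaces $\mathscr{K}_{K_i}(X,Z)$, each carrying the supremum-norm topology by \ref{lemma: properties K(X, Z)}\eqref{lemma: norm topology and uniform convergence}. By \ref{definition: locally convex final topology}, a fundamental system of neighbourhoods of $0$ in $\mathscr{K}(X,Z)$ is the family $\mathcal{B}$ of all absorbent, convex, symmetric $V \subseteq \mathscr{K}(X,Z)$ such that $V \cap \mathscr{K}_{K_i}(X,Z)$ is a neighbourhood of $0$ in $\mathscr{K}_{K_i}(X,Z)$ for every $i \in \mathscr{P}$. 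Hence it suffices to prove that (a) every $V_\alpha$ belongs to $\mathcal{B}$, and (b) every $V \in \mathcal{B}$ contains some $V_\alpha$.

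For (a): convexity and symmetry of $V_\alpha$ are immediate. To see $V_\alpha$ is absorbent, given $f \in \mathscr{K}(X,Z)$ pick $N$ with $\spt f \subseteq K_N$; the constraint for indices $i > N$ is then vacuous since $f$ vanishes on $X \without K_{i-1}$, and any $t > 0$ with $t \sup \im |f| \le \min\{\alpha(1)^{-1}, \dots, \alpha(N)^{-1}\}$ puts $tf$ in $V_\alpha$. To see $V_\alpha \cap \mathscr{K}_{K_i}(X,Z)$ is a neighbourhood of $0$: if $f \in \mathscr{K}_{K_i}(X,Z)$ with $\sup \im |f| \le \min\{\alpha(1)^{-1}, \dots, \alpha(i)^{-1}\}$, then $|f(x)| \le \alpha(j)^{-1}$ holds on all of $X$ for $j \le i$, while for $j > i$ the function $f$ vanishes on $X \without K_i \supseteq X \without K_{j-1}$; so $f \in V_\alpha$, and a supremum-norm ball sits inside $V_\alpha \cap \mathscr{K}_{K_i}(X,Z)$.

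For (b): let $V \in \mathcal{B}$, and for each $i \in \mathscr{P}$ pick $r_i > 0$ with $\{ g \in \mathscr{K}_{K_{i+1}}(X,Z) \with \sup \im |g| \le r_i \} \subseteq V$. Using Urysohn's lemma for locally compact Hausdorff spaces, choose inductively $\theta_i \in \mathscr{K}(X)$ with $0 \le \theta_i \le 1$, $\theta_i = 1$ on $K_i \cup \spt \theta_{i-1}$, and $\spt \theta_i \subseteq \interior K_{i+1}$, starting from $\theta_0 = 0$; then $\theta_{i-1} \le \theta_i$, so $\psi_i := \theta_i - \theta_{i-1}$ satisfies $0 \le \psi_i \le 1$, $\spt \psi_i \subseteq \interior K_{i+1}$, $\psi_i$ vanishes on $\interior K_{i-1} \supseteq K_{i-2}$, the sum $\sum_{i=1}^\infty \psi_i$ is locally finite with value $1$ on $X$, and for any $f \in \mathscr{K}(X,Z)$ only finitely many $\psi_i f$ are nonzero. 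Now choose $\alpha \in \mathscr{P}^\mathscr{P}$ with $\alpha(1)^{-1} \le \min\{2^{-1} r_1, 2^{-2} r_2\}$ and $\alpha(k)^{-1} \le 2^{-(k+1)} r_{k+1}$ for $k \ge 2$. Given $f \in V_\alpha$, each $\psi_i f$ lies in $\mathscr{K}_{K_{i+1}}(X,Z)$, and since $\spt \psi_i \subseteq X \without K_{i-2}$ (reading $K_{-1} = K_0 = \varnothing$) the defining inequality of $V_\alpha$ gives $\sup \im |\psi_i f| \le \alpha(i-1)^{-1} \le 2^{-i} r_i$ for $i \ge 2$ and $\sup \im |\psi_1 f| \le \alpha(1)^{-1} \le 2^{-1} r_1$; hence $2^i \psi_i f$ lies in the $r_i$-ball of $\mathscr{K}_{K_{i+1}}(X,Z)$, so $2^i \psi_i f \in V$. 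As $f = \sum_i 2^{-i} (2^i \psi_i f)$ is a finite sum and $0 \in V$, convexity of $V$ yields $f \in V$; thus $V_\alpha \subseteq V$, proving (b).

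The main obstacle is the construction and index bookkeeping of the partition of unity $\psi_i$ subordinate to the exhaustion $(K_i)$: one must arrange that each piece $\psi_i f$ lands in the correct step $\mathscr{K}_{K_{i+1}}(X,Z)$ of the inductive limit and has supremum norm controlled by the single coordinate $\alpha(i-1)$ of $\alpha$, so that the geometric weights $2^{-i}$ reassemble $f$ inside $V$. The remaining verifications—absorbency, the ball inclusions, and local finiteness—are routine.
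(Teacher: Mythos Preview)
Your proof is correct and follows essentially the same approach as the paper's: both verify that each $V_\alpha$ is a neighbourhood of $0$ by intersecting with the steps of the inductive limit, and both prove that an arbitrary convex neighbourhood $V$ contains some $V_\alpha$ by constructing a partition of unity subordinate to the exhaustion $(K_i)$ and writing $f$ as a finite convex combination of pieces lying in $V$. The only cosmetic differences are that the paper normalises its partition as $\sum_j 2^{-j}\phi_j = 1$ with $\spt \phi_j \subset (\interior K_{j+1}) \setminus K_{j-1}$ (so the index bookkeeping is shifted by one compared to your $\psi_i$), and it asserts the existence of the $\phi_j$ rather than building them explicitly via Urysohn as you do.
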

\begin{proof}
    Clearly, $V_\alpha$ is a neighborhood of $0$ in $\mathscr{K}(X, Z)$ whenever $\alpha \in \mathscr{P}^\mathscr{P}$. Let $V$ be a convex neighborhood of $0$ in $\mathscr{K}(X, Z)$. There exists $\beta \in \mathscr{P}^\mathscr{P}$ such that 
    \[
        \mathbf{B}(0, \beta(i)^{-1}) \cap \mathscr{K}_{K_i}(X, Z) \subset V \cap \mathscr{K}_{K_i}(X, Z) \quad \text{whenever $i \in \mathscr{P}$}.
    \]
    Choose non-negative functions $\phi_i \in \mathscr{K}(X)$ for $i \in \mathscr{P}$ such that
    \[
        \spt \phi_i \subset (\interior K_{i+1}) \without K_{i-1} \quad \text{and} \quad \sum_{j=1}^\infty 2^{-j} \phi_j(x) = 1
    \]
    whenever $i \in \mathscr{P}$ and $x \in X$, and choose $\alpha \in \mathscr{P}^\mathscr{P}$ such that 
    \[
        \alpha(i)^{-1} \sup\im |\phi_i| \leq \beta(i+1)^{-1} \quad \text{whenever $i \in \mathscr{P}$}.
    \]
    Note that if $i, j \in \mathscr{P}$ and $f \in V_\alpha \cap \mathscr{K}_{K_i}(X, Z)$, then 
    \[
        \spt \phi_j f \subset (\interior K_{j+1}) \without K_{j-1} \quad \text{and} \quad \sup \im |\phi_j f| \leq \beta(j+1)^{-1};
    \]
    therefore
    \begin{gather*}
        \phi_j f \in V \cap \mathscr{K}_{K_{j+1}}(X, Z), \quad \text{if $j \leq i$}, \\
        \phi_j f = 0, \quad \text{if $j > i$}.
    \end{gather*}
    Since $V$ is convex, $0 \in V$, and $f = \sum_{j = 1}^i 2^{-j} (\phi_j f) + 2^{-i} \cdot 0$, we conclude that $f \in V$, hence $V_\alpha \subset V$.
\end{proof}


\begin{remark}
    The proof of \ref{lemma: neighborhoods of 0 in K(X, Z)} is adapted from the proof of \cite[p.\ 66, Th\'eor\`eme II]{Sch66_MR0209834}.
\end{remark}


Next, we will study the topological vector structure of $\mathbf{E}(Y)$.


\begin{lemma}
    \label{lemma: properties of E}
    The following three statements hold.
    \begin{enumerate}
        \item 
            \label{lemma: properties of E 1}
            The derivative defines a continuous monomorphism
            \[
                \mathbf{E}(Y) \to \mathscr{K}(Y, \homomorphism(Y, \mathbf{R}))
            \]
            and its restriction to $\mathbf{E}_s(Y)$ gives a norm-preserving embedding
            \[
                \mathbf{E}_s(Y) \to \mathscr{K}_{\mathbf{B}(0, s)}(Y, \homomorphism(Y, \mathbf{R}))
            \]
            whenever $0 \leq s < \infty$.
        \item 
            \label{lemma: properties of E 2}
            $\mathbf{E}_s(Y)$ is a separable Banach space whenever $0 \leq s < \infty$.
        \item 
            \label{lemma: properties of E 3}
            $\mathbf{E}(Y)$ is the strict inductive limit of $\mathbf{E}_i(Y)$ as $i \to \infty$.
    \end{enumerate}
\end{lemma}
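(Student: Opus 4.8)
\emph{Plan.} I would establish the three assertions in order; all but one step are formal, and I expect the completeness claim in \eqref{lemma: properties of E 2} to be the only place where a genuine analytic argument is needed.

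\emph{Assertion \eqref{lemma: properties of E 1}.} If $0 \leq s < \infty$ and $\gamma \in \mathbf{E}_s(Y)$, then $\der \gamma$ is a continuous $\homomorphism(Y, \mathbf{R})$-valued function on $Y$ whose support is a closed subset of the compact set $\mathbf{B}(0, s)$, so $\der \gamma \in \mathscr{K}_{\mathbf{B}(0, s)}(Y, \homomorphism(Y, \mathbf{R}))$; by \ref{lemma: properties K(X, Z)}\eqref{lemma: norm topology and uniform convergence} together with the definition of the norm on $\mathbf{E}_s(Y)$, the linear map $\der : \mathbf{E}_s(Y) \to \mathscr{K}_{\mathbf{B}(0, s)}(Y, \homomorphism(Y, \mathbf{R}))$ thus obtained is norm-preserving, and it is injective because $\der \gamma = 0$ forces $\gamma$ to be constant (as $Y$ is connected), hence $\gamma = 0$ by $\gamma(0) = 0$. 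To see that the linear map $\der : \mathbf{E}(Y) \to \mathscr{K}(Y, \homomorphism(Y, \mathbf{R}))$ is a continuous monomorphism, I would invoke the universal property \ref{remark: universal property locally convex final topology}: each composite $\mathbf{E}_s(Y) \to \mathbf{E}(Y) \xrightarrow{\der} \mathscr{K}(Y, \homomorphism(Y, \mathbf{R}))$ equals the norm-preserving map above followed by the canonical continuous inclusion $\mathscr{K}_{\mathbf{B}(0, s)}(Y, \homomorphism(Y, \mathbf{R})) \to \mathscr{K}(Y, \homomorphism(Y, \mathbf{R}))$, hence is continuous, and injectivity is immediate since $\mathbf{E}(Y) = \bigcup_{0 \leq s < \infty} \mathbf{E}_s(Y)$.

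\emph{Assertion \eqref{lemma: properties of E 2}.} By \eqref{lemma: properties of E 1}, $\der$ realises $\mathbf{E}_s(Y)$ as an isometric copy of a linear subspace of $\mathscr{K}_{\mathbf{B}(0, s)}(Y, \homomorphism(Y, \mathbf{R}))$, which is a separable Banach space by \ref{lemma: separability of K_K(X, Z)} (here $Y$ is second-countable and $\homomorphism(Y, \mathbf{R})$ is a finite-dimensional, hence separable, Banach space); a subspace of a separable metric space is separable, so it remains only to prove completeness. Given a Cauchy sequence $(\gamma_i)$ in $\mathbf{E}_s(Y)$, the derivatives $\der \gamma_i$ form a Cauchy sequence in $\mathscr{K}_{\mathbf{B}(0, s)}(Y, \homomorphism(Y, \mathbf{R}))$, hence converge uniformly to some $g$ there; writing $\gamma_i(y) = \int_0^1 \langle y, \der \gamma_i(ty) \rangle \ud t$ by the fundamental theorem of calculus along segments (using $\gamma_i(0) = 0$), uniform convergence of the derivatives yields $\gamma_i(y) \to \gamma(y) := \int_0^1 \langle y, g(ty) \rangle \ud t$ for every $y \in Y$, and the classical theorem on differentiation of uniformly convergent sequences then shows $\gamma$ is of class $1$ with $\der \gamma = g$. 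Since $\gamma(0) = 0$ and $\spt \der \gamma = \spt g \subset \mathbf{B}(0, s)$, we obtain $\gamma \in \mathbf{E}_s(Y)$, and $\gamma_i \to \gamma$ in $\mathbf{E}_s(Y)$ because $\sup \im \|\der \gamma_i - \der \gamma\| = \sup \im \|\der \gamma_i - g\| \to 0$. This completeness step is the one I expect to be the main obstacle, although it is classical.

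\emph{Assertion \eqref{lemma: properties of E 3}.} Since the norm on $\mathbf{E}_s(Y)$ does not depend on $s$, the inclusion $\mathbf{E}_s(Y) \to \mathbf{E}_t(Y)$ is norm-preserving, hence a homeomorphic embedding, whenever $0 \leq s \leq t < \infty$; thus $(\mathbf{E}_i(Y))_{i \in \mathscr{P}}$ with these inclusions forms an inductive system of Banach spaces whose bonding maps are homeomorphic embeddings, with $\mathbf{E}(Y) = \bigcup_{i \in \mathscr{P}} \mathbf{E}_i(Y)$. By \ref{remark: inductive limit of locally convex spaces} and \ref{remark: criterion strict inductive limit} it then suffices to check that the topology on $\mathbf{E}(Y)$ from \ref{definition: function space E} --- the locally convex final topology induced by the inclusions $\mathbf{E}_s(Y) \to \mathbf{E}(Y)$ for all $0 \leq s < \infty$ --- coincides with the locally convex final topology induced by the inclusions $\mathbf{E}_i(Y) \to \mathbf{E}(Y)$ for $i \in \mathscr{P}$. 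This I would deduce from \ref{lemma: double locally convex final topology} applied with the partition $[0, \infty) = [0, 1] \cup \bigcup_{i = 2}^\infty (i - 1, i]$, the spaces $\mathbf{E}_s(Y)$ indexed by the ``fine'' parameter $s$ and the spaces $\mathbf{E}_i(Y)$ indexed by the ``coarse'' parameter $i$: for each $i \in \mathscr{P}$, the locally convex final topology on $\mathbf{E}_i(Y)$ induced by the inclusions $\mathbf{E}_s(Y) \to \mathbf{E}_i(Y)$ with $s$ in the $i$-th block of the partition is exactly the norm topology, because $i$ itself belongs to that block (so the identity of $\mathbf{E}_i(Y)$ is among these inclusions, forcing the final topology to be no finer than the norm topology), while each such inclusion is norm-preserving (forcing it to be no coarser). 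Hence $\mathbf{E}(Y)$ is the strict inductive limit of $\mathbf{E}_i(Y)$ as $i \to \infty$.
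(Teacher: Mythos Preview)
Your proof is correct and follows essentially the same route as the paper's: the norm-preserving embedding on each $\mathbf{E}_s(Y)$ plus the universal property for \eqref{lemma: properties of E 1}, a Cauchy-sequence argument using uniform convergence of the derivatives for completeness in \eqref{lemma: properties of E 2} together with \ref{lemma: separability of K_K(X, Z)} for separability, and the norm-preserving inclusions plus \ref{remark: criterion strict inductive limit} for \eqref{lemma: properties of E 3}. Your treatment of \eqref{lemma: properties of E 3} is in fact more careful than the paper's---you explicitly verify via \ref{lemma: double locally convex final topology} that the final topology induced by all $\mathbf{E}_s(Y)$ agrees with that induced by the countable cofinal family $\mathbf{E}_i(Y)$, a point the paper leaves implicit.
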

\begin{proof}
    We first prove \eqref{lemma: properties of E 1}. From the definition, we see that the derivative defines a norm-preserving embedding $\mathbf{E}_s(Y) \to \mathscr{K}_{\mathbf{B}(0, s)}(Y, \homomorphism(Y, \mathbf{R}))$ whenever $0 \leq s < \infty$, hence a continuous monomorphism $\mathbf{E}(Y) \to \mathscr{K}(Y, \homomorphism(Y, \mathbf{R}))$ by the universal property \ref{remark: universal property locally convex final topology}. The second assertion then follows from \ref{lemma: properties K(X, Z)}\eqref{lemma: strict inductive limit K(X, Z)} and \ref{lemma: properties strict inductive limit}\eqref{lemma: properties strict inductive limit1}.

    Next, we aim to show \eqref{lemma: properties of E 2}. Let $f_i$ be a Cauchy sequence in $\mathbf{E}_s(Y)$. Then, $\der f_i$ is a Cauchy sequence in $\mathscr{K}_{\mathbf{B}(0, s)}(Y, \homomorphism(Y, \mathbf{R}))$; by \ref{remark: basic estimate E_s}, the sequence $f_i$ is also Cauchy with respect to the supremum metric. Therefore, their pointwise limits $f: Y \to \mathbf{R}$ and $F: Y \to \homomorphism(Y, \mathbf{R})$ are both continuous. By Taylor's theorem, we can show that $\der f$ exists and equals $F$. It follows that $\mathbf{E}_s(Y)$ is complete. The separability of $\mathbf{E}_s(Y)$ follows from \eqref{lemma: properties of E 1} and \ref{lemma: separability of K_K(X, Z)}.

    Finally, since the inclusion map $\mathbf{E}_r(Y) \to \mathbf{E}_s(Y)$ is a norm-preserving embedding whenever $0 \leq r \leq s < \infty$, \eqref{lemma: properties of E 3} follows from \ref{remark: criterion strict inductive limit}.
\end{proof}


\begin{remark}
    \label{remark: sujectivity continuous monomorphism E}
    If $Y = \mathbf{R}$, then both maps in \ref{lemma: properties of E}\eqref{lemma: properties of E 1} are isomorphisms of locally convex spaces since the formula
    \[
        \gamma(y) = \textstyle\int_0^1 \langle y, f(ty) \rangle \ud t \quad \text{whenever $y \in \mathbf{R}$}
    \]
    defines a member in $\mathbf{E}_s(\mathbf{R})$ whenever $f \in \mathscr{K}_{\mathbf{B}(0, s)}(\mathbf{R}, \homomorphism(\mathbf{R}, \mathbf{R}))$ and $0 \leq s < \infty$. If $Y = \mathbf{R}^k$ with $k \geq 2$, then both maps in \ref{lemma: properties of E}\eqref{lemma: properties of E 1} are not surjective; in fact, we may choose a non-negative function $\omega \in \mathscr{D}(\mathbf{R}, \mathbf{R})$ such that $\omega(r) = 1$ for $|r| \leq 1$ and define
    \[
        f(x) = \omega(|x)|) (x_2, -x_1, 0, \dotsc, 0) \quad \text{whenever $x = (x_1, x_2, \dots, x_k) \in \mathbf{R}^k$}.
    \]
    Then, there exists no $\gamma \in \mathbf{E}(\mathbf{R}^k)$ such that $\der \gamma = f$; here we identify $\mathbf{R}^k$ with $\homomorphism(\mathbf{R}^k, \mathbf{R})$.
\end{remark}


To show the map $\mathbf{E}(Y) \to \mathscr{K}(Y, \homomorphism(Y, \mathbf{R}))$ as in \ref{lemma: properties of E}\eqref{lemma: properties of E 1} is still a homeomorphic embedding for $\dimension Y \geq 2$, we need to study the fundamental system of neighborhoods of $0$ in $\mathbf{E}(Y)$; for this purpose, we introduce another function space $\widetilde{\mathbf{E}}(Y)$ that is isomorphic to $\mathbf{E}(Y)$ as locally convex spaces when $\dimension Y \geq 2$, but its members have compact support.


\begin{definition}
    \label{definition: function space widetilde E}
    We define
    \[
        \widetilde{\mathbf{E}}_s(Y) = \{ \widetilde{\gamma} \with \text{$\widetilde{\gamma}: Y \to \mathbf{R}$ is of class 1, $\spt \widetilde{\gamma} \subset \mathbf{B}(0, s)$} \} \quad \text{whenever $0 \leq s < \infty$}
    \]
    endowed with the norm $\widetilde{\gamma} \mapsto \sup \im \|\der \widetilde{\gamma}\|$, and the space
    \[
        \widetilde{\mathbf{E}}(Y) = \bigcup \{ \widetilde{\mathbf{E}}_s(Y) \with 0 \leq s < \infty \}
    \]
    is endowed with the locally convex final topology induced by the inclusion maps $\widetilde{\mathbf{E}}_s(Y) \to \widetilde{\mathbf{E}}(Y)$. 
\end{definition}


\begin{remark}
    \label{remark: properties of widetilde E}
    Similar arguments as in the proof of \ref{lemma: properties of E} show that $\widetilde{\mathbf{E}}(Y)$ is the strict inductive limit of the separable Banach spaces $\widetilde{\mathbf{E}}_i(Y)$ as $i \to \infty$.
\end{remark}


\begin{remark}
    \label{remark: relation E and widetilde E}
    The map $\widetilde{\gamma} \to \widetilde{\gamma} - \widetilde{\gamma}(0)$ defines an norm-preserving embedding from $\widetilde{\mathbf{E}}_s(Y)$ into $\mathbf{E}_s(Y)$ whenever $0 \leq s < \infty$, hence by \ref{remark: universal property locally convex final topology} a continuous monomorphism from $\widetilde{\mathbf{E}}(Y)$ into $\mathbf{E}(Y)$, and therefore by \ref{lemma: properties K(X, Z)}\eqref{lemma: K(X, Z) pushforward} post-composition defines a continuous monomorphism $I: \mathscr{K}(X, \widetilde{\mathbf{E}}(Y)) \to \mathscr{K}(X, \mathbf{E}(Y))$ whenever $X$ is a locally compact Hausdorff space. For $\dim Y \geq 2$, these maps are isomorphisms of locally convex spaces.
\end{remark}


\begin{lemma}
    \label{lemma: neighborhoods of 0 in widetilde E}
    Suppose $L_0 = \varnothing$ and $L_i = \mathbf{B}(0, i) \cap Y$ for $i \in \mathscr{P}$. Then, the subsets $\widetilde{W}_\alpha$ consisting of $\widetilde{\gamma} \in \widetilde{\mathbf{E}}(Y)$ satisfying
    \[
         |\widetilde{\gamma}(y)|+ \|\der \widetilde{\gamma}(y)\| \leq \alpha(i)^{-1} \quad \text{whenever $i \in \mathscr{P}$ and $y \in Y \without L_{i-1}$}
    \]
    corresponding to each $\alpha \in \mathscr{P}^\mathscr{P}$ form a fundamental system of neighborhoods of $0$ in $\widetilde{\mathbf{E}}(Y)$.
\end{lemma}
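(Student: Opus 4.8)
The plan is to follow the proof of \ref{lemma: neighborhoods of 0 in K(X, Z)} closely, arranging the partition-of-unity argument so that it controls the functions and their first derivatives at once.

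First I would verify that each $\widetilde{W}_\alpha$ (for $\alpha\in\mathscr{P}^\mathscr{P}$) is a neighborhood of $0$ in $\widetilde{\mathbf{E}}(Y)$. The set $\widetilde{W}_\alpha$ is visibly convex, symmetric and absorbent, so by \ref{remark: properties of widetilde E}, \ref{lemma: properties strict inductive limit}\eqref{lemma: properties strict inductive limit1} and \ref{definition: locally convex final topology} it suffices to check that $\widetilde{W}_\alpha\cap\widetilde{\mathbf{E}}_i(Y)$ is a neighborhood of $0$ in the Banach space $\widetilde{\mathbf{E}}_i(Y)$ for each $i\in\mathscr{P}$. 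For $\widetilde{\gamma}\in\widetilde{\mathbf{E}}_i(Y)$ both $\widetilde{\gamma}$ and $\der\widetilde{\gamma}$ vanish on $Y\without L_i$, so the defining inequality of $\widetilde{W}_\alpha$ with index $j>i$ holds trivially, and for $j\le i$ it holds once $\sup\im|\widetilde{\gamma}|+\sup\im\|\der\widetilde{\gamma}\|\le\min\{\alpha(j)^{-1}:1\le j\le i\}$; since members of $\widetilde{\mathbf{E}}_i(Y)$ vanish off $\mathbf{B}(0,i)$, the fundamental theorem of calculus bounds $\sup\im|\widetilde{\gamma}|$ by $(i+1)\sup\im\|\der\widetilde{\gamma}\|$, so this is guaranteed on a ball about $0$ in $\widetilde{\mathbf{E}}_i(Y)$.

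Next, given a neighborhood $\widetilde{V}$ of $0$ in $\widetilde{\mathbf{E}}(Y)$, which we may take to be convex, I would produce $\alpha$ with $\widetilde{W}_\alpha\subset\widetilde{V}$. Using \ref{remark: properties of widetilde E} and \ref{lemma: properties strict inductive limit}\eqref{lemma: properties strict inductive limit1} once more, for each $i\in\mathscr{P}$ choose $\beta(i)\in\mathscr{P}$ with $\mathbf{B}(0,\beta(i)^{-1})\cap\widetilde{\mathbf{E}}_i(Y)\subset\widetilde{V}$. Then fix nonnegative $\phi_j\in\mathscr{D}(Y,\mathbf{R})$ with
\[
    \spt\phi_j\subset(\interior L_{j+1})\without L_{j-1}\quad\text{and}\quad\sum_{j=1}^\infty 2^{-j}\phi_j(y)=1\quad\text{whenever $y\in Y$},
\]
obtained by rescaling a smooth partition of unity subordinate to the open cover $\{(\interior L_{j+1})\without L_{j-1}:j\in\mathscr{P}\}$ of $Y$, and choose $\alpha\in\mathscr{P}^\mathscr{P}$ so that $\alpha(j)^{-1}(\sup\im|\phi_j|+\sup\im\|\der\phi_j\|)<\beta(j+1)^{-1}$ for every $j\in\mathscr{P}$. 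To conclude, take $\widetilde{\gamma}\in\widetilde{W}_\alpha$ and $N\in\mathscr{P}$ with $\spt\widetilde{\gamma}\subset L_N$, so $\phi_j\widetilde{\gamma}=0$ for $j>N$; for each $j$ one has $\phi_j\widetilde{\gamma}\in\widetilde{\mathbf{E}}_{j+1}(Y)$, and the product rule $\der(\phi_j\widetilde{\gamma})=\phi_j\,\der\widetilde{\gamma}+\widetilde{\gamma}\,\der\phi_j$, together with $\spt\phi_j\subset Y\without L_{j-1}$ and $\widetilde{\gamma}\in\widetilde{W}_\alpha$, gives $\sup\im\|\der(\phi_j\widetilde{\gamma})\|\le(\sup\im|\phi_j|+\sup\im\|\der\phi_j\|)\alpha(j)^{-1}<\beta(j+1)^{-1}$, hence $\phi_j\widetilde{\gamma}\in\widetilde{V}$. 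Since $\widetilde{\gamma}=\sum_{j=1}^N 2^{-j}(\phi_j\widetilde{\gamma})+2^{-N}\cdot 0$ is a convex combination of elements of $\widetilde{V}$, convexity of $\widetilde{V}$ gives $\widetilde{\gamma}\in\widetilde{V}$.

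The step I expect to require the most care — and the only real departure from \ref{lemma: neighborhoods of 0 in K(X, Z)} — is the estimate on $\der(\phi_j\widetilde{\gamma})$: the defining condition of $\widetilde{W}_\alpha$ is imposed on the combined quantity $|\widetilde{\gamma}(y)|+\|\der\widetilde{\gamma}(y)\|$ precisely so that, on $\spt\phi_j$ (which is disjoint from $L_{j-1}$), both terms of the product rule are dominated by $\alpha(j)^{-1}$, making the $\widetilde{\mathbf{E}}_{j+1}(Y)$-norm — the supremum of $\|\der(\cdot)\|$ — small; the remaining points are a routine transcription of the cited proof.
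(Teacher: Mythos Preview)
Your proposal is correct and follows essentially the same approach as the paper's own proof: both verify that each $\widetilde{W}_\alpha$ is a neighborhood of $0$ via the fundamental theorem of calculus on $\widetilde{\mathbf{E}}_i(Y)$, and both absorb a given convex neighborhood $V$ by the same partition-of-unity decomposition $\widetilde{\gamma}=\sum_{j}2^{-j}(\phi_j\widetilde{\gamma})$ with $\spt\phi_j\subset(\interior L_{j+1})\without L_{j-1}$ and the product-rule estimate $\sup\im\|\der(\phi_j\widetilde{\gamma})\|\le(\sup\im|\phi_j|+\sup\im\|\der\phi_j\|)\alpha(j)^{-1}$. The only cosmetic differences are your use of $\phi_j\in\mathscr{D}(Y,\mathbf{R})$ versus the paper's class-$1$ functions and the constant in the sup-bound on $|\widetilde{\gamma}|$.
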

\begin{proof}
    Clearly, the set $\widetilde{W}_\alpha$ is convex, symmetric, and absorbent whenever $\alpha \in \mathscr{P}^\mathscr{P}$. Note that if 
    \begin{gather*}
        i \in \mathscr{P}, \quad \varepsilon = (2i)^{-1} \inf\{ \alpha(j)^{-1} \with j = 1, 2, \dotsc, i \} \\
        \widetilde{\gamma} \in \mathbf{B}(0, \varepsilon) \cap \widetilde{\mathbf{E}}_i(Y), \quad v \in Y, \quad |v| = 1,  \quad \text{and} \quad y(t) = (i-t)v,
    \end{gather*}
    then we have
    \begin{align*}
        |\widetilde{\gamma}(y(t))| 
        &\leq \textstyle\int_0^t |\langle -v, \der \widetilde{\gamma}(y(s))| \ud \mathscr{L}^1 \, s \\
        &\leq \textstyle\int_0^t \|\der \widetilde{\gamma}(y(s))\| \ud \mathscr{L}^1 \, s \\
        &\leq i \varepsilon 
    \end{align*}
    whenever $0 \leq t \leq i$. Thus, $\mathbf{B}(0, \varepsilon) \cap \widetilde{\mathbf{E}}_i(Y) \subset \widetilde{W}_\alpha \cap \widetilde{\mathbf{E}}_i(Y)$, and we conclude $\widetilde{W}_\alpha$ is a neighborhood of $0$ in $\widetilde{\mathbf{E}}(Y)$ whenever $\alpha \in \mathscr{P}^\mathscr{P}$.
    
    Let $V$ be a convex neighborhood of $0$ in $\widetilde{\mathbf{E}}(Y)$. Then, there exists $\beta \in \mathscr{P}^\mathscr{P}$ such that
    \[
        \mathbf{B}(0, \beta(i)^{-1}) \cap \widetilde{\mathbf{E}}_i(Y) \subset V \cap \widetilde{\mathbf{E}}_i(Y) \quad \text{whenever $i \in \mathscr{P}$}.
    \]
    Choose non-negative functions $\phi_i: Y \to \mathbf{R}$ of class $1$ for $i \in \mathscr{P}$ such that
    \[
        \spt \phi_i \subset (\interior L_{i+1}) \without L_{i-1} \quad \text{and} \quad \sum_{i=1}^\infty 2^{-i} \phi_i(y) = 1
    \]
    whenever $i \in \mathscr{P}$ and $y \in Y$, and choose $\alpha \in \mathscr{P}^\mathscr{P}$ such that 
    \[
        \alpha(i)^{-1} \sup\im (|\phi_i| + \|\der \phi_i\|) \leq \beta(i+1)^{-1} \quad \text{whenever $i \in \mathscr{P}$}.
    \]
    Note that if $i, j \in \mathscr{P}$ and $\widetilde{\gamma} \in \widetilde{W}_\alpha \cap \widetilde{\mathbf{E}}_i(Y)$, then 
    \[
        \spt \phi_j \widetilde{\gamma} \subset (\interior L_{j+1}) \without L_{j-1} \quad \text{and} \quad \sup \im \|\der (\phi_j \widetilde{\gamma})\| \leq \beta(j+1)^{-1};
    \]
    therefore
    \begin{gather*}
        \phi_j \widetilde{\gamma} \in V \cap \widetilde{\mathbf{E}}_{j+1}(Y), \quad \text{if $j \leq i$}, \\
        \phi_j \widetilde{\gamma} = 0, \quad \text{if $j > i$}.
    \end{gather*}
    Since $V$ is convex, $0 \in V$, and $\widetilde{\gamma} = \sum_{j = 1}^i 2^{-j} (\phi_j \widetilde{\gamma}) + 2^{-i} \cdot 0$, we conclude that $\widetilde{\gamma} \in V$, hence that $\widetilde{W}_\alpha \subset V$.
\end{proof}


\begin{remark}
    \label{remark: multiplication continuity}
    Suppose $X$ is a second-countable locally compact Hausdorff space. Whenever $\widetilde{W}_\alpha$ and $V_\alpha$ are as in \ref{lemma: neighborhoods of 0 in widetilde E} and \ref{lemma: neighborhoods of 0 in K(X, Z)}, respectively, the images of $\widetilde{W}_{\alpha} \times \widetilde{W}_{\alpha}$ and $V_\alpha \times V_\alpha$ under multiplication are contained in $\widetilde{W}_\alpha$ and $V_\alpha$, respectively. It follows from \cite[I, \S1, No.\ 6, Proposition 5]{Bourbaki_TVS_MR910295} that the maps $\widetilde{\mathbf{E}}(Y) \times \widetilde{\mathbf{E}}(Y) \to \widetilde{\mathbf{E}}(Y)$ and $\mathscr{K}(X) \times \mathscr{K}(X) \to \mathscr{K}(X)$ induced by multiplication are continuous.
\end{remark}


To prove the embedding theorem, we still need several lemmas.


\begin{lemma}
    \label{lemma: positive convex decreasing function}
    Suppose $\alpha \in \mathscr{P}^\mathscr{P}$. Then, there exists a positive convex decreasing function $h: \mathbf{R} \to \mathbf{R}$ of class $1$ such that
    \[
        h(x) < \alpha(i)^{-1} \quad \text{whenever $x \geq i-1$ and $i \in \mathscr{P}$}.
    \]
\end{lemma}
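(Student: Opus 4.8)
The plan is to exhibit $h$ directly as an infinite sum of rescaled translates of a single convex, decreasing, class-$1$ building block, rather than by mollifying a convex piecewise-linear interpolant (which would force the smoothing scale to shrink with $i$ and obscure the estimates).

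\textbf{Reduction of the data.} First I would replace $\alpha(i)^{-1}$ by $m_k := \inf\{\alpha(1)^{-1}, \dotsc, \alpha(k+1)^{-1}\}$ for integers $k \geq 0$; this is a positive non-increasing sequence with $m_k \leq \alpha(k+1)^{-1}$. It then suffices to construct a positive convex decreasing $h$ of class $1$ with $h(k) < m_k$ for every integer $k \geq 0$: for if $x \geq i - 1$ with $i \in \mathscr{P}$, monotonicity gives $h(x) \leq h(i-1) < m_{i-1} \leq \alpha(i)^{-1}$.

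\textbf{The building block and the series.} Next I would take $g : \mathbf{R} \to \mathbf{R}$ defined by $g(t) = (1-t)^2$ for $t \leq 1$ and $g(t) = 0$ for $t \geq 1$. One checks immediately that $g$ is of class $1$ (not of class $2$, which is harmless here), convex, non-increasing, and non-negative, with $g(t) > 0$ precisely for $t < 1$. Choosing $0 < q < 1$, setting $S = \sum_{j=0}^\infty (j+1)^2 q^j < \infty$, and letting $c_k = (2S)^{-1} m_k q^k > 0$, I would define
\[
    h(x) = \sum_{k=0}^\infty c_k \, g(x-k).
\]
Using $|g(t)| \leq (1+|t|)^2$ and $|g'(t)| \leq 2(1+|t|)$ together with $\sum_k m_k q^k k^2 < \infty$, both this series and its termwise derivative converge uniformly on each bounded subset of $\mathbf{R}$; hence $h$ is of class $1$ with $h' = \sum_k c_k g'(\,\cdot - k)$. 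Being a locally uniform limit of convex partial sums with non-decreasing derivatives, $h$ is convex; and for every $x$ one may pick an integer $k \geq 0$ with $k > x-1$, for which $c_k g'(x-k) < 0$ and $c_k g(x-k) > 0$ while all other summands are $\leq 0$ and $\geq 0$ respectively, so $h' < 0$ everywhere and $h > 0$ everywhere.

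\textbf{The estimate at integers.} For an integer $m \geq 0$ only the summands with $k \geq m$ contribute, and substituting $j = k-m$ gives $h(m) = \sum_{j\geq 0} c_{m+j}(j+1)^2 \leq (2S)^{-1} m_m q^m \sum_{j\geq 0}(j+1)^2 q^j = \tfrac12 m_m q^m < m_m \leq \alpha(m+1)^{-1}$, as required. I expect the only genuinely delicate point to be the choice of building block: $g$ must at once be convex, non-increasing, of class $1$, non-negative, and \emph{eventually vanish}, so that $h(m)$ collapses to a one-sided geometric-type series amenable to a termwise bound; $(1-t)_+^2$ is the simplest such function, and the weights $c_k = (2S)^{-1} m_k q^k$ are precisely what let the construction simultaneously absorb the (possibly very fast) decay of $\alpha(i)^{-1}$ and the quadratic growth of $g$ near $-\infty$.
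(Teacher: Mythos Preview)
Your argument is correct. The paper's proof takes a different route: it first builds a piecewise-linear convex decreasing function $f$ by interpolating values $\delta(i)^{-1}$ at the integers $i-1$, where $\delta(i) = 2^{\beta(i)}$ with $\beta(i) = \sup\{\alpha(j) : 1 \le j \le i\} + i$ is chosen so that the successive slope differences are strictly positive; it then mollifies $f$ by a non-negative kernel $g$ supported in $[-1,0]$ and checks that $h = f * g$ is of class $1$, convex, decreasing, and satisfies $h \le f$ (the one-sided support of $g$ being what forces the last inequality). Your construction instead writes $h$ directly as a weighted series $\sum_k c_k\,g(\cdot - k)$ of translates of the single convex non-increasing $C^1$ block $g(t) = (1-t)_+^2$, with geometric weights $c_k$ absorbing both the decay of $m_k$ and the quadratic growth of $g$ toward $-\infty$. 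This bypasses mollification entirely: convexity, strict monotonicity, positivity, and the $C^1$ regularity are all inherited termwise, and the bound $h(m) < m_m$ collapses to a geometric sum because $g$ vanishes on $[1,\infty)$. The paper's approach is the more routine one (smooth a polygonal profile), while yours is more self-contained and makes the role of each hypothesis on the building block transparent; both are equally short.
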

\begin{proof}
    Define $\beta$ and $\delta$ in $\mathscr{P}^\mathscr{P}$ by
    \begin{gather*}
        \beta(i) = \sup \{ \alpha(j) \with 1 \leq j \leq i \} + i; \\
        \delta(i) = 2^{\beta(i)}
    \end{gather*}
    whenever $i \in \mathscr{P}$. Note that $\delta$ satisfies
    \[
        \alpha(i) < \delta(i) \leq \delta(i+1)/2 \quad \text{and} \quad \delta(i)^{-1} - \delta(i+1)^{-1} > \delta(i+1)^{-1} - \delta(i+2)^{-1}
    \]
    whenever $i \in \mathscr{P}$. Then, the function $f: \mathbf{R} \to \mathbf{R}$ defined by
    \[
        f(x) = 
        \begin{cases}
            \dfrac{1}{\delta(1)} - \left( \dfrac{1}{\delta(1)} - \dfrac{1}{\delta(2)} \right)x & \text{if $x \leq 0$} \\
            \dfrac{1}{\delta(i)} - \left( \dfrac{1}{\delta(i)} - \dfrac{1}{\delta(i+1)} \right)(x - (i-1)) & \text{if $i \in \mathscr{P}$, $i-1 \leq x \leq i$}
        \end{cases}
    \]
    is a convex decreasing function such that
    \[
        f(x) < \alpha(i)^{-1} \quad \text{whenever $x \geq i-1$ and $i \in \mathscr{P}$}.
    \]
    Choose a function $g: \mathbf{R} \to \mathbf{R}$ of class $1$ such that $g \geq 0$, $\textstyle\int g \ud \mathscr{L}^1 = 1$, and $\spt g \subset \mathbf{R} \cap \{ x \with -1 \leq x \leq 0 \}$. 
    
    Let $h$ be the convolution $f*g$, and we will verify that $h$ has the desired properties. Clearly, $h$ is a positive function of class $1$. Since
    \[
        h(x) - f(x) = \textstyle\int_{\mathbf{R} \cap \{ y \with -1 \leq y \leq 0 \}} (f(x - y) - f(x))g(y) \ud \mathscr{L}^1 \, y < 0,
    \]
    it follows that $h \leq f$.
    To show that $h$ is decreasing, we compute
    \[
        h(x+t) - h(x) = \textstyle\int (f((x - y) + t) - f(x - y))g(y) \ud \mathscr{L}^1 \, y < 0
    \]
    whenever $x \in \mathbf{R}$ and $t > 0$.
    For $x, z \in \mathbf{R}$ and $0 \leq \lambda \leq 1$, we compute
    \begin{align*}
        (f*g)(\lambda x + (1 - \lambda) z)
        &= \textstyle\int f(\lambda x + (1 - \lambda) z - y) g(y) \ud \mathscr{L}^1 \, y \\
        &= \textstyle\int f(\lambda (x - y) + (1 - \lambda) (z - y)) g(y) \ud \mathscr{L}^1 \, y \\
        &\leq \lambda \textstyle\int f(x - y) g(y) \ud \mathscr{L}^1 \, y \\
        &\phantom{\leq} + (1 - \lambda) \textstyle\int f(z - y) g(y) \ud \mathscr{L}^1 \, y \\
        &= \lambda (f*g)(x) + (1 - \lambda) (f*g)(z)
    \end{align*}
    and it follows that $h$ is convex.
\end{proof}


\begin{lemma}
    \label{lemma: topology of uniform convergence and locally convex final topology}
    Suppose $X$ is a locally compact Hausdorff space, $K$ is a compact subset of $X$, and either $Y = \mathbf{R}$, $F_i = \mathscr{K}_{\mathbf{B}(0, i)}(\mathbf{R})$, and $F = \mathscr{K}(\mathbf{R})$, or $F_i = \widetilde{\mathbf{E}}_i(Y)$ and $F = \widetilde{\mathbf{E}}(Y)$. Then, the locally convex final topology $\mathcal{T}_1$ on $\mathscr{K}_K(X, F)$ induced by the inclusion maps $\mathscr{K}_K(X, F_i) \to \mathscr{K}_K(X, F)$ for $i \in \mathscr{P}$ is identical to the topology $\mathcal{T}_2$ of uniform convergence on $\mathscr{K}_K(X, F)$.
\end{lemma}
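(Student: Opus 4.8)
The plan is to prove the two inclusions $\mathcal{T}_2 \subseteq \mathcal{T}_1$ and $\mathcal{T}_1 \subseteq \mathcal{T}_2$ separately; the second is the substantial one, and I would model its proof on those of \ref{lemma: neighborhoods of 0 in K(X, Z)} and \ref{lemma: neighborhoods of 0 in widetilde E}.

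As a preliminary I would record that in both cases $F = \bigcup_{i \in \mathscr{P}} F_i$ is the strict inductive limit of the Banach spaces $F_i$ — this is \ref{remark: properties of widetilde E} for $\widetilde{\mathbf{E}}(Y)$ and \ref{lemma: properties K(X, Z)}\eqref{lemma: strict inductive limit K(X, Z)} for $\mathscr{K}(\mathbf{R})$ — with the inclusions $F_i \to F_j$ for $i \le j$ norm-preserving embeddings. In particular, by \ref{lemma: properties strict inductive limit}\eqref{lemma: properties strict inductive limit2}, whenever $f \in \mathscr{K}_K(X, F)$ the compact set $f[K] \subseteq F$ is contained in some $F_{i_0}$, so $f \in \mathscr{K}_K(X, F_{i_0})$; hence $\mathscr{K}_K(X, F) = \bigcup_{i \in \mathscr{P}} \mathscr{K}_K(X, F_i)$, and, using \ref{lemma: properties K(X, Z)}\eqref{lemma: norm topology and uniform convergence}, $\mathscr{K}_K(X, F_i) \to \mathscr{K}_K(X, F_j)$ is again a norm-preserving embedding, so that $\mathcal{T}_1$ is a strict inductive limit topology in the sense of \ref{remark: criterion strict inductive limit}. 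The inclusion $\mathcal{T}_2 \subseteq \mathcal{T}_1$ is then immediate: a continuous seminorm $p$ on $F$ is bounded, say by $C_i$, on the unit ball of $F_i$, so $\sup_{x \in K} p(f(x)) \le C_i \sup_{x \in K} \|f(x)\|_{F_i}$ shows the inclusion $\mathscr{K}_K(X, F_i) \to \mathscr{K}_K(X, F)$ continuous for the target topology $\mathcal{T}_2$, whence the identity $(\mathscr{K}_K(X, F), \mathcal{T}_1) \to (\mathscr{K}_K(X, F), \mathcal{T}_2)$ is continuous by \ref{remark: universal property locally convex final topology}.

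For $\mathcal{T}_1 \subseteq \mathcal{T}_2$ I would invoke the description of $\mathcal{T}_1$-neighborhoods in \ref{definition: locally convex final topology}: it suffices, given a convex symmetric absorbent $V \subseteq \mathscr{K}_K(X, F)$ such that $V \cap \mathscr{K}_K(X, F_i)$ is a neighborhood of $0$ in $\mathscr{K}_K(X, F_i)$ for every $i$, to exhibit a $\mathcal{T}_2$-neighborhood of $0$ inside $V$. Choose $\beta \in \mathscr{P}^{\mathscr{P}}$ with $\{g \in \mathscr{K}_K(X, F_i) \with \sup_{x \in K} \|g(x)\|_{F_i} \le \beta(i)^{-1}\} \subseteq V$ for each $i$, and let $\phi_j$, $j \in \mathscr{P}$, be the partition of unity on $Y$ (resp. on $\mathbf{R}$) from the proof of \ref{lemma: neighborhoods of 0 in widetilde E} (resp. \ref{lemma: neighborhoods of 0 in K(X, Z)}): each $\spt \phi_j$ sits between consecutive balls, $\sum_{j=1}^\infty 2^{-j}\phi_j = 1$, and — by \ref{remark: multiplication continuity} together with the support filtration — multiplication by $\phi_j$ maps $F$ continuously into $F_{j+1}$ with $\|\phi_j g\|_{F_{j+1}}$ controlled by $\sup \im(|\phi_j| + \|\der \phi_j\|)$ (resp. $\sup \im |\phi_j|$) times the relevant seminorm of $g$ localized to $Y \without L_{j-1}$, so by \ref{lemma: properties K(X, Z)}\eqref{lemma: K(X, Z) pushforward} post-composition gives a continuous map $f \mapsto \phi_j f$ of $\mathscr{K}_K(X, F)$ into $\mathscr{K}_K(X, F_{j+1})$. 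Now pick $\alpha \in \mathscr{P}^{\mathscr{P}}$ with $\alpha(j)^{-1} \sup \im(|\phi_j| + \|\der \phi_j\|) \le \beta(j+1)^{-1}$ (resp. $\alpha(j)^{-1} \sup \im |\phi_j| \le \beta(j+1)^{-1}$) for all $j$, and let $N_\alpha$ be the set of $f \in \mathscr{K}_K(X, F)$ with $f(x) \in \widetilde{W}_\alpha$ (resp. $V_\alpha$) for every $x \in K$; since $\{\widetilde{W}_\alpha\}$ (resp. $\{V_\alpha\}$) is a fundamental system of neighborhoods of $0$ in $F$ by \ref{lemma: neighborhoods of 0 in widetilde E} (resp. \ref{lemma: neighborhoods of 0 in K(X, Z)}), the set $N_\alpha$ is a $\mathcal{T}_2$-neighborhood of $0$. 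For $f \in N_\alpha$ one has $f \in \mathscr{K}_K(X, F_{i_0})$ for some $i_0$, so $\phi_j f = 0$ whenever $j \ge i_0 + 1$, while for $j \le i_0$ the defining inequalities of $\widetilde{W}_\alpha$ (resp. $V_\alpha$) on $\spt \phi_j \subseteq Y \without L_{j-1}$ yield $\sup_{x \in K} \|\phi_j f(x)\|_{F_{j+1}} \le \beta(j+1)^{-1}$, hence $\phi_j f \in V$; as $f = \sum_{j=1}^{i_0} 2^{-j}(\phi_j f) + 2^{-i_0} \cdot 0$ and $V$ is convex with $0 \in V$, we get $f \in V$, i.e.\ $N_\alpha \subseteq V$.

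The main obstacle, to my mind, is the bookkeeping in this last step rather than any single conceptual point: one must (i) know that every $f \in \mathscr{K}_K(X, F)$ already lives in some $\mathscr{K}_K(X, F_{i_0})$, so that the partition-of-unity decomposition of a fixed $f$ is a finite convex combination — which rests squarely on \ref{lemma: properties strict inductive limit}\eqref{lemma: properties strict inductive limit2}; and (ii) check that the pointwise-membership sets $N_\alpha$ really form a fundamental system for the uniform-convergence topology $\mathcal{T}_2$ and that multiplication by $\phi_j$ satisfies the stated localized norm estimate, after which the choice of $\alpha$ in terms of $\beta$ is exactly as in \ref{lemma: neighborhoods of 0 in widetilde E}. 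The case $Y = \mathbf{R}$, $F_i = \mathscr{K}_{\mathbf{B}(0,i)}(\mathbf{R})$, $F = \mathscr{K}(\mathbf{R})$ is handled by the same scheme with $\|\der(\,\cdot\,)\|$ replaced by $|\cdot|$ throughout and $V_\alpha$ in place of $\widetilde{W}_\alpha$.
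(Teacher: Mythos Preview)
Your proposal is correct and follows essentially the same route as the paper's proof: both directions are handled identically, with $\mathcal{T}_2 \subseteq \mathcal{T}_1$ via the universal property \ref{remark: universal property locally convex final topology}, and $\mathcal{T}_1 \subseteq \mathcal{T}_2$ via the partition-of-unity decomposition $\eta = \sum_{j=1}^{i_0} 2^{-j}(\phi_j \eta) + 2^{-i_0}\cdot 0$ combined with the fundamental systems $\widetilde{W}_\alpha$ (resp.\ $V_\alpha$) from \ref{lemma: neighborhoods of 0 in widetilde E} (resp.\ \ref{lemma: neighborhoods of 0 in K(X, Z)}) and the finiteness coming from \ref{lemma: properties strict inductive limit}\eqref{lemma: properties strict inductive limit2}. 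The only cosmetic difference is that the paper treats both cases with the single bound $\alpha(i)^{-1}\sup\im(|\phi_i|+\|\der\phi_i\|) \le \beta(i+1)^{-1}$ rather than splitting into two estimates as you do.
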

\begin{proof}
    Since the inclusion map
    \[
        \mathscr{K}_K(X, F_i) \to (\mathscr{K}_K(X, F), \mathcal{T}_2)
    \]
    is continuous whenever $i \in \mathscr{P}$ by \ref{lemma: properties K(X, Z)}\eqref{lemma: K(X, Z) pushforward}, it follows that $\mathcal{T}_2 \subset \mathcal{T}_1$. 
    
    Conversely, let $V$ be a convex neighborhood of $0$ in $(\mathscr{K}_K(X, F), \mathcal{T}_1)$. Then, there exists $\beta \in \mathscr{P}^\mathscr{P}$ such that
    \[
        \mathbf{B}(0, \beta(i)^{-1}) \cap \mathscr{K}_K(X, F_i) \subset V \cap \mathscr{K}_K(X, F_i) \quad \text{whenever $i \in \mathscr{P}$}.
    \]
    Let $L_0 = \varnothing$ and $L_{i} = \mathbf{B}(0, i) \cap Y$ for $i \in \mathscr{P}$. Choose non-negative functions $\phi_i: Y \to \mathbf{R}$ of class $1$ such that
    \[
        \spt \phi_i \subset (\interior L_{i+1}) \without L_{i-1} \quad \text{and} \quad \sum_{i=1}^\infty 2^{-i} \phi(y) = 1
    \]
    whenever $i \in \mathscr{P}$ and $y \in Y$, and choose $\alpha \in \mathscr{P}^\mathscr{P}$ such that
    \[
        \alpha(i)^{-1} \sup\im ( |\phi_i| + \|\der \phi_i\| ) \leq \beta(i+1)^{-1} \quad \text{whenever $i \in \mathscr{P}$}.
    \]
    Let $\eta \in \mathscr{K}_K(X, F)$ such that  $\im \eta \subset U_\alpha$, where $U_\alpha$ equals either $V_\alpha$ or $\widetilde{W}_\alpha$ as in \ref{lemma: neighborhoods of 0 in K(X, Z)} with $X$ and $Z$ both replaced by $\mathbf{R}$ and \ref{lemma: neighborhoods of 0 in widetilde E}, respectively. By \ref{lemma: properties K(X, Z)}\eqref{lemma: strict inductive limit K(X, Z)}, \ref{remark: properties of widetilde E}, and \ref{lemma: properties strict inductive limit}\eqref{lemma: properties strict inductive limit2}, there exists $i \in \mathscr{P}$ such that $\im \eta \subset F_i$. Since multiplication by $\phi_j$ induces a continuous linear map from $F$ into $F_{j+1}$ by \ref{remark: multiplication continuity}, \ref{lemma: properties K(X, Z)}\eqref{lemma: strict inductive limit K(X, Z)}, \ref{remark: properties of widetilde E}, and \ref{lemma: properties strict inductive limit}\eqref{lemma: properties strict inductive limit1} the function $\eta_j$ defined by $\eta_j(x) = \phi_j \eta(x)$ for $x \in X$ belongs to $\mathscr{K}_K(X, F_{j+1})$ by \ref{lemma: properties K(X, Z)}\eqref{lemma: K(X, Z) pushforward}. Observe that
    \[
        \sup \im |\eta_j| \leq \beta(j+1)^{-1}
    \]
    whenever $j \in \mathscr{P}$. Note that $\eta_j = 0$ for $j > i$. Therefore, we have
    \[
        \eta = \sum_{j=1}^i 2^{-j} \eta_j + 2^{-1} \cdot 0 \in V
    \]
    since $V$ is convex and $0 \in V$. It follows that $\mathcal{T}_1 \subset \mathcal{T}_2$.
\end{proof}


\begin{lemma}
    \label{lemma: double inductive limit K(X, E)}
    Suppose $X$ is a locally compact Hausdorff space, $K_i$ is a sequence of compact subsets of $X$ such that $X = \bigcup_{i=1}^\infty K_i$ and $K_i \subset \interior K_{i+1}$ whenever $i \in \mathscr{P}$, and either $Y = \mathbf{R}$, $F_i = \mathscr{K}_{\mathbf{B}(0, i)}(\mathbf{R})$, and $F = \mathscr{K}(\mathbf{R})$, or $F_i = \widetilde{\mathbf{E}}_i(Y)$ and $F = \widetilde{\mathbf{E}}(Y)$.

    Then, the following three statements hold.
    \begin{enumerate}
        \item 
            \label{lemma: double inductive limit K(X, E) 1}
            Whenever $K$ is a compact subset of $X$ and $j \in \mathscr{P}$, we have 
            \begin{gather*}
                \mathscr{K}_K(X, F) = \bigcup_{i=1}^\infty \mathscr{K}_K(X, F) \cap \{ f \with \im f \subset F_i \}, \\
                \mathscr{K}_K(X, F) \cap \{ f \with \im f \subset F_j \} = \mathscr{K}_K(X, F_j).
            \end{gather*}
            Consequently, $\mathscr{K}_K(X, F)$ is the strict inductive limit of $\mathscr{K}_K(X, F_i)$ as $i \to \infty$.
        \item 
            \label{lemma: double inductive limit K(X, E) 2}
            $\mathscr{K}(X, F)$ is the strict inductive limit of $\mathscr{K}_{K_i}(X, F_i)$ as $i \to \infty$. Moreover, this statement remains true with $F$ and $F_i$ replaced by $\mathbf{E}(Y)$ and $\mathbf{E}_i(Y)$, respectively.
        \item 
            \label{lemma: double inductive limit K(X, E) 3}
            The map $\iota: \mathscr{K}(X, \mathbf{E}(Y)) \to \mathscr{K}(X \times Y, \homomorphism(Y, \mathbf{R}))$ defined by
            \[
                \iota(\eta)(x, y) = (\der \eta(x))(y) \quad \text{for $\eta \in \mathscr{K}(X, \mathbf{E}(Y))$ and $(x, y) \in X \times Y$}
            \]
            is a continuous monomorphism; moreover, its restriction gives a norm-preserving embedding
            \[
                \mathscr{K}_K(X, \mathbf{E}_s(Y)) \to \mathscr{K}_{K \times \mathbf{B}(0, s)}(X \times Y, \homomorphism(Y, \mathbf{R}))
            \]
            whenever $K$ is a compact subset of $X$ and $0 \leq s < \infty$.
    \end{enumerate}
\end{lemma}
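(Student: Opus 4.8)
The plan is to prove \eqref{lemma: double inductive limit K(X, E) 1}, \eqref{lemma: double inductive limit K(X, E) 2}, and \eqref{lemma: double inductive limit K(X, E) 3} in turn, as each relies on the preceding one. For \eqref{lemma: double inductive limit K(X, E) 1}, fix a compact $K \subset X$ and $j \in \mathscr{P}$. For the first equality, if $f \in \mathscr{K}_K(X, F)$, then $f[K]$ is a compact subset of $F$, and since $F$ is the strict inductive limit of the Banach spaces $F_i$ (by \ref{lemma: properties K(X, Z)}\eqref{lemma: strict inductive limit K(X, Z)}, respectively \ref{remark: properties of widetilde E}), \ref{lemma: properties strict inductive limit}\eqref{lemma: properties strict inductive limit2} places $f[K]$ in some $F_i$, whence $\im f \subset F_i$ as well because $f$ vanishes off $K$. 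For the second equality I would use that the given topology of $F_j$ is the subspace topology induced from $F$ by \ref{lemma: properties strict inductive limit}\eqref{lemma: properties strict inductive limit1}, so that a function into $F$ with image in $F_j$ is continuous precisely as a function into $F_j$, together with the fact that the inclusion $F_j \to F$ is norm-preserving, so that the supremum norm of $\mathscr{K}_K(X, F_j)$ coincides with the one it inherits from $\mathscr{K}_K(X, F)$. The final assertion of \eqref{lemma: double inductive limit K(X, E) 1} then follows from \ref{remark: criterion strict inductive limit}, once we note that the inclusions $\mathscr{K}_K(X, F_i) \to \mathscr{K}_K(X, F_{i'})$ for $i \leq i'$ are norm-preserving embeddings, that their union is $\mathscr{K}_K(X, F)$ by the two equalities just shown, and that $\mathscr{K}_K(X, F)$ carries the locally convex final topology induced by these inclusions by \ref{lemma: topology of uniform convergence and locally convex final topology}.

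For the main part of \eqref{lemma: double inductive limit K(X, E) 2}, the equality $\mathscr{K}(X, F) = \bigcup_{i=1}^\infty \mathscr{K}_{K_i}(X, F_i)$ follows by combining \eqref{lemma: double inductive limit K(X, E) 1} with the observation that the (compact) support of any $f \in \mathscr{K}(X, F)$ lies in some $K_m$ by the exhaustion hypothesis, so that $f \in \mathscr{K}_{K_i}(X, F_i)$ with $i = \max\{m, n\}$ where $\im f \subset F_n$. Each diagonal inclusion $\mathscr{K}_{K_i}(X, F_i) \to \mathscr{K}_{K_{i+1}}(X, F_{i+1})$ is a homeomorphic embedding, being the composition of the norm-preserving inclusions $\mathscr{K}_{K_i}(X, F_i) \to \mathscr{K}_{K_{i+1}}(X, F_i) \to \mathscr{K}_{K_{i+1}}(X, F_{i+1})$. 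The step I expect to be the main obstacle is to identify the topology of $\mathscr{K}(X, F)$ as the locally convex final topology for the diagonal inclusions. I would first note, using \ref{lemma: properties K(X, Z)}\eqref{lemma: strict inductive limit K(X, Z)} and \ref{remark: criterion strict inductive limit}, that $\mathscr{K}(X, F)$ carries the locally convex final topology for the inclusions $\mathscr{K}_{K_i}(X, F) \to \mathscr{K}(X, F)$, and, using \eqref{lemma: double inductive limit K(X, E) 1}, that each $\mathscr{K}_{K_i}(X, F)$ carries the locally convex final topology for the inclusions $\mathscr{K}_{K_i}(X, F_j) \to \mathscr{K}_{K_i}(X, F)$; then \ref{lemma: double locally convex final topology}, applied with the partition $\{ \{i\} \times \mathscr{P} \with i \in \mathscr{P} \}$ of $\mathscr{P} \times \mathscr{P}$, gives that $\mathscr{K}(X, F)$ carries the locally convex final topology for the doubly-indexed family of inclusions $\mathscr{K}_{K_i}(X, F_j) \to \mathscr{K}(X, F)$. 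Finally I would collapse this family to the diagonal: each $\mathscr{K}_{K_i}(X, F_j) \to \mathscr{K}(X, F)$ factors continuously through $\mathscr{K}_{K_k}(X, F_k)$ with $k = \max\{i, j\}$, and the diagonal family is itself a subfamily, so by \ref{remark: universal property locally convex final topology} the two locally convex final topologies coincide; \ref{remark: criterion strict inductive limit} then yields the claim.

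For the postscript of \eqref{lemma: double inductive limit K(X, E) 2} I would reduce to the case just proved. When $\dimension Y \geq 2$, the set $Y \without \mathbf{B}(0, s)$ is connected, so each $\gamma \in \mathbf{E}_s(Y)$ takes a constant value $c_\gamma$ there, whence $\gamma - c_\gamma \in \widetilde{\mathbf{E}}_s(Y)$; thus the norm-preserving embedding $\widetilde{\mathbf{E}}_s(Y) \to \mathbf{E}_s(Y)$ of \ref{remark: relation E and widetilde E} is surjective, and since these isomorphisms are compatible with the inclusions, so are the induced isomorphisms $\mathscr{K}_{K_i}(X, \widetilde{\mathbf{E}}_i(Y)) \to \mathscr{K}_{K_i}(X, \mathbf{E}_i(Y))$ and $\mathscr{K}(X, \widetilde{\mathbf{E}}(Y)) \to \mathscr{K}(X, \mathbf{E}(Y))$ (see \ref{lemma: properties K(X, Z)}\eqref{lemma: K(X, Z) pushforward}); hence the assertion for $F = \widetilde{\mathbf{E}}(Y)$ transfers to $\mathbf{E}(Y)$. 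When $\dimension Y = 1$, I identify $Y$ with $\mathbf{R}$ and use, in the same way, the norm-preserving isomorphisms $\mathbf{E}_s(\mathbf{R}) \simeq \mathscr{K}_{\mathbf{B}(0, s)}(\mathbf{R}, \homomorphism(\mathbf{R}, \mathbf{R})) \simeq \mathscr{K}_{\mathbf{B}(0, s)}(\mathbf{R})$ coming from \ref{lemma: properties of E}\eqref{lemma: properties of E 1} and \ref{remark: sujectivity continuous monomorphism E}.

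For \eqref{lemma: double inductive limit K(X, E) 3}, I would construct the norm-preserving embedding at the $(K, s)$-level by composing two norm-preserving maps: post-composition with the norm-preserving embedding $\der \colon \mathbf{E}_s(Y) \to \mathscr{K}_{\mathbf{B}(0, s)}(Y, \homomorphism(Y, \mathbf{R}))$ of \ref{lemma: properties of E}\eqref{lemma: properties of E 1} (lifted to the $\mathscr{K}_K$-spaces by \ref{lemma: properties K(X, Z)}\eqref{lemma: K(X, Z) pushforward}), followed by the inverse of the norm-preserving isomorphism of \ref{lemma: exponential law} taken with $Z = \homomorphism(Y, \mathbf{R})$ and $L = \mathbf{B}(0, s)$; one then checks that this composite sends $\eta$ to $(x, y) \mapsto (\der \eta(x))(y)$, i.e.\ is the asserted restriction of $\iota$. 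The map $\iota$ is linear and injective, since $\iota(\eta) = 0$ forces $\der \eta(x) = 0$, hence $\eta(x)$ constant, hence $\eta(x) = 0$ because $\eta(x)(0) = 0$. Continuity of $\iota$ then follows from the postscript of \eqref{lemma: double inductive limit K(X, E) 2} and the universal property \ref{remark: universal property locally convex final topology}: it suffices that $\iota | \mathscr{K}_{K_i}(X, \mathbf{E}_i(Y))$ be continuous, and this restriction is the composition of the norm-preserving embedding just constructed with $K = K_i$, $s = i$ (note $K_i \times \mathbf{B}(0, i)$ is compact in $X \times Y$ since $Y$ is finite-dimensional) and the continuous inclusion $\mathscr{K}_{K_i \times \mathbf{B}(0, i)}(X \times Y, \homomorphism(Y, \mathbf{R})) \to \mathscr{K}(X \times Y, \homomorphism(Y, \mathbf{R}))$.
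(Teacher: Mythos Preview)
Your proposal is correct and follows essentially the same route as the paper: part \eqref{lemma: double inductive limit K(X, E) 1} via \ref{lemma: properties strict inductive limit} and \ref{lemma: topology of uniform convergence and locally convex final topology}; part \eqref{lemma: double inductive limit K(X, E) 2} via \ref{lemma: double locally convex final topology} to pass to the doubly-indexed family and then collapsing to the diagonal through $\mathscr{K}_{K_k}(X, F_k)$ with $k = \sup\{i,j\}$, with the postscript handled by the isomorphisms of \ref{remark: relation E and widetilde E} (for $\dim Y \geq 2$) and \ref{remark: sujectivity continuous monomorphism E} (for $\dim Y = 1$); and part \eqref{lemma: double inductive limit K(X, E) 3} via \ref{lemma: properties of E}\eqref{lemma: properties of E 1}, \ref{lemma: exponential law}, and the universal property. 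One cosmetic remark: in your argument for the second equality in \eqref{lemma: double inductive limit K(X, E) 1} you speak of the inclusion $F_j \to F$ being ``norm-preserving'', but $F$ is not normed; what you need (and what you in fact use) is only that $F_j$ carries the subspace topology from $F$, which is exactly \ref{lemma: properties strict inductive limit}\eqref{lemma: properties strict inductive limit1}.
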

\begin{proof}
    The main assertion of \eqref{lemma: double inductive limit K(X, E) 1} follows from \ref{lemma: properties K(X, Z)}\eqref{lemma: strict inductive limit K(X, Z)}, \ref{remark: properties of widetilde E}, and \ref{lemma: properties strict inductive limit}\eqref{lemma: properties strict inductive limit1}\eqref{lemma: properties strict inductive limit2}. Therefore, the postscript of \eqref{lemma: double inductive limit K(X, E) 1} follows from \ref{remark: criterion strict inductive limit} and \ref{lemma: topology of uniform convergence and locally convex final topology}. 
    
    Next, we will prove \eqref{lemma: double inductive limit K(X, E) 2}. By the postscript of \eqref{lemma: double inductive limit K(X, E) 1} and \ref{lemma: double locally convex final topology}, we see the locally convex final topology on $\mathscr{K}(X, F)$ induced by the inclusion maps 
    \[
        \mathscr{K}_{K_i}(X, F) \to \mathscr{K}(X, F) \quad  \text{for $i \in \mathscr{P}$}
    \]
    is identical to the locally convex final topology induced by the inclusion maps 
    \[
        \mathscr{K}_{K_i}(X, F_j) \to \mathscr{K}(X, F) \quad \text{for $i, j \in \mathscr{P}$.}
    \]
    On the other hand, since the inclusion map $\mathscr{K}_{K_i}(X, F_j) \to \mathscr{K}_{K_k}(X, F_k)$ is a homeomorphic embedding whenever $i, j \in \mathscr{P}$ and $k = \sup\{ i, j \}$, the main assertion of \eqref{lemma: double inductive limit K(X, E) 2} follows from \ref{remark: criterion strict inductive limit}. To prove the postscript of \eqref{lemma: double inductive limit K(X, E) 2}, it is enough to note that for $\dimension Y = 1$ we have $\mathscr{K}_{\mathbf{B}(0, i)}(\mathbf{R}) \simeq \mathbf{E}_i(\mathbf{R})$ and $\mathscr{K}(\mathbf{R}) \simeq \mathbf{E}(\mathbf{R})$ by \ref{remark: sujectivity continuous monomorphism E}, and for $\dimension Y \geq 2$, we have $\widetilde{\mathbf{E}}_s(Y) \simeq \mathbf{E}_s(Y)$ and $\widetilde{\mathbf{E}}(Y) \simeq \mathbf{E}(Y)$ by \ref{remark: relation E and widetilde E}.

    Finally, we aim to prove \eqref{lemma: double inductive limit K(X, E) 3}. Whenever $K$ is a compact subset of $X$ and $0 \leq s < \infty$, by \ref{lemma: properties of E}\eqref{lemma: properties of E 1} and \ref{lemma: properties K(X, Z)}\eqref{lemma: K(X, Z) pushforward}, post-composition with the derivative induces a norm-preserving embedding
    \[
        \mathscr{K}_K(X, \mathbf{E}_s(Y)) \to \mathscr{K}_K(X, \mathscr{K}_{\mathbf{B}(0, s)}(Y, \homomorphism(Y, \mathbf{R}))),
    \]
    and it follows from \ref{lemma: exponential law} that so is
    \[
        \mathscr{K}_K(X, \mathbf{E}_s(Y)) \to \mathscr{K}_{K \times \mathbf{B}(0, s)}(X \times Y, \homomorphism(Y, \mathbf{R})).
    \]
    These embeddings then induce a continuous linear map that agrees with $\iota$ by \eqref{lemma: double inductive limit K(X, E) 2} and \ref{remark: universal property locally convex final topology}. The postscript follows from \ref{lemma: properties K(X, Z)}\eqref{lemma: strict inductive limit K(X, Z)} and\ref{lemma: properties strict inductive limit}\eqref{lemma: properties strict inductive limit1}.
\end{proof}


\begin{remark}
    \label{remark: relation iota and widetilde iota}
    If $\dimension Y = 1$, then the embeddings in \ref{lemma: double inductive limit K(X, E)}\eqref{lemma: double inductive limit K(X, E) 3} are norm-preserving isomorphisms and hence $\iota$ is an isomorphism of locally convex spaces. If $\dimension Y \geq 2$, then the map $I: \mathscr{K}(X, \widetilde{\mathbf{E}}(Y)) \to \mathscr{K}(X, \mathbf{E}(Y))$ as in \ref{remark: relation E and widetilde E} is an isomorphism; it follows that $\iota$ is a homeomorphic embedding if and only if so is the composition $\widetilde \iota = \iota \circ I$.
\end{remark}

The following theorem shows that the map $\widetilde{\iota}$ is a homeomorphic embedding (even for the case $\dimension Y = 1$); therefore, so is $\iota$.


\begin{theorem}
    \label{theorem: general homeomorphic embedding widetilde E}
    Suppose $X$ is a locally compact Hausdorff space, $K_0 = \varnothing$, $K_i$ is a sequence of compact subsets of $X$ such that $K_i \subset \interior K_{i+1}$ whenever $i \in \mathscr{P}$ and $\bigcup_{i=1}^\infty K_i = X$, let $L_0 = \varnothing$ and $L_i = \mathbf{B}(0, i) \cap Y$ whenever $i \in \mathscr{P}$, and let $C_i = K_i \times L_i$ whenever $i \in \mathscr{P} \cup \{ 0 \}$.
    
    Then, the subsets $\widetilde{W}_\alpha$ consisting of $\eta \in \mathscr{K}(X, \widetilde{\mathbf{E}}(Y))$ satisfying
    \[
        |\eta(x)(y)| + \| \der(\eta(x))(y) \| \leq \alpha(i)^{-1}
    \]
    whenever $i \in \mathscr{P}$ and $(x, y) \in (X \times Y) \without C_{i-1}$, corresponding to each $\alpha \in \mathscr{P}^\mathscr{P}$ form a fundamental system of neighborhoods of $0$ of $\mathscr{K}(X, \widetilde{\mathbf{E}}(Y))$. Furthermore, the map $\widetilde{\iota}: \mathscr{K}(X, \widetilde{\mathbf{E}}(Y)) \to \mathscr{K}(X \times Y, \homomorphism(Y, \mathbf{R}))$ defined by
    \[
        \widetilde{\iota}(\eta)(x, y) = \der (\eta(x))(y) \quad \text{for $(x, y) \in X \times Y$}
    \]
    is a homeomorphic embedding.
\end{theorem}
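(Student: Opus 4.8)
The plan is to prove the two assertions in turn, the first modelled on the proofs of \ref{lemma: neighborhoods of 0 in K(X, Z)} and \ref{lemma: neighborhoods of 0 in widetilde E} and resting on the description of $\mathscr{K}(X, \widetilde{\mathbf{E}}(Y))$ as the strict inductive limit of the Banach spaces $\mathscr{K}_{K_i}(X, \widetilde{\mathbf{E}}_i(Y))$ furnished by \ref{lemma: double inductive limit K(X, E)}\eqref{lemma: double inductive limit K(X, E) 2}. Each $\widetilde{W}_\alpha$ is plainly convex, symmetric, and absorbent (for the last point one uses that every $\eta$ lies in some $\mathscr{K}_{K_N}(X, \widetilde{\mathbf{E}}_N(Y))$ by \ref{lemma: properties strict inductive limit}\eqref{lemma: properties strict inductive limit2}, so that $\sup \im ( |\eta(x)(y)| + \|\der(\eta(x))(y)\| ) < \infty$), hence it is a neighbourhood of $0$ as soon as its trace on each $\mathscr{K}_{K_i}(X, \widetilde{\mathbf{E}}_i(Y))$ contains a ball. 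For $\eta$ in that Banach space with $\sup \im \|\der(\eta(x))(y)\| \leq \varepsilon$ and any $(x, y) \notin C_{m-1}$: if $m > i$ then $(x, y) \notin C_{m-1} \supset C_i$, so $\eta(x)(y) = 0$ and $\der(\eta(x))(y) = 0$ since $\eta$ is supported in $K_i \times L_i$; if $m \leq i$ then $\|\der(\eta(x))(y)\| \leq \varepsilon$ and, integrating along the ray leaving $\mathbf{B}(0, i) \supset \spt(\eta(x))$ as in \ref{lemma: neighborhoods of 0 in widetilde E}, $|\eta(x)(y)| \leq i \varepsilon$. Thus $\varepsilon = (i+1)^{-1} \inf \{ \alpha(1)^{-1}, \dotsc, \alpha(i)^{-1} \}$ works.

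To see that every convex neighbourhood $V$ of $0$ contains some $\widetilde{W}_\alpha$, I would run the two partition-of-unity arguments of the source lemmas simultaneously. Fix $\beta \in \mathscr{P}^\mathscr{P}$ with $\mathbf{B}(0, \beta(k)^{-1}) \cap \mathscr{K}_{K_k}(X, \widetilde{\mathbf{E}}_k(Y)) \subset V$ for all $k$, pick non-negative $\psi_i \in \mathscr{K}(X)$ with $\spt \psi_i \subset (\interior K_{i+1}) \without K_{i-1}$ and $\sum_i 2^{-i} \psi_i = 1$, and non-negative $\phi_j: Y \to \mathbf{R}$ of class $1$ with $\spt \phi_j \subset (\interior L_{j+1}) \without L_{j-1}$ and $\sum_j 2^{-j} \phi_j = 1$, and for $\eta \in \widetilde{W}_\alpha$ set $\eta_{i,j}(x) = \psi_i(x)\, \phi_j\, \eta(x)$. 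By \ref{remark: multiplication continuity} and \ref{lemma: properties K(X, Z)}\eqref{lemma: K(X, Z) pushforward}, $\eta_{i,j} \in \mathscr{K}_{K_{i+1}}(X, \widetilde{\mathbf{E}}_{j+1}(Y))$; it vanishes unless $x \notin K_{i-1}$ and $y \notin L_{j-1}$, hence unless $(x, y) \notin C_{\max\{i,j\}-1}$, where $|\eta(x)(y)| + \|\der(\eta(x))(y)\| \leq \alpha(\max\{i,j\})^{-1}$; and $\eta = \sum_{i,j} 2^{-i-j} \eta_{i,j}$ is a finite sum. By the product rule, $\sup \im \|\der \eta_{i,j}\| \leq \sup \im |\psi_i| \cdot \sup \im ( |\phi_j| + \|\der \phi_j\| ) \cdot \alpha(\max\{i,j\})^{-1}$, so choosing $\alpha(k)$ large enough that this is $\leq \beta(k+1)^{-1}$ whenever $\max\{i,j\} = k$ puts every $\eta_{i,j}$ in $V$, hence $\eta \in V$ by convexity and $\widetilde{W}_\alpha \subset V$.

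For the second assertion, recall from \ref{remark: relation iota and widetilde iota} that $\widetilde{\iota} = \iota \circ I$ with $I$ as in \ref{remark: relation E and widetilde E}, so that, by \ref{lemma: double inductive limit K(X, E)}\eqref{lemma: double inductive limit K(X, E) 3}, $\widetilde{\iota}$ is a continuous monomorphism whose restriction to each $\mathscr{K}_K(X, \widetilde{\mathbf{E}}_s(Y))$ is norm-preserving into $\mathscr{K}_{K \times \mathbf{B}(0, s)}(X \times Y, \homomorphism(Y, \mathbf{R}))$; it remains to prove openness onto the image. Applying \ref{lemma: neighborhoods of 0 in K(X, Z)} to $X \times Y$ with the compact exhaustion $C_i = K_i \times L_i$, the sets $V_\gamma$ of $f$ with $|f(x, y)| \leq \gamma(i)^{-1}$ for all $i$ and all $(x, y) \notin C_{i-1}$ are a fundamental system of neighbourhoods of $0$ in $\mathscr{K}(X \times Y, \homomorphism(Y, \mathbf{R}))$, so by the first assertion it suffices to find, for each $\alpha$, some $\gamma$ with $\widetilde{\iota}^{-1}[V_\gamma] \subset \widetilde{W}_\alpha$. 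If $\widetilde{\iota}(\eta) \in V_\gamma$ and $(x, y) \notin C_{i-1}$ then $\|\der(\eta(x))(y)\| = \|\widetilde{\iota}(\eta)(x, y)\| \leq \gamma(i)^{-1}$; and integrating $\|\der(\eta(x))\|$ along the ray $t \mapsto y + t v$, $v = y/|y|$ (any unit $v$ if $y = 0$), on which $|y + tv| = |y| + t$ so that $(x, y + tv) \notin C_{m-1}$ for every $m \leq \max\{ i, \lceil |y| + t \rceil \}$, and using that $\eta(x)$ has compact support, yields $|\eta(x)(y)| \leq i \gamma(i)^{-1} + \sum_{m \geq i} \gamma(m)^{-1}$, finite and independent of $\eta$. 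Taking $\gamma(k) = \lceil 2^{k+2} k \sup \{ \alpha(1), \dotsc, \alpha(k) \} \rceil$ makes the right side $\leq \tfrac{1}{2} \alpha(i)^{-1}$, whence $|\eta(x)(y)| + \|\der(\eta(x))(y)\| \leq \alpha(i)^{-1}$ and $\eta \in \widetilde{W}_\alpha$.

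The main obstacle I anticipate is precisely this ray-integration estimate: because the support radius of $\eta(x)$ varies with $\eta$ and is not bounded uniformly, one cannot truncate the integral at a fixed radius and must bound $|\eta(x)(y)|$ by a convergent tail $\sum_{m \geq i} \gamma(m)^{-1}$, which dictates the rapid growth imposed on $\gamma$; the bookkeeping of which shell $C_{m-1}$ the point $y + tv$ has escaped from — tractable only because $C_i = K_i \times L_i$ is a product, so escape in the $Y$-direction is governed by $|y| + t$ alone — is the delicate point, of which the one-sided estimate in the first assertion (where $\spt(\eta(x)) \subset L_i$ is at hand) is the easier shadow.
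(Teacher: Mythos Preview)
Your proposal is correct and reaches the same conclusions as the paper, but the two main technical steps are carried out differently.

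For the reverse inclusion $\widetilde{W}_\alpha \subset V$, you use the product partition $\psi_i(x)\phi_j(y)$ and sum over a double index, observing that on $\spt(\psi_i\phi_j)$ one has $(x,y) \notin C_{\max\{i,j\}-1}$. The paper instead constructs a \emph{single-indexed} partition by telescoping,
\[
    \phi_i(x)(y) = 2^{i}\Bigl(\Bigl(\textstyle\sum_{j\leq i} f_j(x)\Bigr)\Bigl(\textstyle\sum_{j\leq i} g_j\Bigr) - \Bigl(\textstyle\sum_{j<i} f_j(x)\Bigr)\Bigl(\textstyle\sum_{j<i} g_j\Bigr)\Bigr),
\]
which is supported in $C_{i+1} \without C_{i-1}$ directly. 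Your version is perhaps more transparent; the paper's avoids tracking the pair $(i,j)$ and the maximum.

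For the openness of $\widetilde{\iota}$, you integrate \emph{outward} along $t \mapsto y + tv$ to where $\eta(x)$ vanishes, and control the integral by the convergent tail $i\gamma(i)^{-1} + \sum_{m \geq i}\gamma(m)^{-1}$; your explicit choice of $\gamma$ then makes this $\leq \tfrac12\alpha(i)^{-1}$. The paper instead invokes \ref{lemma: positive convex decreasing function} to produce a positive convex decreasing $h$ of class $1$ with $h(i-1) \leq \tfrac12\alpha(i)^{-1}$, sets $\delta(i)^{-1} \leq \inf\{-h'(i),\, \tfrac12\alpha(i)^{-1}\}$, and integrates \emph{inward} from the boundary of $L_i$ (for $\eta \in \mathscr{K}_{K_i}(X,\widetilde{\mathbf{E}}_i(Y))$), obtaining $|\eta(x)(y)| \leq h(|y|)$ via the convexity of $h$. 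Your route is more elementary in that it dispenses with \ref{lemma: positive convex decreasing function} entirely, at the price of the explicit tail bookkeeping you flag; the paper's convex interpolant packages that bookkeeping into a single function $h$, which makes the final estimate cleaner but costs an auxiliary lemma.
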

\begin{proof}
    Clearly, the set $\widetilde{W}_\alpha$ is convex and symmetric; by \ref{lemma: double inductive limit K(X, E)}\eqref{lemma: double inductive limit K(X, E) 1}, $\widetilde{W}_\alpha$ is absorbent whenever $\alpha \in \mathscr{P}^\mathscr{P}$. Note that if 
    \begin{gather*}
        i \in \mathscr{P}, \quad \varepsilon = (2i)^{-1} \inf\{ \alpha(j)^{-1} \with j = 1, 2, \dotsc, i \}, \\
        x \in X, \quad v \in Y, \quad |v| = 1,  \quad \text{and} \quad y(t) = (i-t)v, \\
        \eta \in \mathbf{B}(0, \varepsilon) \cap \mathscr{K}_{K_i}(X, \widetilde{\mathbf{E}}_i(Y))
    \end{gather*}
    then we have
    \begin{align*}
        |\eta(x)(y(t))| 
        &\leq \textstyle\int_0^t |\langle -v, \der (\eta(x))(y(s)) \rangle| \ud \mathscr{L}^1 \, s \\
        &\leq \textstyle\int_0^t \|\der (\eta(x))(y(s))\| \ud \mathscr{L}^1 \, s \\
        &\leq t \varepsilon
    \end{align*}
    whenever $0 \leq t \leq i$. Thus, $\mathbf{B}(0, \varepsilon) \cap \mathscr{K}_{K_i}(X, \widetilde{\mathbf{E}}_i(Y)) \subset \widetilde{W}_\alpha \cap \mathscr{K}_{K_i}(X, \widetilde{\mathbf{E}}_i(Y))$, and we conclude $\widetilde{W}_\alpha$ is a neighborhood of $0$ in $\mathscr{K}(X, \widetilde{\mathbf{E}}(Y))$ whenever $\alpha \in \mathscr{P}^\mathscr{P}$.
    
    Let $V$ be a convex neighborhood of $0$ in $\mathscr{K}(X, \widetilde{\mathbf{E}}(Y))$. Then, by \ref{lemma: double inductive limit K(X, E)}\eqref{lemma: double inductive limit K(X, E) 2}, there exists $\beta \in \mathscr{P}^\mathscr{P}$ such that
    \[
        \mathbf{B}(0, \beta(i)^{-1}) \cap \mathscr{K}_{K_i}(X, \widetilde{\mathbf{E}}_i(Y)) \subset V \cap \mathscr{K}_{K_i}(X, \widetilde{\mathbf{E}}_i(Y)) \quad \text{whenever $i \in \mathscr{P}$}.
    \]
    Choose non-negative continuous functions $f_i: X \to \mathbf{R}$ and non-negative functions $g_i: Y \to \mathbf{R}$ of class $1$ for $i \in \mathscr{P}$ such that
    \[
        \spt f_i \subset (\interior K_{i+1}) \without K_{i-1} \quad \text{and} \quad \spt g_i \subset (\interior L_{i+1}) \without L_{i-1}
    \]
    whenever $i \in \mathscr{P}$ and such that
    \[
        \sum_{i=1}^\infty f_i(x) = 1 \quad \text{and} \quad \sum_{i=1}^\infty g_i(y) = 1
    \]
    whenever $(x, y) \in X \times Y$.
    For $i \in \mathscr{P}$, we define a continuous function $\phi_i$ with values in $\widetilde{\mathbf{E}}(Y)$ by
    \[
        \phi_i(x) = 2^{i} \left(\left( \sum_{j = 1}^i f_j(x) \right)\left( \sum_{j = 1}^i g_j \right) - \left( \sum_{j = 1}^{i-1} f_j(x) \right)\left( \sum_{j = 1}^{i-1} g_j \right) \right) \quad \text{for $x \in X$}.
    \]
    Note that
    \[
        (X \times Y) \cap \{ (x, y) \with \phi_i(x)(y) \neq 0 \} \subset (\interior C_{i+1}) \without C_{i-1} \quad \text{for $i \in \mathscr{P}$}
    \]
    and that
    \[
        \quad \sum_{i=1}^\infty 2^{-i} \phi_i(x)(y) = 1 \quad \text{for $(x, y) \in X \times Y$}.
    \]
    Choose $\alpha \in \mathscr{P}^\mathscr{P}$ such that 
    \[
        \alpha(i)^{-1} (|\phi_i(x)(y)| + \|\der (\phi_i(x))(y)\|) \leq \beta(i+1)^{-1}
    \]
    whenever $(x, y) \in X \times Y$ and $i \in \mathscr{P}$. If $i, j \in \mathscr{P}$ and $\eta \in\widetilde{W}_\alpha \cap \mathscr{K}_{K_i}(X, \widetilde{\mathbf{E}}_i(Y))$, then the function $\phi_j \eta$ defined by
    \[
        (\phi_j \eta)(x) = \phi_j(x)\eta(x) \quad \text{whenever $x \in X$}
    \]
    satisfies $\phi_j \eta \in \mathscr{K}_{K_{j+1}}(X, \widetilde{\mathbf{E}}_{j+1}(Y))$ because of \ref{remark: multiplication continuity} and \ref{remark: properties of widetilde E}, and
    \[
        \sup \{ \sup\im \|\der ((\phi_j \eta)(x))\| \with x \in X \} \leq \beta(j+1)^{-1};
    \]
    therefore
    \begin{gather*}
        \phi_j \eta \in V \cap \mathscr{K}_{K_{j+1}}(X, \widetilde{\mathbf{E}}_{j+1}(Y)) \quad \text{if $j \leq i$}, \\
        \phi_j \eta = 0 \quad \text{if $j > i$}.
    \end{gather*}
    Since $V$ is convex, $0 \in V$, and $\eta = \sum_{j = 1}^i 2^{-j} (\phi_j \eta) + 2^{-i} \cdot 0$, we conclude that $\eta \in V$, hence $\widetilde{W}_\alpha \subset V$.
    
    Finally, by \ref{remark: relation iota and widetilde iota}, we see $\widetilde{\iota}$ is a continuous monomorphism; hence, it remains to prove that $\widetilde{\iota}^{-1}$ is continuous. Let $\alpha \in \mathscr{P}^\mathscr{P}$ and let $\widetilde{W}_\alpha$ be a neighborhood of $0$ in $\mathscr{K}(X, \widetilde{\mathbf{E}}(Y))$ defined as above. Now, we aim to find $\delta \in \mathscr{P}^\mathscr{P}$ such that 
    \[
        V_\delta \cap \im \widetilde{\iota} \subset \widetilde{\iota}[\widetilde{W}_\alpha]
    \]
    where $V_\delta$ is a neighborhood of $0$ in $\mathscr{K}(X \times Y, \homomorphism(Y, \mathbf{R}))$ as in \ref{lemma: neighborhoods of 0 in K(X, Z)} with $X$, $K_i$, and $Z$ replaced by $X \times Y$, $C_i$, and $\homomorphism(Y, \mathbf{R})$, respectively. By \ref{lemma: positive convex decreasing function}, there exists a positive convex decreasing function $h: \mathbf{R} \to \mathbf{R}$ of class $1$ such that $h(i-1) \leq 2^{-1}\alpha(i)^{-1}$ whenever $i \in \mathscr{P}$. Then, we define $\delta \in \mathscr{P}^\mathscr{P}$ to satisfy
    \[
        \delta(i)^{-1} \leq \inf \{ -h'(i) , 2^{-1}\alpha(i)^{-1} \} \quad \text{whenever $i \in \mathscr{P}$}
    \]
    and let $V_\delta$ be as in \ref{lemma: neighborhoods of 0 in K(X, Z)}. If 
    \begin{gather*}
        i \in \mathscr{P}, \quad \eta \in \widetilde{\iota}^{-1} [V_\delta] \cap \mathscr{K}_{K_i}(X, \widetilde{\mathbf{E}}_i(Y)), \\
        v \in Y, \quad |v| = 1,  \quad \text{and} \quad y(t) = (i-t)v,
    \end{gather*}
    then we have
    \begin{align*}
        |\eta(x)(y(t))| 
        &\leq \textstyle\int_0^t |\langle -v, \der (\eta(x)) (y(s))| \ud \mathscr{L}^1 \, s \\
        &\leq \textstyle\int_0^t \|\der (\eta(x)) (y(s))\| \ud \mathscr{L}^1 \, s \\
        &\leq \textstyle\int_0^t -h'(i-s) \ud \mathscr{L}^1 \, s \\
        &\leq h(|y(t)|)
    \end{align*}
    whenever $x \in X$ and $0 \leq t \leq i$. It follows that $|\eta(x)(y)| \leq h(|y|)$ whenever $(x, y) \in X \times Y$, hence $\widetilde{\iota}^{-1} [V_\delta] \cap \mathscr{K}_{K_i}(X, \widetilde{\mathbf{E}}_i(Y)) \subset \widetilde{W}_\alpha$.
\end{proof}


\begin{corollary}
    \label{corollary: general homeomorphic embedding E}
    Suppose $X$ is a locally compact Hausdorff space, $K_0 = \varnothing$, $K_i$ is a sequence of compact subsets of $X$ such that $K_i \subset \interior K_{i+1}$ whenever $i \in \mathscr{P}$ and $\bigcup_{i=1}^\infty K_i = X$, and let $C_i = K_i \times (\mathbf{B}(0, i) \cap Y)$ whenever $i \in \mathscr{P} \cup \{ 0 \}$. 
    
    Then, the map $\iota: \mathscr{K}(X, \mathbf{E}(Y)) \to \mathscr{K}(X \times Y, \homomorphism(Y, \mathbf{R}))$ defined by
    \[
        \iota(\eta)(x, y) = \der (\eta(x))(y) \quad \text{for $(x, y) \in X \times Y$}
    \]
    is a homeomorphic embedding. Furthermore, the subsets $W_\alpha$ consisting of $\eta \in \mathscr{K}(X, \mathbf{E}(Y))$ satisfying
    \[
        \| \der(\eta(x))(y) \| \leq \alpha(i)^{-1} \quad \text{whenever $i \in \mathscr{P}$ and $(x, y) \in (X \times Y) \without C_{i-1}$}
    \]
    corresponding to each $\alpha \in \mathscr{P}^\mathscr{P}$ form a fundamental system of neighborhoods of $0$ of $\mathscr{K}(X, \mathbf{E}(Y))$.
\end{corollary}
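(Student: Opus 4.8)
The plan is to deduce the corollary entirely from Theorem~\ref{theorem: general homeomorphic embedding widetilde E} and Remark~\ref{remark: relation iota and widetilde iota}, splitting the argument according to $\dimension Y$. First I would record the bookkeeping identity $W_\alpha = \iota^{-1}[V_\alpha]$, where $V_\alpha$ is the neighbourhood of $0$ produced by Lemma~\ref{lemma: neighborhoods of 0 in K(X, Z)} applied with $X$, $K_i$, and $Z$ replaced by $X \times Y$, $C_i$, and $\homomorphism(Y, \mathbf{R})$, respectively; this is immediate from the formula defining $\iota$ together with $\iota(\eta) \in \mathscr{K}(X \times Y, \homomorphism(Y, \mathbf{R}))$ for $\eta \in \mathscr{K}(X, \mathbf{E}(Y))$, which was established in \ref{lemma: double inductive limit K(X, E)}\eqref{lemma: double inductive limit K(X, E) 3}.

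Next I would handle the claim that $\iota$ is a homeomorphic embedding. If $\dimension Y = 1$, then by \ref{remark: sujectivity continuous monomorphism E} and Remark~\ref{remark: relation iota and widetilde iota} the map $\iota$ is in fact an isomorphism of locally convex spaces, so there is nothing to prove. If $\dimension Y \geq 2$, then by Remark~\ref{remark: relation iota and widetilde iota} the map $I: \mathscr{K}(X, \widetilde{\mathbf{E}}(Y)) \to \mathscr{K}(X, \mathbf{E}(Y))$ of \ref{remark: relation E and widetilde E} is an isomorphism and $\widetilde{\iota} = \iota \circ I$; since $\widetilde{\iota}$ is a homeomorphic embedding by Theorem~\ref{theorem: general homeomorphic embedding widetilde E}, so is $\iota$.

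Finally, to obtain that the $W_\alpha$ form a fundamental system of neighbourhoods of $0$, I would invoke that a homeomorphic embedding identifies $\mathscr{K}(X, \mathbf{E}(Y))$ topologically with the subspace $\im \iota$ of $\mathscr{K}(X \times Y, \homomorphism(Y, \mathbf{R}))$. As $\{\, V_\alpha \with \alpha \in \mathscr{P}^\mathscr{P}\,\}$ is a fundamental system of neighbourhoods of $0$ in $\mathscr{K}(X \times Y, \homomorphism(Y, \mathbf{R}))$ by Lemma~\ref{lemma: neighborhoods of 0 in K(X, Z)}, the sets $V_\alpha \cap \im \iota$ form one in $\im \iota$, hence the sets $\iota^{-1}[V_\alpha] = W_\alpha$ form one in $\mathscr{K}(X, \mathbf{E}(Y))$. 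I do not anticipate a genuine obstacle: the only points requiring a little care are the surjectivity of $\iota$ in the case $\dimension Y = 1$ and verifying that the defining inequalities of $W_\alpha$ match those of $\iota^{-1}[V_\alpha]$ term by term, both of which are routine.
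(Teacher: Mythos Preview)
Your proposal is correct and follows essentially the same approach as the paper's proof, which simply cites \ref{remark: relation iota and widetilde iota}, \ref{theorem: general homeomorphic embedding widetilde E}, and \ref{lemma: neighborhoods of 0 in K(X, Z)}. You have merely spelled out in detail how these three ingredients combine, including the case split on $\dimension Y$ and the identification $W_\alpha = \iota^{-1}[V_\alpha]$.
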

\begin{proof}
    It follows from \ref{remark: relation iota and widetilde iota}, \ref{theorem: general homeomorphic embedding widetilde E}, and \ref{lemma: neighborhoods of 0 in K(X, Z)}.
\end{proof}


\begin{corollary}
    \label{corollary: homeomorphic embedding E}
    The map $\mathbf{E}(Y) \to \mathscr{K}(Y, \homomorphism(Y, \mathbf{R}))$ defined as in \ref{lemma: properties of E}\eqref{lemma: properties of E 1} is a homeomorphic embedding.
    
    Furthermore, the subsets $W_\alpha$ consisting of $\gamma \in \mathbf{E}(Y)$ satisfying
    \[
        \| \der \gamma(y) \| \leq \alpha(i)^{-1} \quad \text{whenever $i \in \mathscr{P}$ and $|y| > i-1$}
    \]
    corresponding to each $\alpha \in \mathscr{P}^\mathscr{P}$ form a fundamental system of neighborhoods of $0$ of $\mathbf{E}(Y)$.
\end{corollary}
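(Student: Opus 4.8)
The plan is to deduce this as the special case of Corollary~\ref{corollary: general homeomorphic embedding E} obtained by taking $X$ to be a one-point topological space $\{\ast\}$. The first step is to record that for a single point $X$, evaluation at $\ast$ gives a canonical isomorphism of locally convex spaces $\mathscr{K}(X, Z) \simeq Z$ for every locally convex Hausdorff space $Z$: the only compact subsets of $X$ are $\varnothing$ and $X$, so $\mathscr{K}(X,Z) = \mathscr{K}_X(X,Z)$, and the topology of uniform convergence on $\mathscr{K}_X(X,Z)$ is exactly the topology of $Z$. In particular $\mathscr{K}(X, \mathbf{E}(Y)) \simeq \mathbf{E}(Y)$, and via $X \times Y \simeq Y$ together with \ref{lemma: properties K(X, Z)} also $\mathscr{K}(X \times Y, \homomorphism(Y, \mathbf{R})) \simeq \mathscr{K}(Y, \homomorphism(Y, \mathbf{R}))$.

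Next I would apply Corollary~\ref{corollary: general homeomorphic embedding E} with the exhausting sequence $K_0 = \varnothing$ and $K_i = X$ for $i \in \mathscr{P}$, which trivially satisfies $K_i \subset \interior K_{i+1}$ and $\bigcup_{i=1}^\infty K_i = X$; then $C_i = X \times (\mathbf{B}(0, i) \cap Y)$, so under $X \times Y \simeq Y$ the set $(X \times Y) \without C_{i-1}$ corresponds to $Y \cap \{ y \with |y| > i-1 \}$. Under the isomorphisms of the previous paragraph a function $\eta \in \mathscr{K}(X, \mathbf{E}(Y))$ corresponds to $\gamma = \eta(\ast) \in \mathbf{E}(Y)$, the map $\iota$ of Corollary~\ref{corollary: general homeomorphic embedding E} corresponds to $\gamma \mapsto (y \mapsto \der \gamma(y))$, i.e.\ to the derivative map of \ref{lemma: properties of E}\eqref{lemma: properties of E 1}, and the neighborhood $W_\alpha$ of Corollary~\ref{corollary: general homeomorphic embedding E} corresponds precisely to the set $W_\alpha$ in the present statement. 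Both assertions then follow at once.

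There is essentially no obstacle: the only point needing care is the verification that evaluation at $\ast$ is an isomorphism of locally convex spaces $\mathscr{K}(X, Z) \simeq Z$ and that this isomorphism, together with $\mathscr{K}(X \times Y, Z) \simeq \mathscr{K}(Y, Z)$, intertwines the map $\iota$ of Corollary~\ref{corollary: general homeomorphic embedding E} with the derivative map; both are immediate from \ref{definition: space of continuous functions with compact support}. Should one prefer a self-contained argument, one could instead repeat the proof of Corollary~\ref{corollary: general homeomorphic embedding E} verbatim with $X$ suppressed, invoking \ref{lemma: properties of E}\eqref{lemma: properties of E 3}, the point-space case of \ref{theorem: general homeomorphic embedding widetilde E}, and \ref{remark: relation iota and widetilde iota}, but the reduction above is shorter.
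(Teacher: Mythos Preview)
Your proposal is correct and follows essentially the same route as the paper: for $\dim Y \geq 2$ the paper applies \ref{corollary: general homeomorphic embedding E} with $X$ a singleton and $K_i = X$ for $i \in \mathscr{P}$, exactly as you do. The only cosmetic difference is that the paper treats $\dim Y = 1$ separately by citing \ref{remark: sujectivity continuous monomorphism E} (where the derivative map is already an isomorphism) and \ref{lemma: neighborhoods of 0 in K(X, Z)}, whereas you invoke \ref{corollary: general homeomorphic embedding E} uniformly; since \ref{corollary: general homeomorphic embedding E} is stated and proved for all $\dim Y \geq 1$, your uniform reduction is valid and arguably cleaner.
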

\begin{proof}
    If $\dimension Y = 1$, it is proved in \ref{remark: sujectivity continuous monomorphism E}, and the postscript follows from \ref{lemma: neighborhoods of 0 in K(X, Z)}. If $\dimension Y \geq 2$, it follows from \ref{corollary: general homeomorphic embedding E} applied with $X$ and $K_i = X$ for $i \in \mathscr{P}$ being a singleton.
\end{proof}


\begin{corollary}
    \label{corollary: dual map epimorphism}
    The dual map $\mathscr{K}(Y, \homomorphism(Y, \mathbf{R}))^* \to \mathbf{E}(Y)^*$ is an epimorphism.
\end{corollary}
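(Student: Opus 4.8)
The plan is to recognise the map in the statement as the transpose $\iota^{*}$ of the homeomorphic embedding
\[
    \iota \colon \mathbf{E}(Y) \to \mathscr{K}(Y, \homomorphism(Y, \mathbf{R}))
\]
of \ref{corollary: homeomorphic embedding E} (equivalently, the map of \ref{lemma: properties of E}\eqref{lemma: properties of E 1}), so that $\iota^{*}(\Lambda) = \Lambda \circ \iota$ for $\Lambda \in \mathscr{K}(Y, \homomorphism(Y, \mathbf{R}))^{*}$, and then to produce, for a given $\lambda \in \mathbf{E}(Y)^{*}$, some $\Lambda$ with $\Lambda \circ \iota = \lambda$ by a Hahn--Banach extension. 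The point is purely formal once the embedding theorem is in place.

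First I would exploit that $\iota$ is a \emph{homeomorphic} embedding, not merely continuous: the image $\iota[\mathbf{E}(Y)]$, equipped with the subspace topology inherited from $\mathscr{K}(Y, \homomorphism(Y, \mathbf{R}))$, is isomorphic as a locally convex space to $\mathbf{E}(Y)$ via $\iota$. Hence, for $\lambda \in \mathbf{E}(Y)^{*}$, the linear form $\mu = \lambda \circ (\iota | \mathbf{E}(Y))^{-1}$ on the linear subspace $\iota[\mathbf{E}(Y)]$ is continuous for that subspace topology. Next I would invoke the Hahn--Banach theorem in the form that a continuous linear form on a linear subspace of a locally convex space extends to a continuous linear form on the whole space (indeed $\mu$ is dominated on $\iota[\mathbf{E}(Y)]$ by the restriction of some continuous seminorm on $\mathscr{K}(Y, \homomorphism(Y, \mathbf{R}))$, and the analytic Hahn--Banach theorem extends it while preserving that bound, hence continuity). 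This yields $\Lambda \in \mathscr{K}(Y, \homomorphism(Y, \mathbf{R}))^{*}$ with $\Lambda | \iota[\mathbf{E}(Y)] = \mu$, and therefore $\iota^{*}(\Lambda) = \Lambda \circ \iota = \lambda$, proving surjectivity.

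I do not anticipate a genuine obstacle here: note in particular that no closedness of $\iota[\mathbf{E}(Y)]$ is required, since Hahn--Banach extension of continuous linear forms is available from arbitrary subspaces. The only thing demanding a moment's care is the first step — verifying that the map named in the statement really is $\iota^{*}$ for the embedding $\iota$ of \ref{corollary: homeomorphic embedding E}, so that one may use the homeomorphic-embedding property and conclude that the subspace topology on $\iota[\mathbf{E}(Y)]$ coincides with the topology of $\mathbf{E}(Y)$; everything after that is the standard extension argument.
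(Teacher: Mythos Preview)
Your proposal is correct and follows the same approach as the paper: invoke the homeomorphic embedding of \ref{corollary: homeomorphic embedding E} and apply the Hahn--Banach theorem to extend a given functional on $\mathbf{E}(Y)$, viewed via $\iota$ as a continuous linear form on the subspace $\iota[\mathbf{E}(Y)]$, to all of $\mathscr{K}(Y, \homomorphism(Y, \mathbf{R}))$. The paper's proof is simply a one-line citation of these two ingredients.
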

\begin{proof}
    It follows from \ref{corollary: homeomorphic embedding E} and the Hahn-Banach theorem \cite[II, \S 4, No.\ 1,  Proposition 2]{Bourbaki_TVS_MR910295}.
\end{proof}


\begin{remark}
    \label{remark: non-unique representation}
    The following example shows that the dual map in \ref{corollary: dual map epimorphism} is not injective for $\dimension Y = k > 1$. Assume $Y = \mathbf{R}^k$. Whenever $\omega \in \mathscr{D}(\mathbf{R}, \mathbf{R})$, $0 \notin \spt \omega'$, and $L$ is a non-zero anti-symmetric endomorphism on $\mathbf{R}^k$, the functional $\mu \in \mathscr{K}(\mathbf{R}^k, \homomorphism(\mathbf{R}^k, \mathbf{R}))^*$ defined by
    \[
        \mu(\delta) = \textstyle\int \langle X(y), \delta(y) \rangle \ud \mathscr{L}^k \, y \quad \text{for $\delta \in \mathscr{K}(\mathbf{R}^k, \homomorphism(\mathbf{R}^k, \mathbf{R}))$}
    \]
    belongs to the kernel of the dual map $\mathscr{K}(\mathbf{R}^k, \homomorphism(\mathbf{R}^k, \mathbf{R}))^* \to \mathbf{E}(\mathbf{R}^k)^*$, where $X \in \mathscr{D}(\mathbf{R}^k, \mathbf{R}^k)$ is defined by
    \[
        X(y) = \omega(|y|) L(y) \quad \text{for $y \in \mathbf{R}^k$},
    \]
    because $\divergence X = 0$.
\end{remark}


To unify the test function spaces from domain and codomain, we also introduce the test function space on the product.


\begin{definition}
    \label{definition: function space H}
    Whenever $C$ is a compact subset of $U \times Y$, the function space $\mathbf{H}_C(U \times Y, \mathbf{R}^n)$ consists of all functions $\eta: U \times Y \to \mathbf{R}^n$ such that
    \begin{enumerate}
        \item 
        \label{definition: function space H 1}
            $\eta(x, \cdot) \bullet v \in \mathbf{E}(Y)$ whenever $x \in U$ and $v \in \mathbf{R}^n$.
        \item
        \label{definition: function space H 2}
            The function
            \[
                (x, y) \mapsto \der (\eta(x, \cdot)) (y) \in \homomorphism(Y, \mathbf{R}^n)
            \]
            is continuous with compact support in $C$.
    \end{enumerate}
    We endow $\mathbf{H}_C(U \times Y, \mathbf{R}^n)$ with the norm (see \ref{remark: H is a normed space}) whose value equal
    \[
        \sup \{ \|\der(\eta(x, \cdot))(y)\| \with (x, y) \in U \times Y \} \quad \text{at $\eta \in \mathbf{H}_C(U \times Y, \mathbf{R}^n)$},
    \]
    and endow 
    \[
        \mathbf{H}(U \times Y, \mathbf{R}^n) = \bigcup \{ \mathbf{H}_C(U \times Y, \mathbf{R}^n) \with \text{$C$ is a compact subset of $U \times Y$} \}
    \]
    with the locally convex final topology induced by the inclusion maps 
    \[
        \mathbf{H}_C(U \times Y, \mathbf{R}^n) \to \mathbf{H}(U \times Y, \mathbf{R}^n).
    \]
\end{definition}


\begin{remark}
    \label{remark: H is a normed space}
    We shall verify that $\mathbf{H}_C(U \times Y, \mathbf{R}^n)$ is a normed space. Let $\eta \in \mathbf{H}_C(U \times Y, \mathbf{R}^n)$. Note that \ref{definition: function space H}\eqref{definition: function space H 1} implies that $\eta(x, 0) = 0$ whenever $x \in U$. If $(x, y) \in U \times Y$, then we have
    \[
        \eta(x, y) = \textstyle\int_0^1 \langle y, \der (\eta(x, \cdot))(t y) \rangle \ud \mathscr{L}^1 \, t;
    \]
    in particular, $\eta$ is continuous. If $\eta$ satisfies
    \[
        \sup \{ \|\der(\eta(x, \cdot))(y)\| \with x \in U \text{ and } y\in Y \} = 0,
    \]
    then $\eta = 0$. Thus, $\mathbf{H}_C(U \times Y, \mathbf{R}^n)$ is a normed space.
\end{remark}


\begin{lemma}
    \label{lemma: properties of H}
    The following two statements hold.
    \begin{enumerate}
        \item 
        \label{lemma: properties of H 1}
            Whenever $K$ is a compact subset of $U$ and $0 \leq s < \infty$, we have the following isomorphisms of Banach spaces
            \[
                \mathbf{H}_{K \times \mathbf{B}(0, s)}(U \times Y, \mathbf{R}^n) \simeq \mathscr{K}_K(U, \mathbf{E}_s(Y))^n;
            \]
            in particular, $\mathbf{H}_{K \times \mathbf{B}(0, s)}(U \times Y, \mathbf{R}^n)$ is a separable Banach space.
        \item 
        \label{lemma: properties of H 2}
            Whenever $C_i$ is a sequence of compact subsets of $U \times Y$ such that $U \times Y = \bigcup_{i = 1}^\infty C_i$ and $C_i \subset \interior C_{i+1}$ for $i \in \mathscr{P}$, $\mathbf{H}(U \times Y, \mathbf{R}^n)$ is the strict inductive limit of $\mathbf{H}_{C_i}(U \times Y, \mathbf{R}^n)$ as $i \to \infty$.
    \end{enumerate}
\end{lemma}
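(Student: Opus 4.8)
The plan is to establish (1) by writing down the isomorphism explicitly, and then to deduce (2) from (1) together with an exhaustion argument and the criterion for strict inductive limits recorded in \ref{remark: criterion strict inductive limit}.

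For (1), splitting an $\mathbf{R}^n$-valued function into its $n$ scalar components gives, up to equivalence of norms, $\mathbf{H}_{K\times\mathbf{B}(0,s)}(U\times Y,\mathbf{R}^n)\simeq\mathbf{H}_{K\times\mathbf{B}(0,s)}(U\times Y,\mathbf{R})^n$ (cf.\ \ref{lemma: properties K(X, Z)}\eqref{lemma: K(X, Z) product}), so it suffices to produce an isomorphism $\mathbf{H}_{K\times\mathbf{B}(0,s)}(U\times Y,\mathbf{R})\simeq\mathscr{K}_K(U,\mathbf{E}_s(Y))$. I would use the map $\Psi$ sending $\eta$ to $x\mapsto\eta(x,\cdot)$. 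The support condition in \ref{definition: function space H}\eqref{definition: function space H 2} forces $\spt\der(\eta(x,\cdot))\subset\mathbf{B}(0,s)$ for each fixed $x$, so with \ref{definition: function space H}\eqref{definition: function space H 1} the value $\eta(x,\cdot)$ lies in $\mathbf{E}_s(Y)$; moreover $\Psi(\eta)(x)=0$ forces $\eta(x,\cdot)\equiv 0$ (it vanishes at $0$ and has vanishing derivative), so $\Psi(\eta)$ vanishes off the projection to $U$ of $\spt((x,y)\mapsto\der(\eta(x,\cdot))(y))$ and hence has compact support in $K$. Comparing the supremum norms gives $\|\Psi(\eta)\|=\|\eta\|$, and the candidate inverse $g\mapsto((x,y)\mapsto g(x)(y))$ visibly inverts $\Psi$ and, using that \ref{definition: function space H}\eqref{definition: function space H 1} is immediate and the continuity equivalence discussed next, maps into $\mathbf{H}_{K\times\mathbf{B}(0,s)}(U\times Y,\mathbf{R})$ (which is a normed space by \ref{remark: H is a normed space}).

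The crux --- and the step I expect to require the most care --- is the equivalence between continuity of $x\mapsto\eta(x,\cdot)$ as a map into $\mathbf{E}_s(Y)$ and joint continuity of $(x,y)\mapsto\der(\eta(x,\cdot))(y)$. By \ref{lemma: properties of E}\eqref{lemma: properties of E 1} the derivative is a norm-preserving embedding of $\mathbf{E}_s(Y)$ into $\mathscr{K}_{\mathbf{B}(0,s)}(Y,\homomorphism(Y,\mathbf{R}))$, and since $\mathbf{E}_s(Y)$ is complete by \ref{lemma: properties of E}\eqref{lemma: properties of E 2} its image is a closed subspace; hence continuity of a map into $\mathbf{E}_s(Y)$ is equivalent to continuity of its composition with the derivative into $\mathscr{K}_{\mathbf{B}(0,s)}(Y,\homomorphism(Y,\mathbf{R}))$, which by the exponential law \ref{lemma: exponential law} is exactly joint continuity of $(x,y)\mapsto\der(\eta(x,\cdot))(y)$, i.e.\ \ref{definition: function space H}\eqref{definition: function space H 2}. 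With this in hand $\Psi$ and its inverse are continuous (in fact isometric), which proves (1); the separability assertion follows from \ref{lemma: separability of K_K(X, Z)}, applied with the second-countable space $U$ and the separable Banach space $\mathbf{E}_s(Y)$ (by \ref{lemma: properties of E}\eqref{lemma: properties of E 2}), together with stability of separability under finite products.

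For (2), note first that whenever $C\subset C'$ are compact subsets of $U\times Y$ the space $\mathbf{H}_C(U\times Y,\mathbf{R}^n)$ is a subspace of $\mathbf{H}_{C'}(U\times Y,\mathbf{R}^n)$ carrying the restricted norm (the norm does not refer to $C$), and it is closed there: along a norm-convergent sequence the $y$-derivatives converge uniformly, so the support condition passes to the limit, while \ref{definition: function space H}\eqref{definition: function space H 1} and the continuity in \ref{definition: function space H}\eqref{definition: function space H 2} are inherited from membership in $\mathbf{H}_{C'}$. Since every compact $C$ lies in a box $K\times\mathbf{B}(0,s)$ with $K$ the (compact) image of $C$ in $U$, part (1) shows $\mathbf{H}_{K\times\mathbf{B}(0,s)}(U\times Y,\mathbf{R}^n)$ is a Banach space, hence so is every $\mathbf{H}_C(U\times Y,\mathbf{R}^n)$. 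Next, $\mathbf{H}(U\times Y,\mathbf{R}^n)=\bigcup_i\mathbf{H}_{C_i}(U\times Y,\mathbf{R}^n)$: if $\eta\in\mathbf{H}_C(U\times Y,\mathbf{R}^n)$ then the compact set $\spt((x,y)\mapsto\der(\eta(x,\cdot))(y))$ is covered by finitely many of the open sets $\interior C_i$, hence contained in a single $C_i$ by nestedness. Finally, to invoke \ref{remark: criterion strict inductive limit} I check that the topology defining $\mathbf{H}(U\times Y,\mathbf{R}^n)$ --- the locally convex final topology for all the inclusions $\mathbf{H}_C\to\mathbf{H}$ --- agrees with the one for the subfamily $\mathbf{H}_{C_i}\to\mathbf{H}$: every $\mathbf{H}_C\to\mathbf{H}$ factors as $\mathbf{H}_C\to\mathbf{H}_{C_i}\to\mathbf{H}$ through some $C_i\supset C$ with the first arrow continuous, so the subfamily already forces continuity of every $\mathbf{H}_C\to\mathbf{H}$. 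As the inclusions $\mathbf{H}_{C_i}\to\mathbf{H}_{C_j}$ for $i\le j$ are isometric --- hence homeomorphic --- embeddings, \ref{remark: criterion strict inductive limit} yields that $\mathbf{H}(U\times Y,\mathbf{R}^n)$ is the strict inductive limit of the $\mathbf{H}_{C_i}(U\times Y,\mathbf{R}^n)$.
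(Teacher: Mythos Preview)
Your proof is correct and follows essentially the same approach as the paper: both arguments for (1) rest on the norm-preserving derivative embedding $\mathbf{E}_s(Y)\hookrightarrow\mathscr{K}_{\mathbf{B}(0,s)}(Y,\homomorphism(Y,\mathbf{R}))$ from \ref{lemma: properties of E}\eqref{lemma: properties of E 1} together with the exponential law \ref{lemma: exponential law}, the only difference being that you split into scalar components first while the paper passes to the derivative space first and splits afterwards; for (2) both arguments invoke \ref{remark: criterion strict inductive limit} via the fact that the inclusions $\mathbf{H}_{C_i}\to\mathbf{H}_{C_j}$ are norm-preserving, and your extra verification that the locally convex final topology for the subfamily $(C_i)$ agrees with the one for all compact $C$ is a point the paper leaves implicit.
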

\begin{proof}
    To prove \eqref{lemma: properties of H 1}, let $K$ be a compact subset of $U$ and $0 \leq s < \infty$. From \ref{definition: function space H}\eqref{definition: function space H 2}, the map 
    \[
        \mathbf{H}_{K \times \mathbf{B}(0, s)}(U \times Y, \mathbf{R}^n) \to \mathscr{K}_{K \times \mathbf{B}(0, s)}(U \times Y, \homomorphism(Y, \mathbf{R}^n))
    \]
    defined by
    \[
        \eta \mapsto [(x, y) \mapsto \der(\eta(x, \cdot))(y)]
    \]
    is a homeomorphic embedding; noting from \ref{lemma: properties K(X, Z)}\eqref{lemma: K(X, Z) product} that  
    \[
        \mathscr{K}_{K \times \mathbf{B}(0, s)}(U \times Y, \homomorphism(Y, \mathbf{R}^n)) \simeq \mathscr{K}_{K \times \mathbf{B}(0, s)}(U \times Y, \homomorphism(Y, \mathbf{R}))^n,
    \]
    so is the map 
    \[
        f: \mathbf{H}_{K \times \mathbf{B}(0, s)}(U \times Y, \mathbf{R}^n) \to \mathscr{K}_{K \times \mathbf{B}(0, s)}(U \times Y, \homomorphism(Y, \mathbf{R}))^n.
    \]
    On the other hand, recall from \ref{lemma: double inductive limit K(X, E)}\eqref{lemma: double inductive limit K(X, E) 3} that the map 
    \[
        J: \mathscr{K}_K(U, \mathbf{E}_s(Y)) \to \mathscr{K}_{K \times \mathbf{B}(0, s)}(U \times Y, \homomorphism(Y, \mathbf{R}))
    \]
    defined by
    \[
        J(\eta)(x, y) = \der(\eta(x))(y) \quad \text{whenever $(x, y) \in U \times Y$}
    \]
    is a homeomorphic embedding, and so is its Cartesian product $g = \prod_{i = 1}^n J$. Since $\im f = \im g$, it follows that $g^{-1} \circ f$ defines an isomorphism between the normed spaces $\mathbf{H}_{K \times \mathbf{B}(0, s)}(U \times Y, \mathbf{R}^n)$ and $\mathscr{K}_K(U, \mathbf{E}_s(Y))^n$; in particular, $\mathbf{H}_{K \times \mathbf{B}(0, s)}(U \times Y, \mathbf{R}^n)$ is a separable Banach space by \ref{lemma: properties of E}\eqref{lemma: properties of E 2} and \ref{lemma: separability of K_K(X, Z)}.

    Since the inclusion map $\mathbf{H}_{C_i}(U \times Y, \mathbf{R}^n) \to \mathbf{H}_{C_j}(U \times Y, \mathbf{R}^n)$ is a norm preserving embedding whenever $i \leq j \in \mathscr{P}$, 
    \eqref{lemma: properties of H 2} follows from \ref{remark: criterion strict inductive limit}.
\end{proof}


\begin{remark}
    \label{remark: isomorphism H}
    Suppose $K_i$ is a sequence of compact subsets of $U$ such that $U = \bigcup_{i=1}^\infty K_i$ and $K_i \subset \interior K_{i+1}$ for $i \in \mathscr{P}$. By \ref{lemma: properties K(X, Z)}\eqref{lemma: K(X, Z) pushforward}\eqref{lemma: K(X, Z) product}, \ref{theorem: inductive limit and finite product}, and \ref{lemma: double inductive limit K(X, E)}\eqref{lemma: double inductive limit K(X, E) 2}, we see $\mathscr{K}(U, \mathbf{E}(Y))^n \simeq \mathscr{K}(U, \mathbf{E}(Y)^n) \simeq \mathscr{K}(U, \mathbf{R}^n \otimes \mathbf{E}(Y))$ is the strict inductive limit of $\mathscr{K}_{K_i}(U, \mathbf{E}_i(Y))^n \simeq \mathscr{K}_{K_i}(U, \mathbf{E}_i(Y)^n) \simeq \mathscr{K}_{K_i}(U, \mathbf{R}^n \otimes \mathbf{E}_i(Y))$ as $i \to \infty$. Moreover, we have 
    \[
        \mathbf{H}(U \times Y, \mathbf{R}^n) \simeq \mathscr{K}(U, \mathbf{E}(Y))^n \simeq \mathscr{K}(U, \mathbf{E}(Y)^n) \simeq \mathscr{K}(U, \mathbf{R}^n \otimes \mathbf{E}(Y))
    \]
    by \ref{lemma: properties of H}\eqref{lemma: properties of H 1}\eqref{lemma: properties of H 2}.
\end{remark}


\begin{theorem}
    \label{theorem: homeomorphic embedding of H}
    The map $\iota: \mathbf{H}(U \times Y, \mathbf{R}^n) \to \mathscr{K}(U \times Y, \homomorphism(Y, \mathbf{R^n}))$ defined by
    \[
        \iota(\eta) = (\der \eta(x, \cdot))(y) \quad \text{whenever $\eta \in \mathbf{H}(U \times Y, \mathbf{R}^n)$ and $(x, y) \in U \times Y$}
    \]
    is a homeomorphic embedding. 
    
    Furthermore, if $K_0 = \varnothing$ and $K_i$ is a sequence of compact subsets of $U$ such that $U = \bigcup_{i=1}^\infty K_i$ and $K_i \subset \interior K_{i+1}$ whenever $i \in \mathscr{P}$, then the subsets $W_\alpha$ consisting of $\eta \in \mathbf{H}(U \times Y, \mathbf{R}^n)$ satisfying
    \[
        \sup\im \|(\der \eta(x, \cdot))(y)\| \leq \alpha(i)^{-1}
    \]
    whenever $i \in \mathscr{P}$ and $(x, y) \in (U \times Y) \without (K_{i-1} \times \mathbf{B}(0, i-1))$, corresponding to each $\alpha \in \mathscr{P}^\mathscr{P}$ form a fundamental system of neighborhoods of $0$ of $\mathbf{H}(U \times Y, \mathbf{R}^n)$.
\end{theorem}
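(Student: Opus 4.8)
The plan is to reduce the theorem to the embedding theorem \ref{corollary: general homeomorphic embedding E} for $\mathscr{K}(X, \mathbf{E}(Y))$ by means of the isomorphism $\mathbf{H}(U \times Y, \mathbf{R}^n) \simeq \mathscr{K}(U, \mathbf{E}(Y))^n$ recorded in \ref{lemma: properties of H}\eqref{lemma: properties of H 1}\eqref{lemma: properties of H 2} and \ref{remark: isomorphism H}. First I would fix a sequence $K_i$ of compact subsets of $U$ with $K_0 = \varnothing$, $K_i \subset \interior K_{i+1}$ for $i \in \mathscr{P}$ and $\bigcup_{i=1}^\infty K_i = U$, and put $C_i = K_i \times (\mathbf{B}(0,i) \cap Y)$, so that $C_i$ is an exhausting sequence of compact subsets of $U \times Y$ of the kind permitted in \ref{lemma: properties of H}\eqref{lemma: properties of H 2} and in \ref{corollary: general homeomorphic embedding E}.

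The first step is to identify $\iota$ with an $n$-fold Cartesian power of $\iota_0 \colon \mathscr{K}(U, \mathbf{E}(Y)) \to \mathscr{K}(U \times Y, \homomorphism(Y, \mathbf{R}))$, the homeomorphic embedding of \ref{corollary: general homeomorphic embedding E}. On the codomain side, the canonical isomorphism $\homomorphism(Y, \mathbf{R}^n) \simeq \homomorphism(Y, \mathbf{R})^n$ together with \ref{lemma: properties K(X, Z)}\eqref{lemma: K(X, Z) product} gives $\mathscr{K}(U \times Y, \homomorphism(Y, \mathbf{R}^n)) \simeq \mathscr{K}(U \times Y, \homomorphism(Y, \mathbf{R}))^n$, under which $\iota(\eta)$ corresponds to the $n$-tuple whose $v$-th entry is $(x, y) \mapsto \der(\eta(x, \cdot))(y) \bullet e_v$, where $e_1, \dotsc, e_n$ is the standard basis of $\mathbf{R}^n$. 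On the domain side, the isomorphism $\mathbf{H}(U \times Y, \mathbf{R}^n) \simeq \mathscr{K}(U, \mathbf{E}(Y))^n$ was set up in the proof of \ref{lemma: properties of H}\eqref{lemma: properties of H 1} precisely as $g^{-1} \circ f$, where $g$ is the Cartesian power $\prod_{v=1}^n \iota_0$ (this is the map called $J$ there, which is the embedding of \ref{lemma: double inductive limit K(X, E)}\eqref{lemma: double inductive limit K(X, E) 3}) and $f$ sends $\eta$ to $(x, y) \mapsto \der(\eta(x, \cdot))(y)$. Tracing through these identifications shows that $\iota$ becomes literally $\prod_{v=1}^n \iota_0$; since $\iota_0$ is a homeomorphic embedding by \ref{corollary: general homeomorphic embedding E} and a finite Cartesian product of homeomorphic embeddings is again a homeomorphic embedding, $\iota$ is a homeomorphic embedding.

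For the neighborhood basis, recall from \ref{corollary: general homeomorphic embedding E}, applied with the chosen exhausting sequence $K_i$, that the sets $W_\alpha^0$ of $\xi \in \mathscr{K}(U, \mathbf{E}(Y))$ with $\|\der(\xi(x))(y)\| \leq \alpha(i)^{-1}$ whenever $i \in \mathscr{P}$ and $(x, y) \in (U \times Y) \without C_{i-1}$ form a fundamental system of neighborhoods of $0$. Because $\mathscr{P}^\mathscr{P}$ is directed under pointwise maximum, the diagonal family $(W_\alpha^0)^n$, $\alpha \in \mathscr{P}^\mathscr{P}$, is then a fundamental system of neighborhoods of $0$ in $\mathscr{K}(U, \mathbf{E}(Y))^n$; transporting it to $\mathbf{H}(U \times Y, \mathbf{R}^n)$ through the isomorphism above produces the sets of those $\eta$ for which each component norm $\| \der(\eta(x, \cdot))(y) \bullet e_v \|$ is at most $\alpha(i)^{-1}$ off $C_{i-1}$. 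The final step is to observe that the maximum of these $n$ component norms is equivalent, with constants depending only on $n$ and the norms on $\mathbf{R}^n$ and $Y$, to the operator norm $\|\der(\eta(x, \cdot))(y)\|$ on $\homomorphism(Y, \mathbf{R}^n)$ appearing in the statement; replacing $\alpha$ by a suitable $\alpha' \in \mathscr{P}^\mathscr{P}$ to absorb these constants shows that the family $W_\alpha$ of the theorem and the transported family are cofinal in each other, so $W_\alpha$ is itself a fundamental system of neighborhoods of $0$ in $\mathbf{H}(U \times Y, \mathbf{R}^n)$.

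The only step I expect to require genuine care is the bookkeeping in the first step: checking that the canonical isomorphisms of the source and target spaces really do intertwine $\iota$ with $\prod_{v=1}^n \iota_0$, rather than with some permuted or otherwise twisted variant. This is essentially forced by the way the isomorphism in \ref{lemma: properties of H}\eqref{lemma: properties of H 1} is defined, but it deserves to be spelled out. The remaining ingredients — stability of homeomorphic embeddings and of fundamental systems of neighborhoods under finite Cartesian products, and the norm equivalence on $\homomorphism(Y, \mathbf{R}^n)$ — are routine.
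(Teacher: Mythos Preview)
Your proposal is correct and follows essentially the same route as the paper for the main assertion: both factor $\iota$ through the isomorphism $\mathbf{H}(U \times Y, \mathbf{R}^n) \simeq \mathscr{K}(U, \mathbf{E}(Y))^n$ from \ref{remark: isomorphism H} and then invoke \ref{corollary: general homeomorphic embedding E} on each factor together with \ref{lemma: properties K(X, Z)}\eqref{lemma: K(X, Z) product} on the codomain. For the postscript, the paper is slightly more direct: once $\iota$ is known to be a homeomorphic embedding, the sets $W_\alpha$ are precisely $\iota^{-1}[V_\alpha]$ for the $V_\alpha$ of \ref{lemma: neighborhoods of 0 in K(X, Z)} applied to $\mathscr{K}(U \times Y, \homomorphism(Y, \mathbf{R}^n))$ with the exhaustion $C_i$, so they form a fundamental system without any norm-equivalence detour; your argument via the diagonal products $(W_\alpha^0)^n$ and the comparison of component norms with the operator norm is valid but unnecessary.
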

\begin{proof}
    By \ref{remark: isomorphism H}, we have $\mathbf{H}(U \times Y, \mathbf{R}^n) \simeq \mathscr{K}(U, \mathbf{E}(Y))^n$. Since, by \ref{corollary: general homeomorphic embedding E}, \ref{theorem: inductive limit and finite product}, and \ref{lemma: properties K(X, Z)}\eqref{lemma: K(X, Z) product}, the composition
    \[
        \mathscr{K}(U, \mathbf{E}(Y))^n \to \mathscr{K}(U \times Y, \homomorphism(Y, \mathbf{R}))^n \simeq \mathscr{K}(U \times Y, \homomorphism(Y, \mathbf{R}^n))
    \]
    is a homeomorphic embedding and the main assertion follows. The postscript follows from \ref{lemma: neighborhoods of 0 in K(X, Z)}.
\end{proof}


\begin{remark}
    \label{remark: homeomorphic isomorphism H when Y = R}
    In case $Y = \mathbf{R}$, the homeomorphic embedding 
    \[
        \mathbf{H}(U \times \mathbf{R}, \mathbf{R}^n) \to \mathscr{K}(U \times \mathbf{R}, \homomorphism(\mathbf{R}, \mathbf{R}^n)) \simeq \mathscr{K}(U \times \mathbf{R}, \mathbf{R}^n)
    \]
    is surjective; in fact, for $\phi \in \mathscr{K}(U \times \mathbf{R}, \mathbf{R}^n)$, the function $\eta: U \times \mathbf{R} \to \mathbf{R}^n$ defined by
    \[
        \eta(x, y) = \textstyle\int_0^y \phi(x, z) \ud \mathscr{L}^1 \, z
    \]
    is a member in $\mathbf{H}(U \times \mathbf{R}, \mathbf{R}^n)$ whose image under the embedding equals $\phi$.
\end{remark}


\begin{lemma}
    \label{lemma: inductive limit, dense subset, extension}
    Suppose $A$ is a directed set, $(F_\alpha, f_{\beta \alpha})$ is an inductive system of locally convex spaces relative to $A$, $(D_\alpha, \delta_{\beta \alpha})$ is an inductive system of vector spaces relative to $A$, we denote by $(F, f_\alpha)$ and $(D, \delta_\alpha)$ the inductive limit of $(F_\alpha, f_{\beta \alpha})$ and $(D_\alpha, \delta_{\beta \alpha})$ respectively, for $\alpha \in A$, $D_\alpha$ is a dense subspace of $F_\alpha$ and the inclusion map $\iota_\alpha: D_\alpha \to F_\alpha$ satisfies
    \[
        f_{\beta \alpha} \circ \iota_\alpha = \iota_\beta \circ \delta_{\beta \alpha} \quad \text{whenever $\alpha \leq \beta \in A$}.
    \]
    Then, the vector space $D = \bigcup_{\alpha \in A} \delta_\alpha[D_\alpha]$ can be identified as a dense subspace of $F$. Moreover, if $D_\alpha$ is endowed with the subspace topology induced by $\iota_\alpha$, $G$ is a complete Hausdorff locally convex space, and the family of continuous linear maps $g_\alpha: D_\alpha \to G$ for $\alpha \in A$ satisfies
    \[
        g_\alpha = g_\beta \circ \delta_{\beta \alpha} \quad \text{whenever $\alpha \leq \beta \in A$},
    \]
    then the inductive limit of the unique extensions of $g_\alpha$ equals the unique extension of the inductive limit of $g_\alpha$; here, by convention, we also view $G$ as the limit of the constant inductive system $(G, \mathbf{1}_G)$ of locally convex spaces relative to $A$.
\end{lemma}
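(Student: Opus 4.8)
The plan is to realise $D$ as a genuine dense subspace of $F$ by a comparison map coming from the universal property, and then to check that ``extending by continuity'' commutes with forming inductive limits, essentially by juggling the two universal properties involved.

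First I would construct the comparison map. The inclusions $\iota_\alpha\colon D_\alpha\to F_\alpha$ constitute a morphism of inductive systems, since $f_{\beta\alpha}\circ\iota_\alpha=\iota_\beta\circ\delta_{\beta\alpha}$ whenever $\alpha\le\beta\in A$; hence the universal property in \ref{definition: inductive system and inductive limit} yields a unique linear map $\iota\colon D\to F$ with $\iota\circ\delta_\alpha=f_\alpha\circ\iota_\alpha$ for $\alpha\in A$, and this $\iota$, being the inductive limit of the $\iota_\alpha$, is injective because every $\iota_\alpha$ is and $A$ is directed (see \ref{remark: inductive limit of maps}). For density I would use that $F=\bigcup_{\alpha\in A}f_\alpha[F_\alpha]$ and $D=\bigcup_{\alpha\in A}\delta_\alpha[D_\alpha]$ as sets (see \ref{remark: inductive limit of sets} and \ref{remark: inductive limit of locally convex spaces}), so that $\iota[D]=\bigcup_{\alpha\in A}f_\alpha[\iota_\alpha[D_\alpha]]$; since $\iota_\alpha[D_\alpha]$ is dense in $F_\alpha$ and $f_\alpha$ is continuous, one gets $f_\alpha[F_\alpha]\subset\closure\bigl(f_\alpha[\iota_\alpha[D_\alpha]]\bigr)\subset\closure\bigl(\iota[D]\bigr)$, and taking the union over $\alpha$ gives $F\subset\closure\bigl(\iota[D]\bigr)$. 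Thus $D$ identifies with the subspace $\iota[D]$ of $F$ carrying the induced topology, under which each $\delta_\alpha\colon D_\alpha\to D$ is continuous because $\iota\circ\delta_\alpha=f_\alpha\circ\iota_\alpha$ is.

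For the second part I would first extend each $g_\alpha$. As $D_\alpha$ is dense in $F_\alpha$ and $G$ is complete and Hausdorff, $g_\alpha$ has a unique continuous linear extension $\bar g_\alpha\colon F_\alpha\to G$; and for $\alpha\le\beta$ the continuous maps $\bar g_\beta\circ f_{\beta\alpha}$ and $\bar g_\alpha$ agree on the dense subspace $\iota_\alpha[D_\alpha]$ because $\bar g_\beta\circ f_{\beta\alpha}\circ\iota_\alpha=\bar g_\beta\circ\iota_\beta\circ\delta_{\beta\alpha}=g_\beta\circ\delta_{\beta\alpha}=g_\alpha=\bar g_\alpha\circ\iota_\alpha$, hence they coincide. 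The universal property of $(F,f_\alpha)$ then yields the unique continuous linear $\bar g\colon F\to G$ with $\bar g\circ f_\alpha=\bar g_\alpha$; this is the inductive limit of the extensions $\bar g_\alpha$. On the other hand the $g_\alpha$ are already compatible, so there is a unique linear $g\colon D\to G$ with $g\circ\delta_\alpha=g_\alpha$, namely the inductive limit of the $g_\alpha$. The one identity to verify is $\bar g\circ\iota=g$: both sides are linear maps $D\to G$, and for every $\alpha$ one has $(\bar g\circ\iota)\circ\delta_\alpha=\bar g\circ f_\alpha\circ\iota_\alpha=\bar g_\alpha\circ\iota_\alpha=g_\alpha$, so they agree by the uniqueness clause of the universal property of $D$. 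Hence $g$ is the restriction of the continuous map $\bar g$ to the dense subspace $D$, so $\bar g$ is precisely the unique continuous extension of $g$ to $F$ — which is the asserted equality of ``the inductive limit of the extensions'' with ``the extension of the inductive limit''.

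I do not expect a serious obstacle here; the argument is formal bookkeeping, the only genuinely substantial input being that a filtered inductive limit of injections is an injection (\ref{remark: inductive limit of maps}), which is what lets $D$ sit inside $F$ rather than merely map to it. One subtlety worth flagging is that we should not presume from the outset that $g\colon D\to G$ is continuous for the subspace topology inherited from $F$ — that topology may be strictly coarser than the inductive-limit topology on $D$ — since continuity of $g$ is obtained afterwards, for free, from the identity $g=\bar g\circ\iota$.
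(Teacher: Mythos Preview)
Your proof is correct and follows essentially the same approach as the paper: construct the comparison map $\iota$ as the inductive limit of the inclusions, use \ref{remark: inductive limit of maps} for injectivity, prove density via $f_\alpha[F_\alpha]\subset\closure f_\alpha[\iota_\alpha[D_\alpha]]\subset\closure\iota[D]$, then check that the inductive limit of the extensions $\bar g_\alpha$ restricts to the inductive limit $g$ of the $g_\alpha$. Your write-up is in fact slightly more explicit than the paper's in verifying the compatibility $\bar g_\beta\circ f_{\beta\alpha}=\bar g_\alpha$ and in flagging the distinction between the subspace and inductive-limit topologies on $D$.
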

\begin{proof}
    From \ref{remark: inductive limit of maps}, the inductive limit $\iota$ of the inclusion maps $\iota_\alpha: D_\alpha \to F_\alpha$ is a monomorphism, and we identify $D$ as its image under $\iota$ in $F$. Since $D_\alpha$ is dense in $F_\alpha$, we have
    \[
        \im f_\alpha \subset \closure \im (f_\alpha \circ \iota_\alpha) = \closure \im (\iota \circ \delta_\alpha) \quad \text{whenever $\alpha \in A$}.
    \]
    It follows that
    \begin{align*}
        F = \bigcup\{ \im f_\alpha \with \alpha \in A \} 
        &\subset \bigcup\{ \closure \im (\iota \circ \delta_\alpha) \with \alpha \in A \} \\
        &\subset \closure \bigcup\{ \im (\iota \circ \delta_\alpha) \with \alpha \in A \} \\
        &= \closure \iota[D].
    \end{align*}
    To prove the postscript, we denote by $h_\alpha$ the unique continuous extension of $g_\alpha$ for $\alpha \in A$, and these $h_\alpha$ satisfy
    \[
        h_\alpha = h_\beta \circ f_{\beta \alpha} \quad \text{whenever $\alpha \leq \beta \in A$}.
    \]
    Then, the inductive limit $h$ of $h_\alpha$ exists and satisfies 
    \[
        h \circ (\iota \circ \delta_\alpha) = h \circ (f_\alpha \circ \iota_\alpha) = g_\alpha \quad \text{whenever $\alpha \in A$}.
    \]
    It follows that $h \circ \iota$ equals the inductive limit of $g_\alpha$. Finally, if $D$ is endowed with the subspace topology, then $h$ is the unique continuous extension of the inductive limit of $g_\alpha$.
\end{proof}


\begin{theorem}
    \label{theorem: density H}
    Suppose $\mu: \mathscr{D}(U, \mathbf{R}^n) \otimes (\mathscr{E}(Y, \mathbf{R}) \cap \mathbf{E}(Y)) \to \mathbf{H}(U \times Y, \mathbf{R}^n)$ is the canonical map defined by
    \[
        \mu(\theta \otimes \gamma)(x, y) = \gamma(y)\theta(x)
    \]
    for $\theta \in \mathscr{D}(U, \mathbf{R}^n)$, $\gamma \in \mathscr{E}(Y, \mathbf{R}) \cap \mathbf{E}(Y)$, and $(x, y) \in U \times Y$. Then, its image $\im \mu$ is dense in $\mathbf{H}(U \times Y, \mathbf{R}^n)$.
\end{theorem}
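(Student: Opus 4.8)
The plan is to transport the statement across the isomorphism $\mathbf{H}(U \times Y, \mathbf{R}^n) \simeq \mathscr{K}(U, \mathbf{E}(Y))^n$ furnished by \ref{remark: isomorphism H}, reducing it to a one-factor density assertion in $\mathscr{K}(U, \mathbf{E}(Y))$. Write $W = \mathscr{E}(Y, \mathbf{R}) \cap \mathbf{E}(Y)$ and let $V$ be the image of the canonical map $c: \mathscr{D}(U, \mathbf{R}) \otimes W \to \mathscr{K}(U, \mathbf{E}(Y))$ sending $\theta \otimes \gamma$ to $[x \mapsto \theta(x)\gamma]$. Under the isomorphism, which identifies $\eta \in \mathbf{H}(U \times Y, \mathbf{R}^n)$ with $x \mapsto (\eta(x, \cdot) \bullet e_1, \dotsc, \eta(x, \cdot) \bullet e_n) \in \mathbf{E}(Y)^n$, a short computation gives $\mu(\theta \otimes \gamma) \mapsto (\theta_1 \gamma, \dotsc, \theta_n \gamma)$ for $\theta = (\theta_1, \dotsc, \theta_n)$, so $\im \mu$ is carried onto the linear span of the $n$-tuples obtained by placing a single $c(\theta_\ell \otimes \gamma)$ in one coordinate and $0$ elsewhere; that span is exactly $V^n$. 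Since a finite Cartesian power of a dense linear subspace is dense, it suffices to prove that $V$ is dense in $\mathscr{K}(U, \mathbf{E}(Y))$.

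I would obtain this by chaining three density steps, each reducing the generality of one tensor factor while staying inside $\mathscr{K}(U, \mathbf{E}(Y))$. First, $\mathscr{K}_K(U) \otimes \mathbf{E}(Y)$ is dense in $\mathscr{K}_K(U, \mathbf{E}(Y))$ for every compact $K \subset U$ by \ref{lemma: K(X) tensor Z density} with $Z = \mathbf{E}(Y)$; applying \ref{lemma: inductive limit, dense subset, extension} to the inductive system $\mathscr{K}_{K_i}(U, \mathbf{E}(Y))$ along a compact exhaustion $K_i$ of $U$, with dense subspaces $\mathscr{K}_{K_i}(U) \otimes \mathbf{E}(Y)$ and recalling that $\mathscr{K}(U, \mathbf{E}(Y))$ carries the locally convex final topology of \ref{definition: space of continuous functions with compact support}, shows $\mathscr{K}(U) \otimes \mathbf{E}(Y)$ is dense in $\mathscr{K}(U, \mathbf{E}(Y))$. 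Second, $W$ is dense in $\mathbf{E}(Y)$: for $\gamma \in \mathbf{E}_s(Y)$ the mollifications $\gamma_\varepsilon = \gamma * \phi_\varepsilon - (\gamma * \phi_\varepsilon)(0)$, formed with a standard mollifier on $Y \simeq \mathbf{R}^{\dimension Y}$, lie in $\mathbf{E}_{s+\varepsilon}(Y) \cap \mathscr{E}(Y, \mathbf{R}) \subset W$, and $\der \gamma_\varepsilon = (\der \gamma) * \phi_\varepsilon$ converges uniformly to $\der \gamma$ (the latter being continuous with compact support), so $\gamma_\varepsilon \to \gamma$ in the Banach space $\mathbf{E}_{s+1}(Y)$ and hence in $\mathbf{E}(Y)$ by \ref{lemma: properties of E}\eqref{lemma: properties of E 3} and \ref{lemma: properties strict inductive limit}\eqref{lemma: properties strict inductive limit1}; mollifying the $\mathbf{E}(Y)$-factors of an element of $\mathscr{K}(U) \otimes \mathbf{E}(Y)$ in this way (all of whose $\mathscr{K}(U)$-factors are supported in a common compact $K$ and whose $\mathbf{E}(Y)$-factors lie in a common $\mathbf{E}_m(Y)$) produces approximants in $\mathscr{K}(U) \otimes W$ converging in the Banach space $\mathscr{K}_K(U, \mathbf{E}_m(Y))$, hence in $\mathscr{K}(U, \mathbf{E}(Y))$, so $\mathscr{K}(U) \otimes W$ is dense in $\mathscr{K}(U, \mathbf{E}(Y))$. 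Third, $\mathscr{D}(U, \mathbf{R})$ is dense in $\mathscr{K}(U, \mathbf{R})$ by mollification on $U$ (the mollifier chosen small enough that supports stay inside a fixed compact subset of $U$); mollifying the $\mathscr{K}(U)$-factors of an element of $\mathscr{K}(U) \otimes W$ likewise yields approximants in $V$ converging in an appropriate Banach space $\mathscr{K}_{K'}(U, \mathbf{E}_m(Y))$, hence in $\mathscr{K}(U, \mathbf{E}(Y))$. Transitivity of denseness then gives that $V$ is dense in $\mathscr{K}(U, \mathbf{E}(Y))$, completing the argument.

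The genuinely non-trivial point is the first step: passing from the local density supplied by \ref{lemma: K(X) tensor Z density} on each $\mathscr{K}_K(U, \mathbf{E}(Y))$ to density in the inductive-limit space $\mathscr{K}(U, \mathbf{E}(Y))$, which is precisely what \ref{lemma: inductive limit, dense subset, extension} is designed for. Everything else is routine bookkeeping: checking at the outset that $\mu$ does land in $\mathbf{H}(U \times Y, \mathbf{R}^n)$ (clear from \ref{definition: function space H}, since $(x, y) \mapsto \theta(x) \der \gamma(y)$ is continuous with compact support), tracking that mollification enlarges the support radius on $U$ and the parameter $s$ on $Y$ only by an arbitrarily small amount (harmless once one works inside a fixed Banach building block of the inductive limit), and estimating the tensor-sum norms using \ref{remark: basic estimate E_s} together with the fact that the norm of $\mathscr{K}_K(U, \mathbf{E}_m(Y))$ is the supremum of the $\mathbf{E}_m(Y)$-norm over $U$.
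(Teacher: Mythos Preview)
Your proposal is correct and follows essentially the same strategy as the paper: reduce via the isomorphism $\mathbf{H}(U \times Y, \mathbf{R}^n) \simeq \mathscr{K}(U, \mathbf{E}(Y))^n$, invoke \ref{lemma: K(X) tensor Z density} for tensor density, use mollification to replace $\mathscr{K}$-factors by $\mathscr{D}$-factors and $\mathbf{E}$-factors by $\mathscr{E} \cap \mathbf{E}$-factors, and use \ref{lemma: inductive limit, dense subset, extension} to pass to the inductive limit. The only organisational difference is that the paper works entirely at the Banach level $\mathscr{K}_K(U, \mathbf{E}_s(Y))^n \simeq \mathbf{H}_{K \times \mathbf{B}(0,s)}(U \times Y, \mathbf{R}^n)$, performing all three density reductions there and applying \ref{lemma: inductive limit, dense subset, extension} once at the end along the diagonal system $F_i = \mathbf{H}_{K_i \times \mathbf{B}(0,i)}$, whereas you apply \ref{lemma: inductive limit, dense subset, extension} earlier (along the system $\mathscr{K}_{K_i}(U, \mathbf{E}(Y))$, varying only $K$) and then return to Banach building blocks for the mollification steps anyway; the paper's ordering is slightly more economical but the content is the same.
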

\begin{proof}
    Let $K$ be a compact subset of $U$ and $0 \leq s < \infty$. By \ref{lemma: K(X) tensor Z density}, the image of the canonical monomorphism
    \[
        \mathscr{K}_K(U) \otimes \mathbf{E}_s(Y) \to \mathscr{K}_K(U, \mathbf{E}_s(Y))
    \]
    is dense; therefore, the image of its Cartesian product
    \[
        (\mathscr{K}_K(U) \otimes \mathbf{E}_s(Y))^n \to \mathscr{K}_K(U, \mathbf{E}_s(Y))^n
    \]
    is also dense. On the other hand, observe that 
    \[
        \mathscr{K}_K(U, \mathbf{R}^n) \otimes \mathbf{E}_s(Y) \simeq (\mathscr{K}_K(U) \otimes \mathbf{E}_s(Y))^n,
    \]
    and by \ref{lemma: properties of H}\eqref{lemma: properties of H 1} we have $\mathscr{K}_K(U, \mathbf{E}_s(Y))^n \simeq \mathbf{H}_{K \times \mathbf{B}(0, s)}(U \times Y, \mathbf{R}^n)$ as Banach spaces. Thus, the composition
    \[
        \mathscr{K}_K(U, \mathbf{R}^n) \otimes \mathbf{E}_s(Y) \to \mathbf{H}_{K \times \mathbf{B}(0, s)}(U \times Y, \mathbf{R}^n)
    \]
    of the previous maps is a monomorphism whose image is dense. By convolution, we see $\mathscr{D}_K(U, \mathbf{R}^n)$ and $\mathscr{E}(Y, \mathbf{R}) \cap \mathbf{E}_s(Y)$ are dense in $\mathscr{K}_K(U, \mathbf{R}^n)$ and $\mathbf{E}_s(Y)$, respectively. Therefore, we may identify $\mathscr{D}_K(U, \mathbf{R}^n) \otimes (\mathscr{E}(Y, \mathbf{R}) \cap \mathbf{E}_s(Y))$ as a dense subspace of $\mathbf{H}_{K \times \mathbf{B}(0, s)}(U \times Y, \mathbf{R}^n)$.

    Let $K_i$ be a sequence of compact subset of $U$ such that $U = \bigcup_{i = 1}^\infty K_i$ and $K_i \subset \interior K_{i + 1}$ whenever $i \in \mathscr{P}$. Applying \ref{lemma: inductive limit, dense subset, extension} with 
    \[
        A = \mathscr{P}, \quad D_i = \mathscr{D}_{K_i}(U, \mathbf{R}^n) \otimes (\mathscr{E}(Y, \mathbf{R}) \cap \mathbf{E}_i(Y)), \quad F_i = \mathbf{H}_{K_i \times \mathbf{B}(0, i)}(U \times Y, \mathbf{R}^n),
    \]
    the assertion follows.
\end{proof}


\bibliographystyle{alpha}
\bibliography{ref}

\medskip \noindent \textsc{Affiliation}

\noindent
Department of Mathematics \\
National Taiwan Normal University \\
No.88, Sec.4, Tingzhou Rd. \\
Wenshan Dist., \textsc{Taipei City 116059 \\
	Taiwan (R.O.C.)}

\medskip \noindent \textsc{Email address}

\medskip \noindent
\href{mailto:hsinchuangchou@gmail.com}{hsinchuangchou@gmail.com}

\end{document}